\newcommand{\RR}{{\mathbb R}} 
\newcommand{\ZZ}{{\mathbb Z}}
\newcommand{\R}{\mathbb{R}}
\renewcommand{\phi}{\varphi}
\newtheorem{theorem}{Theorem}[section]
\newtheorem*{theorem*}{Theorem}
\newtheorem{hwtheorem}{}
\newcommand{\sethwtheoremtag}[1]{%
	\renewcommand{\thehwtheorem}{#1}
}
\newtheorem{question}{Question}
\newtheorem{corollary}{Corollary}[section]
\newtheorem{lemma}{Lemma}[section]
\newtheorem{prop}{Proposition}[section]
\theoremstyle{definition}
\newtheorem{definition}{Definition}[section]
\newtheorem{example}{Example}[section]
\theoremstyle{remark}
\newtheorem{remark}{Remark}[section]
\newtheorem{problem}{Problem}[section]
\theoremstyle{plain} 
\begin{document}
	
	\title{A characterization of Gabor Riesz bases with separable time-frequency shifts 
	}
	
	
	\author{Christina Frederick \and Azita Mayeli}
	
	\date{Last edit: \currenttime, \today \\
	}
	\maketitle

	\begin{abstract} A Gabor system generated by a window function $g\in L^2(\RR^d)$ and a  separable set $\Lambda\times \Gamma \subset \RR^{2d}$ is the collection of time-frequency shifts of $g$ given by  $\mathcal G(g, \Lambda\times \Gamma) = \left\{ e^{2\pi i \xi\cdot t}g(t-x)\right\}_{ (x,\xi)\in \Lambda\times \Gamma }$.
		One of the fundamental problems in Gabor analysis is to characterize all windows and time-frequency sets that generate a Gabor frame or Gabor orthonormal basis. The case of Gabor orthonormal bases generated by characteristic functions $g=\chi_\Omega$ has been solved by Han and Wang \cite{han2001lattice,LM19}. In this paper, we build on these results and obtain a full characterization of Riesz Gabor systems of the form $\mathcal G(\chi_\Omega, \Lambda\times \Gamma)$ when $\Omega$ is a tiling of $\RR^d$ with respect to $\Lambda$.  Furthermore, for a certain class of lattices    $\Lambda\times \Gamma$, we prove that 
		a necessary condition for the characteristic function of a  multi-tiling set to serve as a window function for a Riesz Gabor basis is that the set must be a tiling set. To prove this, we develop new results on the zeros of the Zak transform and connect these results to Gabor frames.

	\end{abstract}
	
	{\bf Keywords:} Gabor Riesz bases,  multi-tiling sets,   Zak transforms.

	\tableofcontents

	\section{Introduction}
	We will assume that the sets $\Lambda$ and $\Gamma$ are countable subsets of $\Bbb R^d$ and define the {\it Gabor system} $\mathcal G(g, \Lambda\times \Gamma)$ with respect to the window function $g\in L^2(\RR^d)$ and the set $\Lambda\times \Gamma$  to be  the  collection of functions
	\begin{align}\label{GS}
		\mathcal G(g, \Lambda\times \Gamma) = \left\{e^{2\pi i \xi\cdot t} g(t-x) : \ (x, \xi)\in \Lambda \times \Gamma\right\}. 
	\end{align} 
	This system is also known as a {\it Weyl-Heisenberg system}. The set $\Lambda\times \Gamma$ is called the set of {\it time-frequency shifts} or the {\it Gabor spectrum}. If the Gabor system is a frame, Riesz basis or orthogonal basis, we call it a {\it Gabor frame}, {\it Gabor Riesz basis}, or {\it Gabor orthogonal basis}, respectively, for $L^2(\RR^d)$.
	
	Gabor systems, first introduced by Gabor in 1946 \cite{gabor1946theory}, now play a central role in applied and computational harmonic analysis,  engineering (signal and image processing, communication theory),  and physics (the phase-space and coherent space representation). The Gabor analysis permits a time-frequency representation of a signal $f(t)$ in the time domain, hence making it possible to view the frequency spectrum locally in time. For more on the history of the Gabor bases, we refer to \cite{grochenig2001foundations}.\\

	{\bf Motivation:} This study is inspired by the well-known characterization of Gabor orthogonal bases  of the form $\mathcal G(\chi_{\Omega}, \Lambda\times \Gamma)$ by Han and Wang in \cite{han2001lattice}, where $\chi_\Omega$ is the characteristic function of a measurable set $\Omega \subset \Bbb R^d$.  

	\sethwtheoremtag{Han-Wang's Theorem}
	\begin{hwtheorem}\label{Han-theorem}
		The system $\mathcal G(\chi_{\Omega}, \Lambda\times \Gamma$) 
		is a Gabor orthogonal basis for $L^2(\Bbb R^d)$ if and only if the following two conditions hold:
		\begin{enumerate}
			\item $\Omega$ tiles $\Bbb R^d$ by $\Lambda$, i.e.,  $ \sum_{\lambda\in\Lambda} \chi_\Omega(x-\lambda) = 1$, for almost every $ x\in\Bbb R^d$, and,  
			\item The family of exponentials  $\mathcal E(\Gamma)= \{e^{2\pi i x\cdot \gamma}: \ \gamma\in \Gamma\}$ is an orthogonal basis for $L^2(\Omega).$ \end{enumerate}  
	\end{hwtheorem}

	We aim to generalize these results to Gabor Riesz bases and Gabor frames for two principal reasons. First, orthogonality is a strong condition that is often not satisfied in practical applications. Moreover, the orthogonality of $\mathcal{G}(\chi_{\Omega}, \Lambda\times \Gamma)$ is unstable with respect to small perturbations either in translation or modulation.  This motivates the need for the analysis of Gabor frames and Gabor Riesz bases, in which orthogonality is relaxed and stability is improved. 
	\newpage
	One question that we ask ourselves here is:    What are sufficient conditions on $\Omega$, $\Lambda$, and $\Gamma$ that produce a Gabor Riesz basis?
	A natural approach is to modify the two conditions in  \ref{Han-theorem}, and assume that:
	\begin{enumerate}\label{densfail1}
		\item $\Omega$ {\it multi-tiles} $\RR^d$ by $\Lambda$ at level $k$, i.e., 
		$\sum_{\lambda\in\Lambda} \chi_\Omega(x-\lambda) = k$ for almost every $x\in\Bbb R^d$, and, 
		\item  $\mathcal E(\Gamma)$ is a Riesz basis for $L^2(\Omega)$.
	\end{enumerate}
	
	However, these modifications are not sufficient when $k>1$. This is demonstrated in the next two examples that fail the necessary condition, given in \cite{ramanathan1995incompleteness}, that:
	\[ \text{dens}(\Lambda\times\Gamma)=1\text{ when }\mathcal G(g, \Lambda\times \Gamma) \text{ is a Gabor Riesz basis.} \] Here, dens$(\Lambda\times \Gamma)$ denotes the Beurling density of a set $S$.
	\begin{example}\label{Ex:1}  If $\Omega$ multi-tiles $\RR^d$ by a lattice $\Lambda$ at level $k>1$, it is known that there exist distinct vectors $\{a_i\}_{i=1}^k$ in $\RR^d$ such $\mathcal E(\Gamma)$ is a Riesz basis for $L^2(\Omega)$, where $\Gamma = \cup_{i=1}^k (\Lambda^{\perp} + a_i)$ \cite{grepstad2014multi, 
			Kolountzakis2013}. However, dens$(\Lambda\times \Gamma)=k>1$.
	\end{example}

	\begin{example}\label{densfail2}Let $\Omega=[0,1]$,  $\Lambda=2^{-1}\Bbb Z$ and $\Gamma = 2\Bbb Z \cup 2\Bbb Z+a$ for $0<a<1$. Then, $\mathcal{E}(\Gamma)$ is Riesz basis for $L^2(\Omega)$, however, dens$(\Lambda\times \Gamma)\neq 1$.  \end{example}

	The goal of this paper is to obtain an analog of  \ref{Han-theorem}
	for Gabor Riesz bases  $\mathcal G(\chi_{\Omega}, \Lambda\times \Gamma)$.  More precisely, we address the following question:\\

	\begin{question}\label{questiona} {\it  Given a Gabor Riesz basis of the form $\mathcal G(\chi_{\Omega}, \Lambda\times \Gamma)$, what can be inferred about the relationship between the spectral and the geometry of $\Omega$, e.g.: 
			Is $\mathcal G(\chi_{\Omega}, \Lambda\times \Gamma)$ a Gabor Riesz basis if and only if $\mathcal{E}(\Gamma)$ is a Riesz basis for $L^2(\Omega)$ and $\Omega$ is a tiling of $\RR^d$ by $\Lambda$?}
	\end{question} 
	In this paper, we answer   {\autoref{questiona}} for special cases of $\Omega$ (tilings and multi-tilings).  
	Our main results are collected    in Theorem \ref{Tiling-Riesz-Riesz},  Theorem \ref{multi-tiling-and-completness} and Theorem \ref{main-corollary}, as we shall explain in the following section. Some stability results are also obtained as an outcome of Theorem \ref{Tiling-Riesz-Riesz}.

	Establishing a complete characterization (even in  lattice case) remains a challenge, and we end with a discussion of open problems generated by this work. 
	\subsection{Main results}

	\begin{theorem}\label{Tiling-frame-frame}
		Let $\Omega\subset\Bbb R^d$ be a measurable set, and  let $\Gamma$ and $\Lambda$ be  countable subsets of $\RR^d$.    
		Assume that $(\Omega, \Lambda)$ is a multi-tiling pair. Then,
		
		\begin{enumerate}[(i)] 
			\item \label{frame-frame.i} If $(\Omega, \Gamma)$ is a  frame   spectral pair, then $\mathcal G(\chi_{\Omega}, \Lambda\times \Gamma)$  is a frame for $L^2(\Bbb R^d)$.
			\item\label{frame-frame.ii} The converse of \eqref{frame-frame.i} holds when $(\Omega, \Lambda)$ is a tiling pair.
		\end{enumerate} 
	\end{theorem}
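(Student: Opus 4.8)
The plan is to exploit the fact that, under the multi-tiling hypothesis on $(\Omega,\Lambda)$, the analysis operator of $\mathcal{G}(\chi_\Omega,\Lambda\times\Gamma)$ decouples over the translation index $\lambda\in\Lambda$ into copies of the analysis operator of $\mathcal E(\Gamma)$ acting on $L^2(\Omega)$. Writing $T_xg(t)=g(t-x)$, $M_\xi g(t)=e^{2\pi i\xi\cdot t}g(t)$, and $e_\gamma(s)=e^{2\pi i\gamma\cdot s}$, the first step is the elementary identity obtained from the change of variables $t=s+\lambda$: for $f\in L^2(\RR^d)$, $\lambda\in\Lambda$, $\gamma\in\Gamma$,
\[
\inner{f,\,M_\gamma T_\lambda\chi_\Omega}=\int_{\Omega+\lambda}f(t)e^{-2\pi i\gamma\cdot t}\,dt=e^{-2\pi i\gamma\cdot\lambda}\,\inner{F_\lambda,\,e_\gamma}_{L^2(\Omega)},\qquad F_\lambda:=(T_{-\lambda}f)\big|_{\Omega}\in L^2(\Omega).
\]
Hence $\abs{\inner{f,M_\gamma T_\lambda\chi_\Omega}}^2=\abs{\inner{F_\lambda,e_\gamma}_{L^2(\Omega)}}^2$, a quantity that depends on $f$ only through its restriction to $\Omega+\lambda$, with $\norm{F_\lambda}^2_{L^2(\Omega)}=\int_{\Omega+\lambda}\abs{f}^2$.

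\textbf{Part (i).} Fix frame bounds $A\le B$ for $\mathcal E(\Gamma)$ on $L^2(\Omega)$. I would apply the frame inequality to each $F_\lambda$ and use the identity above, then sum over $\lambda\in\Lambda$ — legitimate since every term is nonnegative (Tonelli) — and substitute the multi-tiling computation
\[
\sum_{\lambda\in\Lambda}\norm{F_\lambda}^2_{L^2(\Omega)}=\int_{\RR^d}\abs{f(t)}^2\Big(\sum_{\lambda\in\Lambda}\chi_\Omega(t-\lambda)\Big)\,dt=k\,\norm{f}^2_{L^2(\RR^d)}.
\]
This yields $kA\,\norm{f}^2\le\sum_{(x,\xi)\in\Lambda\times\Gamma}\abs{\inner{f,M_\xi T_x\chi_\Omega}}^2\le kB\,\norm{f}^2$, so $\mathcal{G}(\chi_\Omega,\Lambda\times\Gamma)$ is a frame for $L^2(\RR^d)$ with bounds $kA$ and $kB$.

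\textbf{Part (ii).} Here I would run the identity in reverse. Assume $k=1$ and that $\mathcal{G}(\chi_\Omega,\Lambda\times\Gamma)$ is a frame with bounds $A,B$. Fix $\lambda_0\in\Lambda$; given an arbitrary $g\in L^2(\Omega)$, take $f\in L^2(\RR^d)$ supported on $\Omega+\lambda_0$ with $f(\cdot+\lambda_0)\big|_\Omega=g$, so that $\norm{f}_{L^2(\RR^d)}=\norm{g}_{L^2(\Omega)}$. The tiling hypothesis is precisely the assertion that the translates $\{\Omega+\lambda\}_{\lambda\in\Lambda}$ are pairwise essentially disjoint, so the displayed identity kills every term with $\lambda\neq\lambda_0$ and leaves $\sum_{(x,\xi)}\abs{\inner{f,M_\xi T_x\chi_\Omega}}^2=\sum_{\gamma\in\Gamma}\abs{\inner{g,e_\gamma}_{L^2(\Omega)}}^2$. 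The Gabor frame inequality applied to this $f$ then reads $A\,\norm{g}^2_{L^2(\Omega)}\le\sum_{\gamma\in\Gamma}\abs{\inner{g,e_\gamma}_{L^2(\Omega)}}^2\le B\,\norm{g}^2_{L^2(\Omega)}$; since $g$ is arbitrary, $\mathcal E(\Gamma)$ is a frame for $L^2(\Omega)$, i.e.\ $(\Omega,\Gamma)$ is a frame spectral pair.

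I do not anticipate a serious obstacle: the argument is an exact decoupling, and the only points needing care are the interchange of sum and integral in (i) — automatic by nonnegativity — and, in (ii), the observation that it is the \emph{tiling} condition ($k=1$), not merely multi-tiling, that makes the supports $\Omega+\lambda$ disjoint so that a single translation index survives when $f$ is supported on one cell. It is worth recording as a byproduct that the construction gives the Gabor frame bounds as exactly $k$ times those of $\mathcal E(\Gamma)$, coinciding with them when $k=1$; this quantitative link is the mechanism behind the stability consequences mentioned in the introduction.
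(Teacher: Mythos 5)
Your proposal is correct and follows essentially the same route as the paper: decompose the Gabor coefficients over the translation index, reduce each block to the exponential frame inequality on $L^2(\Omega)$ (your change-of-variables identity is the explicit form of the paper's use of the frame spectral pair $(\Omega+\lambda,\Gamma)$), and sum using the multi-tiling identity $\sum_{\lambda\in\Lambda}\|f\|^2_{L^2(\Omega+\lambda)}=k\|f\|^2$, which is exactly the paper's Lemma \ref{sum-on-pieces}; for the converse you likewise test the Gabor frame inequality on functions supported in a single tile $\Omega+\lambda_0$, as the paper does.
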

	
	Notice that   
	in the present of conditions (i) and (ii) in the previous theorem, one can explicitly determine the dual Gabor frame. Indeed, assume $\{g_\gamma\}_{\gamma\in \Gamma}$ is  a dual frame  for the frame $\{e_\gamma\}_{\gamma\in \Gamma}$ in $L^2(\Omega)$. Then the system $\{g_\gamma(\cdot-\lambda)\}_{(\lambda, \gamma)}$ forms a dual frame for the Gabor frame $\mathcal G(\Omega, \Lambda\times \Gamma)$. \\

	The following theorem establishes the equivalence between the Riesz spectral pair $(\Omega, \Gamma)$ and the Gabor Riesz basis $\mathcal G(\chi_{\Omega}, \Lambda\times \Gamma)$, provided that $\Omega$ is a tiling set with respect to $\Lambda$.
	\begin{theorem}\label{Tiling-Riesz-Riesz} Let $\Omega\subset\Bbb R^d$ be a measurable set with positive and finite measure, and  let $\Gamma$ and $\Lambda$ be countable subsets of $\RR^d$.    
		If $(\Omega, \Lambda)$ is a  tiling pair, then the following are equivalent:
		\begin{enumerate}[(i)]
			\item \label{thm1.1.i} $(\Omega, \Gamma)$ is a  Riesz   spectral pair.
			\item \label{thm1.1.ii} $\mathcal G(\chi_{\Omega}, \Lambda\times \Gamma)$ is a Riesz basis for $L^2(\Bbb R^d)$.
		\end{enumerate} 
		Moreover, if $e_\lambda(\gamma)=1$ for all $\lambda, \gamma$,  then the dual Riesz basis can be expressed explicitly as the collection of all dual Riesz bases on each $\Omega+\lambda$, $\lambda\in \Lambda$.
	\end{theorem}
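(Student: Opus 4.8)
The plan is to reduce the statement about the Gabor system in $L^2(\RR^d)$ to the statement about the exponential system $\mathcal E(\Gamma)$ in $L^2(\Omega)$ by exploiting the tiling hypothesis to decompose $L^2(\RR^d)$ into translates. Since $(\Omega,\Lambda)$ is a tiling pair, we have the orthogonal-type direct sum decomposition $\RR^d = \bigsqcup_{\lambda\in\Lambda}(\Omega+\lambda)$ up to measure zero, which gives an isometric identification $L^2(\RR^d)\cong \ell^2(\Lambda)\otimes L^2(\Omega)$, or more concretely, every $f\in L^2(\RR^d)$ is uniquely written as $f=\sum_{\lambda\in\Lambda} f_\lambda$ with $f_\lambda$ supported on $\Omega+\lambda$, and $\norm{f}^2=\sum_\lambda \norm{f_\lambda}^2$. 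The first step is to write down precisely how a Gabor element $e^{2\pi i\xi\cdot t}\chi_\Omega(t-x)$ with $(x,\xi)\in\Lambda\times\Gamma$ acts in this decomposition: since $\chi_\Omega(t-x)$ is supported on $\Omega+x$ (a single tile when $x\in\Lambda$), the inner product $\inner{f, e^{2\pi i\xi\cdot t}\chi_\Omega(\cdot-x)}$ only sees the component $f_x$ and equals $\inner{f_x(\cdot+x), e^{2\pi i\xi\cdot t}\chi_\Omega}_{L^2(\Omega)}$ up to a unimodular phase factor $e^{2\pi i\xi\cdot x}$ (which, under the hypothesis $e_\lambda(\gamma)=1$, is trivial).

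The second step is the main computation: the lower and upper Riesz bounds for $\mathcal G(\chi_\Omega,\Lambda\times\Gamma)$ are, by this orthogonal splitting, \emph{exactly} the Riesz bounds for $\mathcal E(\Gamma)$ in $L^2(\Omega)$, because the analysis operator of the Gabor system is block-diagonal over $\lambda\in\Lambda$ with each block a phase-modulated copy of the analysis operator of $\mathcal E(\Gamma)$. Concretely, for a finite sequence $c=(c_{\lambda,\gamma})$,
\begin{align*}
\norm[\Big]{\sum_{\lambda,\gamma} c_{\lambda,\gamma}\, e^{2\pi i\gamma\cdot t}\chi_\Omega(t-\lambda)}_{L^2(\RR^d)}^2
= \sum_{\lambda\in\Lambda}\norm[\Big]{\sum_{\gamma\in\Gamma} c_{\lambda,\gamma}\, e^{2\pi i\gamma\cdot t}\chi_\Omega(t)}_{L^2(\Omega)}^2,
\end{align*}
using that the supports $\Omega+\lambda$ are essentially disjoint and that modulation $e^{2\pi i\gamma\cdot t}$ commutes with translation by $\lambda$ up to the scalar $e^{2\pi i\gamma\cdot\lambda}=e_\lambda(\gamma)$. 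From this identity, the Riesz-sequence inequalities $A\sum|c_{\lambda,\gamma}|^2\le \|\cdot\|^2\le B\sum|c_{\lambda,\gamma}|^2$ for the Gabor system hold with constants $A,B$ \emph{iff} the same inequalities hold for $\mathcal E(\Gamma)$ on $L^2(\Omega)$, proving the equivalence of the Riesz-basis properties once we also check completeness. Completeness transfers the same way: the closed span of the Gabor system is $\bigoplus_\lambda \overline{\operatorname{span}}\,\mathcal E(\Gamma)$ inside $\bigoplus_\lambda L^2(\Omega+\lambda)=L^2(\RR^d)$, which is all of $L^2(\RR^d)$ iff $\mathcal E(\Gamma)$ is complete in $L^2(\Omega)$. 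For the "moreover" part, the dual Riesz basis is obtained simply by taking, on each tile $\Omega+\lambda$, the translate by $\lambda$ of the dual exponential Riesz basis $\{h_\gamma\}_{\gamma\in\Gamma}$ in $L^2(\Omega)$; the biorthogonality relations and the reconstruction formula then follow tile-by-tile from the corresponding relations in $L^2(\Omega)$.

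The main obstacle — and the only place care is genuinely needed — is bookkeeping the modulation–translation phase factors $e^{2\pi i\gamma\cdot\lambda}$ and confirming that they do not couple different tiles or spoil the block-diagonal structure in the general case (when $e_\lambda(\gamma)\ne 1$); here one observes that within a single tile the phase is a global unimodular constant that can be absorbed into $c_{\lambda,\gamma}$, so the norm identity above is unaffected, and only the explicit \emph{form} of the dual (not its existence) uses $e_\lambda(\gamma)=1$. A secondary technical point is ensuring the finite-to-general density argument for Riesz bounds is valid, i.e. passing from finite sequences to all of $\ell^2(\Lambda\times\Gamma)$, which is routine given the two-sided bounds. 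Everything else is a direct unwinding of definitions through the tiling decomposition.
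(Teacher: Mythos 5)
Your proposal is correct and follows essentially the same route as the paper: split the norm of a finite linear combination over the essentially disjoint tiles $\Omega+\lambda$, absorb the unimodular factors $e^{2\pi i\gamma\cdot\lambda}$ into the coefficients to transfer the Riesz-sequence bounds in both directions, handle completeness tile-by-tile (the paper routes this through its frame result, Theorem \ref{Tiling-frame-frame}, rather than your direct orthogonal-sum observation, but the mechanism is the same), and construct the dual basis as translated dual exponentials on each tile under $e_\lambda(\gamma)=1$. One small wording correction: on a fixed tile the phase $e^{2\pi i\gamma\cdot\lambda}$ is not a single global constant (it depends on $\gamma$), but since it has modulus one it can still be absorbed into $c_{\lambda,\gamma}$ without changing $\sum_{\lambda,\gamma}|c_{\lambda,\gamma}|^2$, so your argument is unaffected.
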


	\begin{remark} The conclusion  of Theorem \ref{Tiling-Riesz-Riesz}  also holds when the Riesz condition is replaced by  completeness or frame condition.  
	\end{remark}

	If the tiling condition does not hold in Theorem \ref{Tiling-Riesz-Riesz} and  $\mathcal G(\chi_{\Omega}, \Lambda\times \Gamma)$ is a Gabor Riesz basis, then $\mathcal E(\Gamma)$ is a Bessel sequence for $L^2(\Omega)$ with the unified upper Riesz bound for the Gabor frame $\mathcal G(\Omega, \Lambda\times \Gamma)$. By the completeness of the Gabor system,  $\Omega+\Lambda$ covers $\Bbb R^d$.

	The condition that $\Omega$ tiles $\RR^d$ by $\Lambda$ is a {sufficient} condition in Theorem \ref{Tiling-Riesz-Riesz}. The next result states that the tiling condition is also a  {necessary} condition  
	for  $\mathcal{G}(\Omega, \Lambda\times \Gamma)$ to be a Gabor frame when $\Lambda$
	and $\Gamma$ satisfy additional lattice assumptions.

	\begin{theorem}\label{multi-tiling-and-completness}  Let $M$ and $N$ be full rank matrices  such that $\det(MN)=1$ and $N^TM$ is an integer matrix.
		Assume that  $\Omega$ multi-tiles $\RR^d$ by  $M(\ZZ^d)$. 
		If the system $\mathcal G(\chi_{\Omega}, M(\ZZ^d)\times N(\ZZ^d))$ is  a Gabor frame, then 1) $\Omega$  tiles $\Bbb R^d$ by $M(\ZZ^d)$, and 2) $\mathcal E(N(\ZZ^d))$ is an orthogonal basis for $L^2(\Omega)$. 
	\end{theorem}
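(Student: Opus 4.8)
The plan is to combine the Zak transform machinery for lattices with the multi-tiling hypothesis and a dimension/density argument. Since $\det(MN)=1$ and $N^TM$ is an integer matrix, the lattice $M(\ZZ^d)\times N(\ZZ^d)$ is of "critical density" for $L^2(\RR^d)$, so by the Ron--Shen duality (or, equivalently, by the fact that a Gabor frame at critical density is automatically a Riesz basis) the hypothesis that $\mathcal G(\chi_\Omega, M(\ZZ^d)\times N(\ZZ^d))$ is a frame upgrades to: it is a \emph{Gabor Riesz basis}. The first step is therefore to record this upgrade and to pass, via a metaplectic/coordinate change $t\mapsto M t$, to the normalized situation where the time lattice is $\ZZ^d$ and the frequency lattice is $(N^TM)^{-T}\ZZ^d \supseteq \ZZ^d$ — the inclusion being exactly the integrality of $N^TM$. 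In these coordinates the window becomes $\chi_{\Omega'}$ with $\Omega' = M^{-1}\Omega$, which multi-tiles $\RR^d$ by $\ZZ^d$ at some level $k\ge 1$, and $|\Omega'| = k$.

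Next I would invoke the Zak transform $Zf(x,\omega)=\sum_{n\in\ZZ^d} f(x+n)e^{-2\pi i n\cdot\omega}$, which is a unitary map $L^2(\RR^d)\to L^2([0,1)^{2d})$ intertwining integer time-frequency shifts with multiplication operators. For the window $g=\chi_{\Omega'}$, $Zg(x,\omega)=\sum_{n}\chi_{\Omega'}(x+n)e^{-2\pi i n\cdot\omega}$; by the multi-tiling hypothesis, for a.e.\ $x$ there are exactly $k$ integers $n_1(x),\dots,n_k(x)$ with $x+n_j\in\Omega'$, so $Zg(x,\omega)$ is a trigonometric polynomial with $k$ unimodular terms. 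The standard fact (Han--Wang, and the refinements in \cite{LM19}) is that $\mathcal G(g,\ZZ^d\times\ZZ^d)$ is an orthonormal basis iff $|Zg|=1$ a.e., which happens iff $k=1$. The heart of the argument is to show that having the \emph{finer} frequency lattice $(N^TM)^{-T}\ZZ^d$ available cannot rescue the situation when $k\ge 2$: a Gabor Riesz basis over a lattice $\ZZ^d\times B\ZZ^d$ with $B^{-1}=N^TM$ integral and $|\det B|=1/|\det(N^TM)|$ forces $|\Omega'|=|\det B|\cdot(\text{index})$ bookkeeping to be consistent only when the Zak transform of $g$, restricted to the appropriate quasi-periodization classes dictated by $B\ZZ^d$, is nonvanishing a.e.; combined with the zero-set results on the Zak transform developed earlier in the paper, a trigonometric polynomial $\sum_{j=1}^k e^{-2\pi i n_j(x)\cdot\omega}$ with $k\ge 2$ distinct frequencies must vanish on a set of positive measure in $\omega$ for a positive-measure set of $x$, contradicting the Riesz (lower frame) bound. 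This forces $k=1$, i.e.\ $\Omega'$ tiles $\RR^d$ by $\ZZ^d$, which is conclusion 1) after undoing the coordinate change.

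Finally, once conclusion 1) is in hand, $(\Omega,M(\ZZ^d))$ is a \emph{tiling} pair, so Theorem~\ref{Tiling-Riesz-Riesz} (in its frame form, noted in the Remark) applies in reverse: $\mathcal G(\chi_\Omega,M(\ZZ^d)\times N(\ZZ^d))$ being a Gabor frame is equivalent to $(\Omega,N(\ZZ^d))$ being a frame spectral pair. But now a density/counting comparison closes the gap to orthogonality: with $|\Omega|=|\det M|$ (from the tiling) and the frequency lattice $N(\ZZ^d)$ having covolume $|\det N|=1/|\det M|=1/|\Omega|$, the exponential system $\mathcal E(N(\ZZ^d))$ sits at exactly the critical Landau density for $L^2(\Omega)$; a frame at critical density over a \emph{lattice} is necessarily a Riesz basis, and a lattice Riesz sequence of exponentials that is also complete and at critical covolume for a domain of the matching volume is an orthogonal basis (this is the classical Fuglede-type rigidity for lattice spectra). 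That yields conclusion 2).

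The main obstacle I anticipate is the middle step: ruling out the possibility that the extra frequencies in $(N^TM)^{-T}\ZZ^d\setminus\ZZ^d$ conspire with a level-$k$ Zak polynomial to still produce uniform two-sided Riesz bounds. This is precisely where the paper's new results on zeros of the Zak transform must be deployed quantitatively — one needs that the common zero set, over the relevant finite union of shifted sublattices in the frequency variable, of the family $\{Zg(x,\cdot)\}$ cannot be avoided when $k\ge 2$. Handling the case of degenerate coincidences among the $n_j(x)$ across different cosets of $B\ZZ^d$ will require care, and is the step I would write out in full detail.
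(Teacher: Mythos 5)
Your overall strategy (reduce to a Zak-transform criterion, show multi-tiling at level $k>1$ forces the Zak transform of the window to get too small, then upgrade the tiling conclusion to orthogonality of $\mathcal E(N(\ZZ^d))$) is the same as the paper's, but the decisive step is left as a gap and the sketch you give of it would fail as stated. You assert that for $k\ge 2$ the local expression $\sum_{j=1}^k e^{-2\pi i n_j(x)\cdot\omega}$ ``must vanish on a set of positive measure in $\omega$ for a positive-measure set of $x$.'' That is false: a nonzero trigonometric polynomial vanishes only on a null set. What the argument actually needs — and what constitutes the new content of the paper — is (a) that on a neighborhood of each $x$ the translates contributing to $Z_M\chi_\Omega$ are the \emph{same} finitely many lattice points (Lemma \ref{key-lemma}), so that $Z_M\chi_\Omega$ is continuous there, and (b) the topological fact that a continuous lattice Zak transform must have a zero, proved via the argument-function/quasi-periodicity contradiction (Lemma \ref{Zero-of-Zak}); continuity near that zero then makes $\mathrm{ess\,inf}|Z_M\chi_\Omega|=0$, which kills the lower frame bound by Proposition \ref{Zak-is-unitary}(ii). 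You explicitly defer exactly this step (``the step I would write out in full detail''), so as written the proposal does not prove the theorem. Moreover, the obstacle you anticipate there is a phantom: since $N^TM$ is an integer matrix with $\det(N^TM)=\det(MN)=1$, it is unimodular, so after your coordinate change $t\mapsto Mt$ the frequency lattice $M^TN(\ZZ^d)$ equals $\ZZ^d$ exactly — there are no ``extra frequencies in $(N^TM)^{-T}\ZZ^d\setminus\ZZ^d$,'' no cosets of $B\ZZ^d$ to track, and the normalized problem is precisely the classical critical case $\ZZ^d\times\ZZ^d$ (this is in effect why the paper can work directly with the lattice Zak transform $Z_M$ and its $N(\ZZ^d)$-quasi-periodicity).

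For the second conclusion, your density/rigidity argument is workable but is asserted rather than proved: from the lower frame bound one must rule out overlaps of $\Omega$ under translations by the dual lattice $N^{-T}(\ZZ^d)$ (e.g.\ by the cancellation function $\chi_{\Omega_1}-\chi_{\Omega_1+\lambda_0}$), and only then does $|\Omega|=|\det M|=\mathrm{covol}(N^{-T}\ZZ^d)$ give tiling and hence an orthogonal basis via Fuglede's lattice theorem. The paper instead gets this directly: once $\Omega$ tiles by $M(\ZZ^d)$, one has $|Z_M\chi_\Omega|=1$ a.e., so the Gabor system is an orthogonal basis by Proposition \ref{Zak-is-unitary}(iii), and Theorem \ref{Fugled-Conj} (together with the separable characterization of \cite{LM19}) yields that $\mathcal E(N(\ZZ^d))$ is an orthogonal basis for $L^2(\Omega)$. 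Your preliminary upgrade from frame to Riesz basis via critical density is correct but unnecessary for either conclusion.
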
 
	

	It is known that for $\alpha\beta=1$, any Gabor frame  with lattice spectrum of the form $\mathcal{G}(g, \alpha\Bbb Z^n\times \beta \Bbb Z^n)$ is also a Gabor Riesz basis   \cite{Hei-Wall-SIAM_Review,   grochenig2001foundations}. Our next result gives conditions for which {\it not only} is any Gabor frame $G(\chi_{\Omega}, M(\ZZ^d)\times N(\ZZ^d))$ a Gabor Riesz basis but it is, surprisingly, {\it also}  a Gabor orthogonal basis.   
	
	\begin{theorem}\label{main-corollary}
		Let $M$ and $N$ be full rank matrices  such that $\det(MN)=1$ and $N^TM$ is an integer matrix. Assume that $\Omega$ multi-tiles $\RR^d$ by $M(\ZZ^d)$. Then, the following are equivalent:
		\begin{enumerate}[(i)]
			\item  $\mathcal G(\chi_{\Omega}, M(\ZZ^d)\times N(\ZZ^d))$ is a  frame  for  $L^2(\Bbb R^d)$.
			\item
			$\mathcal G(\chi_{\Omega}, M(\ZZ^d)\times N(\ZZ^d))$ is a Riesz basis for $L^2(\Bbb R^d)$. 
			\item   $\mathcal G(\chi_{\Omega}, M(\ZZ^d)\times N(\ZZ^d))$   is an orthogonal basis for $L^2(\Bbb R^d)$. 
			\item    $\Omega$  tiles $\Bbb R^d$ by $M(\ZZ^d)$, and
			$\mathcal E(N(\ZZ^d))$ is an orthogonal basis for $L^2(\Omega)$.
		\end{enumerate}

	\end{theorem}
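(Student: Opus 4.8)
The plan is to prove the fourfold equivalence by closing the cycle
\[
\text{(i)}\ \Longrightarrow\ \text{(iv)}\ \Longrightarrow\ \text{(iii)}\ \Longrightarrow\ \text{(ii)}\ \Longrightarrow\ \text{(i)},
\]
in which only the first arrow carries any new mathematics; the remaining three are a soft Hilbert-space fact plus direct appeals to results already established in the paper.

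I would first dispose of the two elementary implications. That (ii) $\Rightarrow$ (i) is just the standard fact that every Riesz basis is a frame. For (iii) $\Rightarrow$ (ii), recall that a complete orthogonal system $\{f_n\}$ in a Hilbert space whose elements satisfy $0<c\le\|f_n\|\le C<\infty$ becomes an orthonormal basis after dividing each $f_n$ by $\|f_n\|$, hence is the image of an orthonormal basis under a bounded, boundedly invertible (diagonal) operator and is therefore a Riesz basis. In our situation every member of $\mathcal G(\chi_\Omega, M(\ZZ^d)\times N(\ZZ^d))$ is a time-frequency shift of $\chi_\Omega$, so all members have norm exactly $\lvert\Omega\rvert^{1/2}$; and since $\Omega$ multi-tiles $\RR^d$ by $M(\ZZ^d)$ at some integer level $k\ge 1$ we have $\lvert\Omega\rvert=k\,\lvert\det M\rvert\in(0,\infty)$ (note $\lvert\det M\rvert=1/\lvert\det N\rvert$ by the hypothesis $\det(MN)=1$). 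Thus the norms are uniformly bounded away from $0$ and $\infty$, and (iii) $\Rightarrow$ (ii) follows.

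The two remaining arrows are precisely the earlier results. The implication (i) $\Rightarrow$ (iv) is Theorem \ref{multi-tiling-and-completness} verbatim: its hypotheses on $M,N$ (full rank, $\det(MN)=1$, $N^TM$ an integer matrix) and on $\Omega$ (multi-tiling $\RR^d$ by $M(\ZZ^d)$) coincide with ours, and its conclusion is exactly that a Gabor frame $\mathcal G(\chi_\Omega, M(\ZZ^d)\times N(\ZZ^d))$ forces $\Omega$ to tile $\RR^d$ by $M(\ZZ^d)$ and $\mathcal E(N(\ZZ^d))$ to be an orthogonal basis for $L^2(\Omega)$. Conversely, (iv) $\Rightarrow$ (iii) is the sufficiency half of \ref{Han-theorem} applied with $\Lambda=M(\ZZ^d)$ and $\Gamma=N(\ZZ^d)$: a tiling by $\Lambda$ together with $\mathcal E(\Gamma)$ being an orthogonal basis of $L^2(\Omega)$ makes $\mathcal G(\chi_\Omega,\Lambda\times\Gamma)$ a Gabor orthogonal basis. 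Chaining the four implications closes the loop and proves the theorem.

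In this argument there is essentially no obstacle: every substantive step has been isolated in Theorem \ref{multi-tiling-and-completness}, which we are entitled to invoke. Were that result not available, the difficulty would be to exclude proper multi-tiling ($k>1$) --- for which one would have to combine the Zak-transform zero-set analysis developed in the paper with the Beurling-density constraint $\mathrm{dens}\big(M(\ZZ^d)\times N(\ZZ^d)\big)=1$ from \cite{ramanathan1995incompleteness} --- and then promote a Riesz basis of exponentials on $\Omega$ to an orthonormal one. Given Theorem \ref{multi-tiling-and-completness}, the only point in the present proof that deserves a line of care is the elementary norm computation underlying the equivalence ``orthogonal basis $\Leftrightarrow$ orthonormal basis after rescaling.''
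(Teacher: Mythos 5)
Your proof is correct and follows essentially the same route as the paper: the only substantive implication, (i) $\Rightarrow$ (iv), is delegated to Theorem \ref{multi-tiling-and-completness}, the return to an orthogonal Gabor basis uses the Han--Wang sufficiency direction (the paper cites \cite{LM19} for the same step), and the remaining arrows are the standard facts that an equal-norm orthogonal basis is a Riesz basis and a Riesz basis is a frame. Your explicit norm computation for (iii) $\Rightarrow$ (ii) is just a spelled-out version of what the paper leaves implicit.
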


	\subsection{Relevant work, comments, and remarks}

	The motivation of  our study  has grown from the results of three papers: \cite{han2001lattice},  \cite{iosevich2017gabor}
	and 
	\cite{CFAM2022}.  In their paper in \cite{han2001lattice}, Han and Wang proved that the necessary and sufficient  conditions for a Gabor systems  $\mathcal G(\Omega, \Lambda\times \Gamma)$ to be an orthogonal basis is that $
	\Omega$ tiles $\Bbb R^d$ by translations by $\Lambda$ and $\mathcal E(\Gamma)
	$ is an  orthogonal basis for $L^2(\Omega)$.    Han and Wang's result rules out the possibility that the characteristic function of a unit ball in $\Bbb R^d$ can serve as a window function for a Gabor basis with separable time-frequency shifts domain, as the unit ball neither tiles nor it admits an exponential orthogonal basis \cite{fuglede1974commuting}.  In \cite{iosevich2017gabor}, the second author and Iosevich proved that the characteristic function of the unit ball can never  serve as a window function for a Gabor orthogonal basis with respect to any time-frequency shifts domain when $d\neq 1$ (mod 4). The results for $d=1$ (mod 4) is still an open question. As the Riesz bases are the next best bases after the orthogonal bases, an immediate question that one can ask here is whether or not the unit ball can serve as generator of a Gabor Riesz basis. The answer to this question is still unknown. This is yet another  motivating fact for our research in this paper.     
	
	In \cite{LM19}, the second author and Lai have extended the Han and Wang's characterization to non-separable time-frequency shifts domain, i.e., the Gabor orthogonal bases $\mathcal G(\Omega, S)$ where $S\subset \Bbb R^{2d}$ is a lattice but not in form of $\Lambda\times \Gamma\subset \Bbb R^{2d}$. The authors have  characterized the Gabor orthogonal bases  for a large class of  lattices. The    characterization  in full generality is still open.

	In the case where $\Omega$ multi-tiles $\RR^d$, an analysis of the connections between frame spectral pairs, Riesz spectral pairs, and spectral pairs was done in \cite{CFAM2022}.  When $\Omega$ multi-tiles by a lattice (Definition \ref{mlt-sets}), it is well-known  that the set admits an exponential Riesz basis   \cite{grepstad2014multi,
		Kolountzakis2013}.
	In the latter case, the first author and Okoudjou have computed the explicit form of the dual Riesz basis 
	\cite{frederick2020}.  In this context, one might believe that if $\Omega$ is a multi-tiling (but not tiling) with $\Lambda$ and $\mathcal E(\Gamma)$ is a Riesz basis for $L^2(\Omega)$, then an analog comparable to  Han-Wang's Theorem may hold and $\mathcal G(\Omega, \Lambda\times \Gamma)$ is a Riesz basis. Unfortunately, as we observed in Example \ref{Ex:1}, the necessary density condition may fail,  thus the Gabor system $\mathcal G(\Omega, \Lambda\times \Gamma)$ is not a Riesz basis.    
	
	In the following remarks, we  comment on the relations between our   results  and the  existing results for orthogonal bases.

	\begin{remark} 
		If $\Omega$ does not tile, then  there is no countable set $\Lambda$ such that the Gabor system \eqref{GS} is an orthogonal basis \cite{han2001lattice,LM19}.    
		
		If   $\Omega$ tiles by $\Lambda$ but  $L^2(\Omega)$ does not admit  any exponential Riesz basis, in Theorem \ref{Tiling-Riesz-Riesz}  we prove that for such set  there is no  countable set    $\Gamma\subset \Bbb R^d$ such that the system  $\mathcal G(\chi_{\Omega}, \Lambda\times \Gamma)$ is  a Gabor Riesz basis for $L^2(\Bbb R^d)$. Notice that the existence of a domain with no exponential Riesz basis has been recently proved in  \cite{kozma2021set}. \end{remark}   
	
	\begin{remark}
		While the tiling and spectral property of  the set $\Omega$ are necessary for a Gabor system \eqref{GS} to be an orthogonal basis, the answer to the question of whether or not the tiling and Riesz spectral criteria are also necessary   for a   Gabor system  to be a  Riesz basis is yet unknown  for general sets $\Omega$. 
		However, with prior knowledge on the  geometry of   $\Omega$ (i.e. multi-tiling by a lattice),   in Theorem \ref{multi-tiling-and-completness} we prove  the necessity of the criteria. 
		
	\end{remark}

	\begin{remark}
		Theorem  \ref{multi-tiling-and-completness}  is an extension of Han-Wang's Theorem to Gabor frames for a special class of time-frequency shifts domains. As a result of the theorem,   
		$\Omega$ tiles by the lattice $M(\Bbb Z^d)$. Thus, by the Fuglede Conjecture, $\mathcal E( M^{-T}(\Bbb Z^d)$ is an orthogonal basis for  $L^2(\Omega)$. However, in Theorem 
		\ref{multi-tiling-and-completness}
		the 
		lattice $N (\ZZ^d)$ doesn't have to be the dual lattice of $M (\ZZ^d)$.   This result links Fuglede's conjecture to the study of Gabor bases as we have demonstrated in Theorem \ref{Fugled-Conj}. (For the statement of the Fuglede's conjecture see Sec. \ref{main-corollary} in this paper. For the history of the conjecture we refer to \cite{iosevich2017fuglede,fallon2021spectral} and the reference therein.) 
	\end{remark}
	
	\begin{remark} While the necessity of tiling of the set $\Omega$  by $\Lambda$ for a Gabor Riesz basis $\mathcal G(\Omega, \Lambda\times \Gamma)$ is not known yet, for the non-separable case the answer is in general negative. 
		For example, in
		\cite{gabardo2015gabor}, the authors   construct   a non-separable Gabor spectrum $S= \cup_{t\in J} \{t\} \times S_t$ such that $\chi_{[0,1]^2}$ serves as a Gabor window function for  an orthogonal basis, however, the set $[0,1]^2$ does not tile $\Bbb R^2$ by $J$. 
	\end{remark}

	\section{Notations and Preliminaries}\label{Notations}  
	
	This section contains basic definitions and notations  used in the proofs of the main results.

	\begin{definition} Given a measurable set $\Omega\subset \Bbb R^d$ with positive and finite measure, and a discrete and countable set $\Gamma\subset\Bbb R^d$, we say that the collection of functions $\{f_\gamma\}_{ \gamma\in \Gamma}\subset L^2(\Omega)$ constitutes a frame for $L^2(\Omega)$ if there exist constants $0<\ell \leq u<\infty$ such that for any $f\in L^2(\Omega)$ the following estimation holds:
		\begin{align}
			\ell \| f\|^2 \leq \sum_{\gamma\in \Gamma} |\langle f, f_\gamma\rangle|^2 \leq u\|f\|^2. \label{eq:framedef}
		\end{align}
		When the left-hand inequality in \eqref{eq:framedef} holds, we say that the system has a lower frame bound. When the right-hand inequality in \eqref{eq:framedef} holds, but not necessarily the left-hand inequality, we say that the system has an upper frame bound and the sequence $\{f_\gamma\}_{\gamma\in\Gamma}$ is called a Bessel sequence. 
	\end{definition}

	\begin{definition}
		We say $\{f_\gamma\}_{ \gamma\in \Gamma}\subset L^2(\Omega)$ is a Riesz basis for $L^2(\Omega)$ if  it is the  image of an orthonormal basis under an invertible linear map.  This definition is equivalent to say that the sequence  $\{f_\gamma\}_{ \gamma\in \Gamma}\subset L^2(\Omega)$ is a frame for  $L^2(\Omega)$  and for any sequence  $\{c_\gamma\}\in \ell^2(\Gamma)$ with finite man non-zero elements,  the following estimation holds: 
		\begin{align}\label{RE}\ell \sum_{\gamma\in \Gamma} |c_\gamma|^2 \leq \left\|  \sum_{\gamma\in \Gamma} c_\gamma f_\gamma  \right\|_{L^2(\Omega)}^2 \leq u \sum_{\gamma\in \Gamma} |c_\gamma|^2, \end{align}
		where $l$ and $u$ are the frame bounds in \eqref{eq:framedef}.
		The sequence $\{f_\gamma\}_{ \gamma\in \Gamma}$ is called a Riesz sequence in $L^2(\Omega)$ if it satisfies the  estimation \eqref{RE} but the sequence $\{f_\gamma\}_{\gamma\in \Gamma}$ is not complete in $L^2(\Omega)$.  
	\end{definition}
	
	\begin{definition}[Analytical pairs]
		We  say $(\Omega, \Gamma)$ is a
		{\it frame spectral pair} if    the collection of  exponential functions  $\mathcal E(\Gamma)= \{e^{2\pi i x\cdot \gamma}: \ \gamma\in \Gamma\}$   forms a frame for $L^2(\Omega)$, a 
		{\it Riesz spectral pair},  if  $\mathcal E(\Gamma)$ forms 
		a  Riesz basis for $L^2(\Omega)$, and a {\it spectral pair} if  $\mathcal E(\Gamma)$ is an orthogonal basis for $L^2(\Omega)$.  
	\end{definition}

	For the analysis of    analytical pairs  related to multi-tiling domains see the author's previous paper \cite{CFAM2022}.

	\begin{definition}[Multi-tiling sets]\label{mlt-sets} A set $\Omega\subset \Bbb R^d$ with positive and finite  measure 
		is called a {\it multi-tiling set}   with respect to a countable set    $\Lambda\subset \Bbb R^d$ at level $k\in \Bbb N$ if almost every  point $x\in \Bbb R^d$ can be covered exactly by $k$ translations of $\Omega$ by  elements of $\Lambda$. That is, for a.e. $x\in \Bbb R^d$,  there are exactly $k$ vectors   $\lambda_1(x), \cdots, \lambda_k(x)\in \Lambda$ such that $x\in \cap_{i=1}^k  \Omega+\lambda_i(x)$.  The set is a   {\it tiling} set when  $k=1$. That is, almost every point $x\in \Bbb R^d$ is covered exactly by one $\Lambda$ translation of $\Omega$
	\end{definition}
	
	\begin{definition}[Full rank lattices] A {\it full rank lattice}  $\Lambda$ in $\Bbb R^d$ is given by $M(\Bbb Z^d)$ where $M$ is an $d\times d$ non-singular   matrix.  We denote the  
		{\it volume} of the lattice by  $\text{Vol}(\Lambda)$ that is defined by  $\text{Vol}(\Lambda) =|\det(M)|$.    The density of $\Lambda$ is given by $\text{dens}(\Lambda) = |\det
		(M)|^{-1}$.  
		The {\it dual lattice} of $\Lambda$ is  defined as
		$$\Lambda^\perp:= \{ x\in \Bbb R^{d} : \  \lambda\cdot x\in \Bbb Z,  \ \forall \lambda\in \Lambda\} .$$ 
		Here, $\lambda\cdot x$ denotes  the inner product of two vectors in $\Bbb R^d$. 
		A direct calculation shows that  when $\Lambda = M(\Bbb Z^d)$,  $\Lambda^\perp=  M^{-T}(\Bbb Z^{d})$, and    $M^{-T}$ the inverse transpose   of $M$.
	\end{definition} 
	
	In this paper, we assume all lattices are full rank lattices.  
	
	\begin{definition}[Fundamental domain of a lattice] Let $\Lambda=M(\Bbb Z^d)$ be a full rank lattice in $\Bbb R^d$.  
		The fundamental domain of  $\Lambda$ is a measurable set in $\Bbb R^d$,  denote it by    $Q_M$,    which contains distinct representatives (mod $\Lambda$) in $\Bbb R^d$, so that any intersection of $Q_M$ with any coset $x+\Lambda$ has only one element. 
		
		Equivalently,  $Q_M$ is a fundamental domain of $\Lambda$ if it is a tiling set with respect to $\Lambda$.
		The 
		existence of fundamental domains is proved in Theorem 1 in  \cite{feldman1968existence}. Any translation of a fundamental domain is also a fundamental domain,  and $\text{Vol}(\Lambda)=|Q_{M}|$. 
	\end{definition} 
	
	{\it Notation:} For positive quantities $A$ and $B$,  by $A\asymp B$ we mean that there are constants $0<c_1 \leq c_2<\infty$ such that $c_1 B \leq A\leq c_2 B$. 
	
	\section{Proof of Theorem  \ref{Tiling-frame-frame} and Theorem \ref{Tiling-Riesz-Riesz}}\label{proof-1}
	
	In this section, we connect properties of Gabor systems $\mathcal{G}(\Omega, \Lambda\times \Gamma)$ with frame spectral pairs and Riesz spectral pairs in the case where $\Omega$ multi-tiles $\RR^d$ by $\Lambda$. Here, the sets $\Lambda$ and $\Gamma$ are countable sets that are not  necessarily lattices.

	\subsection{Multi-tiling sets and Gabor frames -  Proof of Theorem \ref{Tiling-frame-frame}}
	The proof of Theorem \ref{Tiling-Riesz-Riesz} (\ref{thm1.1.i}) relies on the following result that describes how multi-tiling sets can be used to express the norm of a function. 
	
	\begin{lemma}\label{sum-on-pieces} Assume that $(\Omega, \Lambda)$ is a 
		multi-tiling pair  of level $k\in \Bbb N$. Then for any $f\in L^2(\Bbb R^d)$,
		\begin{align}\label{star}
			\sum_{\lambda\in \Lambda} \|f\|_{L^2(\Omega+\lambda)}^2 = k \|f\|^2_{L^2(\RR^d)}. 
		\end{align}
		
	\end{lemma}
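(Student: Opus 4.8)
The plan is to rewrite each summand on the left-hand side of \eqref{star} as an integral over all of $\RR^d$ against the indicator of the translate $\Omega+\lambda$, and then to sum. Concretely, using the translation invariance of Lebesgue measure, for each $\lambda\in\Lambda$ we have
\[
\|f\|_{L^2(\Omega+\lambda)}^2=\int_{\RR^d}\abs{f(x)}^2\,\chi_{\Omega+\lambda}(x)\,dx=\int_{\RR^d}\abs{f(x)}^2\,\chi_{\Omega}(x-\lambda)\,dx .
\]

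Next I would sum over $\lambda\in\Lambda$ and interchange the countable sum with the integral. Since each integrand $\abs{f(x)}^2\chi_\Omega(x-\lambda)$ is nonnegative and measurable, Tonelli's theorem justifies the exchange and yields
\[
\sum_{\lambda\in\Lambda}\|f\|_{L^2(\Omega+\lambda)}^2=\int_{\RR^d}\abs{f(x)}^2\Bigl(\sum_{\lambda\in\Lambda}\chi_{\Omega}(x-\lambda)\Bigr)\,dx .
\]
By the definition of a multi-tiling pair of level $k$ (Definition \ref{mlt-sets}), the inner sum equals the constant $k$ for almost every $x\in\RR^d$. Substituting this into the last display gives
\[
\sum_{\lambda\in\Lambda}\|f\|_{L^2(\Omega+\lambda)}^2=k\int_{\RR^d}\abs{f(x)}^2\,dx=k\,\|f\|^2_{L^2(\RR^d)},
\]
which is exactly \eqref{star}.

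There is essentially no obstacle in this argument; the only step deserving a word of care is the interchange of the sum and the integral, and this is immediate because all the terms are nonnegative, so Tonelli applies with no integrability hypothesis beyond measurability. In particular the identity is valid for every $f\in L^2(\RR^d)$, and the finiteness of $\|f\|_{L^2(\RR^d)}$ automatically forces the left-hand side of \eqref{star} to be finite as well.
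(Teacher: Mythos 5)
Your proof is correct and follows essentially the same route as the paper: rewrite each term as an integral against $\chi_{\Omega+\lambda}$, interchange sum and integral, and use the multi-tiling property to evaluate the inner sum as the constant $k$ almost everywhere. Your explicit appeal to Tonelli for the nonnegative integrands is if anything a cleaner justification of the interchange than the paper's citation of Fubini.
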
 
	\begin{proof}  We rewrite the left-hand side of \eqref{star} as
		
		\begin{align}\label{eq:sum}
			\sum_{\lambda\in\Lambda} \|f\|_{L^2(\Omega+\lambda)^2} =  \sum_{\lambda\in\Lambda} \int_{\Bbb R^d} |f(x)\chi_{\Omega+\lambda}(x)|^2 dx =   \int_{\Bbb R^d} \sum_{\lambda\in\Lambda}   |f(x)\chi_{\Omega+\lambda}(x)|^2 dx . 
		\end{align} 
		The exchange of the integral and series is justified by Fubini's theorem. For a.e. $x\in \Bbb R^d$ we formally define
		$$g(x) := \sum_{\lambda\in \Lambda} |f(x)\chi_{\Omega+\lambda}(x)|^2.$$ 
		Since $\Omega$ is a multi-tiling of $\RR^d$ with respect to $\Lambda$,
		
		for a.e. $x\in \Bbb R^d$ there are lattice points  $\{\lambda_i(x)\in \Lambda\}_{1\leq i\leq k}$  such that $x\in \cap_{i=1}^k \Omega+\lambda_i(x)$. 
		Therefore, $g(x)$ is a finite sum for almost every $x\in \RR^d$ and
		$$g(x) = \sum_{i=1}^k |f(x)\chi_{\Omega+\lambda_i(x)}(x)|^2 =  \sum_{i=1}^k |f(x)|^2 = k |f(x)|^2. \label{eq:g(x)}$$ 
		Substituting this expression into \eqref{eq:sum}, we obtain
		\begin{align} 
			\sum_{\lambda\in\Lambda} \|f\|_{L^2(\Omega+\lambda)} = \int_{\Bbb R^d} g(x) dx = k \int_{\Bbb R^d}  |f(x)|^2 dx = k\|f\|^2.
		\end{align} 
		This completes the proof. 
	\end{proof} 
	
	\begin{remark} 
		With the additional assumption that $\Lambda$ is a lattice, Lemma \ref{sum-on-pieces} can also be proved by expressing $\Omega$ as the finite union $\Omega = \cup_{i=1}^k E_i \cup F$
		where $|F|=0$ and $E_i$, $1\leq i\leq k$ are fundamental domains of $\Lambda$ (see \cite{Kolountzakis2013}). Then, 
		\begin{align*} 
			\sum_{\lambda\in\Lambda} \|f\|_{L^2(\Omega+\lambda)}^2 = \sum_{\lambda\in \Lambda}  \sum_{i=1}^k \|f\|_{L^2( E_i+\lambda)}^2 = \sum_{i=1}^k\left( \sum_{\lambda\in \Lambda}  \|f\|_{L^2( E_i+\lambda)}^2 \right)  = \sum_{i=1}^k \|f\|^2 = k\|f\|^2. 
		\end{align*} 
	\end{remark}

	We are ready to prove  Theorem \ref{Tiling-frame-frame}. 
	
	\begin{proof}[Proof of Theorem \ref{Tiling-frame-frame}.]  {\it  (\ref{thm1.1.i})}     Let $(\Omega, \Lambda)$ be a multi-tiling pair and  $(\Omega,\Gamma)$ be a  frame spectral pair. We show that  $\mathcal G(\chi_{\Omega}, \Lambda\times \Gamma)$   is a frame for $L^2(\Bbb R^d)$.   
		Let $f\in L^2(\Bbb R^d)$. 
		For   $\lambda\in \Lambda$ we define $f_\lambda:=f\chi_{\Omega+\lambda}\in L^2(\Omega+\lambda)$. 
		By the frame spectral property of $(\Omega+\lambda, \Gamma)$ we have 
		$$\ell \|f\|_{L^2( \Omega+\lambda)}^2 \leq \sum_{\gamma\in\Gamma} |\langle f_\lambda   , e_\gamma\rangle |_{L^2( \Omega+\lambda)}^2 \leq u \|f\|_{L^2( \Omega+\lambda)}^2,   $$
		where  $\ell$ and $u$ denote the frame constants for the   exponential frame $\{e^{2\pi i x\cdot \gamma} \}_{\gamma \in \Gamma}\subset L^2(\Omega)$.  Summing over $\Lambda$, we have
		
		$$\ell  \sum_{\lambda\in \Lambda}  \|f\|_{L^2 (\Omega+\lambda)}^2 \leq \sum_{\lambda\in \Lambda} \sum_{\gamma\in \Gamma} |\langle f_\lambda , e_\gamma\rangle_{L^2 (\Omega+\lambda)}|^2 \leq u \sum_{\lambda\in \Lambda}  \|f \|_{L^2(\Omega+\lambda)}^2. $$
		The frame bounds are obtained using Lemma \ref{sum-on-pieces},
		$$k \ell      \|f\|^2    \leq \sum_{\lambda\in \Lambda} \sum_{\gamma\in \Gamma} |\langle f , e_\gamma\chi_{\Omega+\lambda} \rangle_{L^2 (\Bbb R^d)} |^2 \leq k u       \|f\|^2 . $$

		$(\ref{thm1.1.ii})$
		
		Assume that $\Omega$ is a tiling set  and the Gabor system  $\mathcal G(\chi_{\Omega}, \Lambda\times \Gamma)$ is a frame.  
		For a fixed $\lambda_0\in \Lambda$ and $f\in L^2(\Omega+\lambda_0)$, we have 
		\begin{align} 
			\ell \|f\|_{L^2(\Bbb R^d)}^2 \leq \sum_{(\lambda, \gamma)\in \Lambda\times \Gamma} \left|\langle f, \chi_{\Omega+\lambda} e_\gamma\rangle_{L^2(\Bbb R^d)}\right|^2 \leq u \|f\|_{L^2(\Bbb R^d)}^2,
		\end{align}
		where $\ell$ and $u$ are the lower and upper frame constants, respectively, for the Gabor frame  $\mathcal G(\chi_{\Omega}, \Lambda\times \Gamma)$. 
		Since $\Omega$ tiles $\RR^d$ by $\Lambda$, we have that $\langle f, \chi_{\Omega+\lambda} e_\gamma\rangle=0$ for all $\lambda\neq \lambda_0$, and therefore  $(\Omega+\lambda_0, \Gamma)$ is a frame spectral pair, i.e., 
		\begin{align}
			\ell \|f\|_{L^2(\Omega+\lambda_0)}^2 \leq \sum_{\gamma\in \Gamma}  \left|\langle f,  e_\gamma\rangle_{L^2(\Omega+\lambda_0)}\right|^2 \leq u \|f\|_{L^2(\Omega+\lambda_0)}^2\label{eq:lambda0}. 
		\end{align}  

		Now, for any $g\in L^2(\Omega)$ take  $f(x) := g(x-\lambda_0)\in L^2(\Omega+\lambda_0)$ and apply \eqref{eq:lambda0},
		to prove that $(\Omega, \Gamma)$ is a frame spectral pair.
	\end{proof}  
	
	The converse of Theorem \ref{Tiling-frame-frame} {does not hold in general}. That is, 
	when $\mathcal G(\chi_{\Omega}, \Lambda\times \Gamma)$ is a frame and $(\Omega,\Gamma)$ is a frame spectral pair, $\Omega$ does not necessarily tile $\Bbb R^d$ with  $\Lambda$. This is illustrated in the next example.
	
	\begin{example}Take $
		\Omega = (0,1)$, $\Lambda=2^{-1}\Bbb Z$, and $\Gamma=\Bbb Z$. The set 
		$\Omega$ multi-tiles $\Bbb R$ 
		at  $k=2$. Also,   $(\Omega, \Bbb Z)$ is a spectral pair.  By Theorem  \ref{Tiling-frame-frame} (\ref{thm1.1.i}), 
		the  system $\mathcal G(\chi_\Omega, 2^{-1}\Bbb Z \times \Bbb Z)$ is a frame,   
		and $\Omega$ does not tile $\Bbb R$ by $\Lambda$. 
	\end{example}
	
	\subsection{Tiling sets and Gabor Riesz bases - Proof of Theorem \ref{Tiling-Riesz-Riesz}}

	Assume that $(\Omega, \Gamma)$ is a Riesz spectral pair and that $(\Omega, \Lambda)$ is a multi-tiling pair. We have seen in Example \ref{densfail2} that these assumptions do not guarantee that  $\mathcal G(\chi_{\Omega}, \Lambda\times \Gamma)$ is a Gabor Riesz basis for $L^2(\Bbb R^d)$. However, by Theorem  \ref{Tiling-frame-frame} (\ref{thm1.1.i}), the system is a Gabor frame,  and, the Ron-Shen Duality Principle can be applied.


	\begin{example}
		Consider the setting from  Example \ref{densfail2}: $\Omega = (0,1), \Lambda = 2^{-1}\Bbb Z, \Gamma= \Bbb Z$.
		The system $\mathcal G(\chi_\Omega, 2^{-1}\Bbb  Z\times \Bbb Z)$ is a frame  by  Theorem  \ref{Tiling-frame-frame} (i). By the Ron-Shen Duality Principle, 
		the system $\mathcal G(\chi_\Omega, \Bbb Z\times 2\Bbb Z)$ is a Riesz sequence in $L^2(\Bbb R^d)$. From the other hand, $\mathcal G(\chi_\Omega, \Bbb Z\times 2\Bbb Z) \subset \mathcal G(\chi_\Omega, \Bbb Z\times \Bbb Z).$ This implies that 
		$\mathcal G(\chi_\Omega, \Bbb Z\times 2\Bbb Z)$ is a  Bessel sequence.  
		However, the system $\mathcal G(\chi_\Omega, \Bbb Z\times 2\Bbb Z)$  is not complete.  
	\end{example}
	
	Theorem \ref{Tiling-Riesz-Riesz} states that to establish the equivalence of Riesz spectral pairs and Gabor Riesz bases, a stronger condition is needed.

	\begin{proof}[Proof of Theorem \ref{Tiling-Riesz-Riesz}] 
		$(\ref{thm1.1.i})\Rightarrow (\ref{thm1.1.ii}):$  
		Assume that   
		$(\Omega, \Gamma)$ is a Riesz spectral pair and $(\Omega, \Lambda)$ is a tiling pair. We show that  the Gabor system  (\ref{GS}) is a Riesz basis. 
		Assume that $\{a_{\lambda, \gamma}\}_{(\lambda,\gamma)\in \Lambda\times \Gamma}$ is a  collection of scalar coefficients with finite many non-zero elements.      Then, by the tiling property of $\Omega$ by $\Lambda$, we have 
		
		\begin{align}\notag
			\left\|\sum_{(\lambda, \gamma) \in \Lambda\times  \Gamma} a_{\lambda, \gamma} e_\gamma \chi_{\Omega+\lambda}\right\|_{L^2(\Bbb R^d)}^2 
			&= \sum_{\lambda'\in \Lambda} \left\|  \sum_{ (\lambda, \gamma) \in \Lambda\times \Gamma} a_{\lambda, \gamma} e_\gamma \chi_{\Omega+\lambda  }\right\|_{L^2( \Omega+\lambda')}^2 
			\\\label{eq:RZ}
			&= \sum_{\lambda\in \Lambda} \left\|  \sum_{\gamma\in \Gamma} a_{\lambda, \gamma} e_\gamma \chi_{(\Omega+\lambda)}\right\|_{L^2( \Omega+\lambda)}^2
		\end{align}
		
		By applying the  Riesz inequalities for the inner summation  we obtain   \eqref{eq:RZ} 
		
		$$\eqref{eq:RZ}  \asymp \sum_{\lambda}     \sum_{\gamma\in \Gamma} |a_{\lambda, \gamma}|^2 $$ 
		
		The  calculations  above 
		prove that   the Gabor system in  \eqref{GS} is a Riesz sequence in  $L^2(\Bbb R^d)$. The completeness holds by Proposition  \ref{Tiling-frame-frame}. 
		Moreover, the Riesz frame constants coincide with the frame constants for the Riesz spectral pair $(\Omega, \Gamma)$.   \\

		$(\ref{thm1.1.ii})\Rightarrow (\ref{thm1.1.i}):$    Assume that  that    $\mathcal G(\chi_{\Omega}, \Lambda\times \Gamma)$ is a Riesz basis and $(\Omega, \Lambda)$ is a tiling pair. We will   prove that  $(\Omega, \Gamma)$ is a Riesz spectral pair.
		For this, fix $\lambda_0\in \Lambda$.  By Proposition  \ref{Tiling-frame-frame}, the pair $(\Omega+\lambda_0, \Gamma)$ is a frame pair. So, we only need to prove that the set of exponential functions  $\mathcal E(\Gamma)$ form a Riesz sequence in $L^2(\Omega+\lambda_0)$.  
		Let $\{a_ \gamma\}_{\gamma\in \Gamma}$  be a  sequence of  scalars where only finite many are none-zero. For any  $(\lambda,\gamma)\in \Lambda\times \Gamma$, define $b_{\lambda, \gamma} := a_{ \gamma}$ if $\lambda =\lambda_0$, and $0$ otherwise. Therefore $\{b_{\lambda,\gamma}\}_{\Lambda\times \Gamma}$ is a sequence with finitely many non-zero elements.   By the Riesz inequality for  $\mathcal G(\chi_{\Omega}, \Lambda\times \Gamma)$ with the frame constants  $0<\ell\leq u<\infty$, we then have 
		\begin{align}
			\ell \sum_{ \gamma\in \Gamma} |b_{\lambda_0,\gamma}|^2 \leq \left\|  \sum_{ \gamma\in \Gamma} b_{\lambda_0, \gamma}  \chi_{\Omega+\lambda_0} e_\gamma \right\|^2 \leq u \sum_{\gamma\in \Gamma} |b_{\lambda_0,\gamma}|^2,
		\end{align} 
		or,
		\begin{align}
			\ell \sum_{\gamma\in \Gamma} |a_{\gamma}|^2 \leq\left\|  \sum_{ \gamma\in \Gamma} a_{\gamma}    e_\gamma \right\|_{L^2(\Omega+\lambda_0)}^2 \leq u \sum_{\gamma\in \Gamma} |a_{\gamma}|^2.
		\end{align} 
		This proves that $\mathcal E(\Gamma)$ is a Riesz sequence for $L^2(\Omega+\lambda_0)$ with the same Riesz constants as the Gabor Riesz  basis. The completeness of the exponentials $\mathcal E(\Gamma)$ is due to  Proposition \ref{Tiling-frame-frame}. Thus, $(\Omega+\lambda_0, \Gamma)$ is a Riesz spectral pair.  Since the Riesz basis  property of $\mathcal E(\Gamma)$ is invariant under the translation of the domain, we conclude that $(\Omega, \Gamma)$ is also a Riesz spectral pair, and that completes the proof of the first part of the theorem.\\    
		
		Next we compute the biorthogonal basis for the Gabor Riesz basis. The biorthogonal   basis for a Gabor Riesz basis $\mathcal G(\Omega, \Lambda\times\Gamma)$ is a unique  family of functions $\{g_{\lambda, \gamma}\}_{\Lambda\times \Gamma}$  such that for any  pairs $(\lambda, \gamma)$ and  $(\lambda', \gamma')$
		$$ \langle e_\gamma \chi_{\Omega+\lambda} , g_{\lambda, \gamma}\rangle = \delta_{\lambda, \lambda'} \delta_{\gamma, \gamma'} 
		$$
		
		Let  $(\Omega, \Gamma)$ be  a Riesz spectral pair, and let $\{g_\gamma\}_{\gamma\in \Gamma}$ denote the    dual Riesz basis in $L^2(\Omega)$.
		The dual Riesz basis for the 
		Riesz spectral pair  $(\Omega+\lambda, \Gamma)$  is given 
		as 
		\begin{align}
			\{e_\gamma(\lambda)g_\gamma(\cdot - \lambda) \chi_{\Omega+\lambda}\}_{\gamma\in \Gamma}. 
		\end{align}
		
		We prove that, under the assumption   $e_\gamma(\lambda)=1$ for all $\gamma, \lambda$,  the system 
		\begin{align}
			\{g_\gamma(\cdot - \lambda) \chi_{\Omega+\lambda}\}_{\gamma\in \Gamma, \lambda\in \Lambda} 
		\end{align}
		is biorthogonal to $\mathcal G(\chi_{\Omega}, \Lambda\times \Gamma)$, thus it is the dual basis. 
		Let $(\lambda, \gamma) \neq (\lambda', \gamma')$.
		If $\lambda\neq \lambda'$, due to the disjointess of the sets  $\Omega+\lambda$ and $\Omega+\lambda'$, the functions 
		$e_\gamma  \chi_{\Omega+\lambda}$ and   $ g_{\gamma'}(\cdot - {\lambda'}) \chi_{\Omega+\lambda'}$ are orthogonal.     Now assume that $\lambda=\lambda'$, then we must  have $\gamma\neq \gamma'$. Orthogonality also holds due the following equality: 
		
		$$\langle e_\gamma, g_{\gamma'}(\cdot -\lambda)\rangle_{L^2(\Omega+\lambda)}= \langle   e_\gamma(\lambda) e_{\gamma},  g_{\gamma'}  \rangle_{L^2(\Omega)} = \langle   e_\gamma ,   g_{\gamma'}  \rangle_{L^2(\Omega)}  = \delta_{\gamma, \gamma'}.  $$
		This completes the proof. 
	\end{proof}

	
	\subsection{Stability of Gabor Riesz bases}\label{Sec.stability} In this section, we show the stability results with respect to the time-frequency shifts set $\Lambda\times \Gamma$.  
	\begin{definition}\label{permutation-sequence} Given a countable set $\Gamma\subset{\RR^d}$, we say the sequence ${\Gamma}_{p}=\{\kappa_\gamma\}_{\gamma\in \Gamma}\subset \Bbb R^d$ is a   perturbation of $\Gamma$ with constant $c>0$ if   $|\gamma-\kappa_\gamma|<c$ for all $\gamma \in \Gamma$.
	\end{definition} 
	We recall the classical Paley-Wiener theorem (see e.g.     \cite{Ole_PAMS_95}).   
	
	\begin{theorem}[Paley-Wiener Theorem]\label{PWThm} Let $\{f_i\}_{i\in I}$ be basis for the Banach space $X$, let $\{g_i\}_{i\in I}$ be a family of vectors in $X$. Assume that there exists a constant $\lambda \in [0, 1)$ such that for any sequence $\{c_i\}_{i\in I}$ with finite many non-zero elements, 
		$$ \left\| \sum_i c_i (f_i-g_i) \right\| \leq \lambda \left\| \sum_i c_i f_i \right\|.
		$$   
		Then $\{g_i\}$ is a basis for $X$.
	\end{theorem}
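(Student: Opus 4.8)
The plan is to realize the passage from $\{f_i\}$ to $\{g_i\}$ as the action of a bounded invertible operator on $X$, and then to invoke the elementary fact that such operators carry bases to bases.

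First I would work on the dense linear subspace $Y\subset X$ consisting of all finite linear combinations $\sum_i c_i f_i$. Since $\{f_i\}_{i\in I}$ is a basis, the expansion of each element of $X$ in terms of the $f_i$ is unique; applied to $0$ this says the $f_i$ are linearly independent, so there is a well-defined linear map $S\colon Y\to X$ determined by $Sf_i=f_i-g_i$. For $y=\sum_i c_i f_i\in Y$ we then have $Sy=\sum_i c_i(f_i-g_i)$, and the hypothesis of the theorem is exactly the estimate $\norm{Sy}\le\lambda\,\norm{y}$ for all $y\in Y$. Because $Y$ is dense in $X$ and $S$ is Lipschitz on $Y$, it extends uniquely to a bounded linear operator $S\colon X\to X$ with $\norm{S}\le\lambda<1$.

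Next I would set $U:=I-S$, so that $Uf_i=f_i-(f_i-g_i)=g_i$ for every $i$. Since $\norm{S}<1$, the Neumann series $\sum_{n\ge 0}S^n$ converges in operator norm and furnishes a bounded inverse of $U$; hence $U$ is a bounded bijection of $X$ with bounded inverse. To finish I would record the standard observation that a bounded isomorphism maps a basis to a basis: given $x\in X$, write $U^{-1}x=\sum_i c_i f_i$ and apply the continuous map $U$ to the partial sums to obtain $x=\sum_i c_i g_i$, convergent in $X$; uniqueness of the coefficients follows by applying $U^{-1}$ and using uniqueness of the $\{f_i\}$-expansion of $U^{-1}x$. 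Since $g_i=Uf_i$, this shows $\{g_i\}_{i\in I}$ is a basis for $X$.

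The argument is short, and the only points needing care are routine: well-definedness of $S$ on $Y$ (which is precisely linear independence of the $f_i$, itself a consequence of uniqueness of basis expansions), persistence of the bound under extension of $S$ by density, and preservation of bases under bounded isomorphisms. I do not expect a genuine obstacle; the entire content of the theorem is captured by the contraction estimate $\norm{S}\le\lambda<1$, which forces $U=I-S$ to be invertible.
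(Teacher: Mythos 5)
Your proof is correct and is exactly the classical argument: define $S$ on the dense span of $\{f_i\}$ by $Sf_i=f_i-g_i$, extend by density with $\norm{S}\le\lambda<1$, invert $I-S$ via the Neumann series, and note that the bounded isomorphism $I-S$ carries the basis $\{f_i\}$ to $\{g_i\}$. The paper offers no proof of its own—it recalls the result as the classical Paley--Wiener theorem with a reference to \cite{Ole_PAMS_95}—and your argument coincides with that standard proof, so there is nothing to add.
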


	Applying the Paley-Wiener theorem, we obtain  the following stability result in terms of Kadec's 1/4 theorem for exponential Riesz bases. 
	
	\begin{prop}[Kadec's $1/4$ Theorem for exponential Riesz bases]\label{Young-stability} Assume that $\Omega\subset \Bbb R^d$ has finite and positive measure. 
		Let the  system $\mathcal E({ \Gamma})$ be a  Riesz basis  for 
		$L^2(\Omega)$. Then there is   a positive constant $0<L<1$ such that for any  $\Gamma_p$,  permutation of $\Gamma$,  with $0<c\leq L$,  the system of exponential functions  $\mathcal E(\Gamma_p)$ is also a Riesz  basis for $L^2(\Omega)$. 
	\end{prop}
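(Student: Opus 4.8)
The plan is to apply the Paley-Wiener Theorem (\autoref{PWThm}) with $X=L^2(\Omega)$, $f_\gamma = e_\gamma$ where $e_\gamma(x)=e^{2\pi i x\cdot\gamma}$ is the given exponential Riesz basis, and $g_\gamma = e_{\kappa_\gamma}$ the perturbed system. Write $\delta_\gamma := \kappa_\gamma-\gamma$, so $\abs{\delta_\gamma}<c$, and note that on $\Omega$
\begin{equation*}
 e_\gamma(x)-e_{\kappa_\gamma}(x)=e_\gamma(x)\paren{1-e^{2\pi i x\cdot\delta_\gamma}}=-\,e_\gamma(x)\sum_{k=1}^{\infty}\frac{(2\pi i\,x\cdot\delta_\gamma)^{k}}{k!},
\end{equation*}
the series converging absolutely and uniformly on any bounded set. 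For a finitely supported scalar sequence $c=\set{c_\gamma}$ one then needs a constant $\lambda<1$, independent of $c$, with
\begin{equation*}
 \norm{\sum_\gamma c_\gamma\paren{e_\gamma-e_{\kappa_\gamma}}}_{L^2(\Omega)}\le \lambda\,\norm{\sum_\gamma c_\gamma e_\gamma}_{L^2(\Omega)}.
\end{equation*}
Once this is available, \autoref{PWThm} shows that $\mathcal E(\Gamma_p)$ is a basis; moreover the same estimate says the operator $e_\gamma\mapsto e_{\kappa_\gamma}$ extends to a bounded operator on $L^2(\Omega)$ differing from the identity by $\lambda<1$ in operator norm, hence invertible, so $\mathcal E(\Gamma_p)$ is the image of the Riesz basis $\mathcal E(\Gamma)$ under an invertible map and is therefore a Riesz basis.

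For the key estimate I would first treat the case where $\Omega$ is essentially bounded, with $R:=\operatorname{ess\,sup}_{x\in\Omega}\abs{x}<\infty$. Using the multinomial expansion $(x\cdot\delta_\gamma)^{k}=\sum_{\abs{\alpha}=k}\frac{k!}{\alpha!}x^{\alpha}\delta_\gamma^{\alpha}$, the upper Riesz (Bessel) bound $u$ of $\mathcal E(\Gamma)$ gives, for $\abs{\alpha}=k$,
\begin{equation*}
 \norm{\sum_\gamma c_\gamma\delta_\gamma^{\alpha}e_\gamma}_{L^2(\Omega)}^{2}\le u\sum_\gamma\abs{c_\gamma}^{2}\abs{\delta_\gamma^{\alpha}}^{2}\le u\,c^{2k}\sum_\gamma\abs{c_\gamma}^{2},
\end{equation*}
and since $\abs{x^{\alpha}}\le R^{k}$ on $\Omega$ and $\sum_{\abs{\alpha}=k}\frac{k!}{\alpha!}=d^{k}$, the triangle inequality yields $\norm{\sum_\gamma c_\gamma(x\cdot\delta_\gamma)^{k}e_\gamma}_{L^2(\Omega)}\le d^{k}R^{k}\sqrt{u}\,c^{k}\,\norm{c}_{\ell^2}$. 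Summing the series and then using the lower Riesz bound $\ell$ (so $\norm{c}_{\ell^2}\le \ell^{-1/2}\norm{\sum_\gamma c_\gamma e_\gamma}_{L^2(\Omega)}$) gives
\begin{equation*}
 \norm{\sum_\gamma c_\gamma\paren{e_\gamma-e_{\kappa_\gamma}}}_{L^2(\Omega)}\le \sqrt{u/\ell}\,\paren{e^{2\pi d R c}-1}\norm{\sum_\gamma c_\gamma e_\gamma}_{L^2(\Omega)}.
\end{equation*}
The right-hand factor tends to $0$ as $c\to0^{+}$, so any $L$ with $0<L<\min\set{1,\ \tfrac{1}{2\pi d R}\log\paren{1+\sqrt{\ell/u}}}$ makes it $<1$ for all $c\le L$, which is exactly the hypothesis needed above.

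The main obstacle is the genuinely unbounded case: $\Omega$ of finite measure but $R=\infty$. Then the power-series bound collapses, and the underlying reason is real — on the far part of $\Omega$ the factor $1-e^{2\pi i x\cdot\delta_\gamma}$ can be of size $\asymp 2$ however small $c$ is, and there is no decay in $\gamma$ to absorb it. The natural fix is to exhaust $\Omega$ by the bounded sets $\Omega_R=\Omega\cap\set{\abs{x}\le R}$, apply the estimate above on $\Omega_R$, and control the tail $\norm{\sum_\gamma c_\gamma(e_\gamma-e_{\kappa_\gamma})}_{L^2(\Omega\setminus\Omega_R)}$ using that $\abs{\Omega\setminus\Omega_R}\to0$. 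Making that tail term small \emph{uniformly} in $c$ — rather than with a constant depending on the number of nonzero coefficients — is the delicate point where the actual work lies; the rest of the argument, including the passage from ``basis'' to ``Riesz basis'', is routine.
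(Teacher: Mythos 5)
Your argument is the same Paley--Wiener route the paper takes, and for bounded $\Omega$ it is complete and in fact more explicit than the paper's own proof: the paper simply cites Theorem \ref{PWThm} and asserts that the map $e^{2\pi i\gamma\cdot x}\mapsto e^{2\pi i\kappa_\gamma\cdot x}$ extends to an isomorphism of $L^2(\Omega)$, without ever verifying the smallness hypothesis $\|\sum_\gamma c_\gamma(e_\gamma-e_{\kappa_\gamma})\|\le\lambda\|\sum_\gamma c_\gamma e_\gamma\|$; your power-series/multinomial computation supplies exactly that verification, with the explicit admissible range $c<\frac{1}{2\pi dR}\log(1+\sqrt{\ell/u})$, and your final step (the perturbation operator differs from the identity by $\lambda<1$, hence is invertible, hence the perturbed system is a Riesz basis) is the standard and correct way to upgrade ``basis'' to ``Riesz basis.''

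The gap you flag is, however, a genuine one relative to the statement as written: the proposition only assumes $|\Omega|<\infty$, and in the paper's own application (the corollary on stability of Gabor orthogonal bases, where $\Omega$ is a tiling/spectral set) $\Omega$ need not be bounded. Your proposed fix by exhaustion does not close it: on the tail $\Omega\setminus\Omega_R$ the only uniform information available is the per-element bound $\sup_\gamma\|e_\gamma-e_{\kappa_\gamma}\|_{L^2(\Omega\setminus\Omega_R)}\le\varepsilon$ (which follows from dominated convergence and depends only on $|\kappa_\gamma-\gamma|$), and this yields $\|\sum_\gamma c_\gamma(e_\gamma-e_{\kappa_\gamma})\|_{L^2(\Omega\setminus\Omega_R)}\le\varepsilon\sum_\gamma|c_\gamma|$, an $\ell^1$ bound on the coefficients rather than the required $\ell^2$ bound; moreover the exponentials restricted to $\Omega\setminus\Omega_R$ have no lower Riesz bound there, so the tail cannot be absorbed into $\|\sum_\gamma c_\gamma e_\gamma\|$ by the same trick used on $\Omega_R$. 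So as it stands your proof establishes the proposition for bounded $\Omega$ only; to cover unbounded sets of finite measure one needs a genuinely different mechanism (or the statement should be restricted to bounded $\Omega$). It is worth noting that the paper's one-line proof glosses over precisely this same point, so your write-up is not weaker than the paper's argument --- but neither settles the unbounded case, and you are right to identify that as where the real work lies.
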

	The  set $\mathcal E(\Lambda)$ is called an unconditional basis for $L^2(\Omega)$ if  it is complete and 
	each element in $f\in L^2(\Omega)$ can be represented  uniquely as $f=\sum_{\lambda]in\Lambda} c_\lambda e^{2\pi i   \lambda\cdot x}$ where the series converges in  norm and unconditionally.  
	
	Proposition \ref{Young-stability}  is well-known  in dimension $d=1$ for unconditional bases (see e.g. \cite{Young01}). The result extends to Riesz bases of exponentials in higher dimensions as a consequence of the classical  Paley-Wiener Theorem  \ref{PWThm} as follows:  
	
	\begin{proof}[Proof of Proposition \ref{Young-stability}] Any Riesz basis is an unconditional basis, and by the assumptions of Proposition \ref{Young-stability} and the Paley-Wiener theorem,  for any small perturbation  $\Gamma_p$ with $c$ sufficiently small (as in  Definition \ref{permutation-sequence}), $\mathcal E(\Gamma_p)$ also  is an unconditional basis for $L^2(\Omega)$. To prove that $\mathcal E(\Gamma_p)$ is a Riesz sequence,  notice that
		any mapping on $L^2(\Omega)$ defined by  $e^{2\pi i \lambda} \to e^{2\pi i \kappa_\lambda}$ can be extended to an isomorphism on $L^2(\Omega)$. This proves that the Riesz inequalities hold for  $\mathcal E(\Gamma_p)$, whence    the claim.  
	\end{proof}

	We conclude this section with a Paley-Wiener theorem for Gabor orthogonal bases and  prove the  stability of  Gabor  Riesz bases with separable time-frequency shifts domain.

	\begin{corollary}[Paley-Wiener theorem for Gabor orthogonal bases] Assume that $\mathcal G(\chi_{\Omega}, \Lambda\times \Gamma)$ is an orthogonal basis for $L^2(\Bbb R^d)$.  Assume that $\Gamma_p=\{\kappa_\gamma\}_{\gamma\in \Gamma}\subset \Bbb R^d$ is a countable set such that  for some constant  $0\leq L<1$  we have 
		\begin{align}\label{kadec}
			|\gamma-\kappa_\gamma| < L, \quad \forall \gamma\in   \Gamma. 
		\end{align} If $L$ is small enough, then  
		$\mathcal G(\chi_{\Omega}, (\Lambda+\alpha)\times \Gamma_p)$ is a Gabor Riesz basis for $L^2(\Bbb R)$, $\alpha>0$.
	\end{corollary}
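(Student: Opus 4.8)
The plan is to reduce the statement to Theorem~\ref{Tiling-Riesz-Riesz} via \ref{Han-theorem} and the stability result Proposition~\ref{Young-stability}. First, since $\mathcal G(\chi_\Omega,\Lambda\times\Gamma)$ is a Gabor orthogonal basis, \ref{Han-theorem} gives that $(\Omega,\Lambda)$ is a tiling pair and that $\mathcal E(\Gamma)$ is an orthogonal basis for $L^2(\Omega)$; in particular $(\Omega,\Gamma)$ is a Riesz spectral pair, since an orthogonal basis (whose elements all have norm $|\Omega|^{1/2}$) is a Riesz basis. Translating the set $\Lambda$ by a fixed vector does not affect the tiling relation: from $\sum_{\lambda\in\Lambda}\chi_\Omega(x-\lambda)=1$ for a.e.\ $x$ we get $\sum_{\lambda\in\Lambda}\chi_\Omega\bigl(x-(\lambda+\alpha)\bigr)=\sum_{\lambda\in\Lambda}\chi_\Omega\bigl((x-\alpha)-\lambda\bigr)=1$ for a.e.\ $x$, so $(\Omega,\Lambda+\alpha)$ is again a tiling pair for every $\alpha\in\RR^d$.

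Second, I would apply Proposition~\ref{Young-stability} to the Riesz basis $\mathcal E(\Gamma)$ of $L^2(\Omega)$: it furnishes a threshold $0<L_0<1$ such that, whenever $\Gamma_p=\{\kappa_\gamma\}_{\gamma\in\Gamma}$ satisfies $|\gamma-\kappa_\gamma|<L_0$ for all $\gamma$, the perturbed exponential system $\mathcal E(\Gamma_p)$ is again a Riesz basis for $L^2(\Omega)$; that is, $(\Omega,\Gamma_p)$ is a Riesz spectral pair. Choosing $L\le L_0$ in the hypothesis \eqref{kadec} guarantees this applies. (This is where the phrase ``if $L$ is small enough'' is used: the admissible $L$ in the corollary is exactly any value at most the Kadec-type constant $L_0$ produced by Proposition~\ref{Young-stability} for the particular domain $\Omega$.)

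Finally, I would combine the two facts: $(\Omega,\Lambda+\alpha)$ is a tiling pair and $(\Omega,\Gamma_p)$ is a Riesz spectral pair, so the implication $(\ref{thm1.1.i})\Rightarrow(\ref{thm1.1.ii})$ of Theorem~\ref{Tiling-Riesz-Riesz} yields directly that $\mathcal G(\chi_\Omega,(\Lambda+\alpha)\times\Gamma_p)$ is a Riesz basis for $L^2(\RR^d)$, which is the assertion. The argument is essentially a chaining of three black boxes (\ref{Han-theorem}, Proposition~\ref{Young-stability}, Theorem~\ref{Tiling-Riesz-Riesz}), and the only point that genuinely requires attention is the bookkeeping of the perturbation constant — one must take the $L$ of the corollary no larger than the stability threshold $L_0$ of Proposition~\ref{Young-stability}, and one should note that $\Gamma_p$ remains a legitimate (separated, distinct-point) spectrum after a sufficiently small perturbation, which is again supplied by the Kadec-type statement. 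Beyond that, no real obstacle arises, since the tiling hypothesis and the equivalence in Theorem~\ref{Tiling-Riesz-Riesz} do all the heavy lifting.
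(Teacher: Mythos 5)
Your proposal is correct and follows essentially the same route as the paper's own proof: deduce from the orthogonal Gabor basis that $(\Omega,\Lambda)$ tiles and $(\Omega,\Gamma)$ is a spectral (hence Riesz spectral) pair, apply Proposition \ref{Young-stability} to pass to $\mathcal E(\Gamma_p)$, and conclude via Theorem \ref{Tiling-Riesz-Riesz} applied to the tiling pair $(\Omega,\Lambda+\alpha)$. Your extra bookkeeping (the explicit threshold $L_0$, the invariance of tiling under translation of $\Lambda$, and noting that an orthogonal basis of equal-norm exponentials is a Riesz basis) only makes explicit steps the paper leaves implicit.
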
 
	\begin{proof} 
		By the assumption that $\mathcal G(\chi_{\Omega}, \Lambda\times \Gamma)$ is an orthogonal basis, the pair $(\Omega, \Lambda)$ is a tiling pair and $(\Omega,  \Gamma)$ is a spectral pair \cite{LM19}. By  Proposition  \ref{Young-stability} and the condition \eqref{kadec}, $\mathcal E(\Gamma_p)$ is a Riesz basis for $L^2(\Omega)$. By applying Theorem    \ref{Tiling-Riesz-Riesz} to    the tiling pair $(\Omega, \Lambda+\alpha)$ and the Riesz spectral pair $(\Omega, \Gamma_c)$, the assertion of the corollary  holds, i.e. $\mathcal G(\chi_{\Omega}, (\Lambda+\alpha)\times \Gamma_c)$ is a Gabor Riesz basis for $L^2(\Bbb R)$.   
	\end{proof}

	\section{Lattice Zak transform}\label{lattice-Zak-Trans}

	The classical Zak transform is associated with lattices of the form  $\alpha\ZZ^d$. The case $d=1$ was introduced in \cite{janssen1982bargmann}. For the definition of the classical Zak transform   in higher dimensions see e.g.  \cite{grochenig2001foundations}. Here, we introduce an extension to the lattice $M(\Bbb Z^d)$.
	
	\begin{definition}[Lattice   Zak transform]\label{GZT}  For a full rank  lattice   $M(\Bbb Z^d)$, we formally define the  lattice Zak transform of a function $f$  by 
		\begin{align}\label{GZT-definition}Z_Mf(x,\xi) := |Q_M|^{1/2}\sum_{n\in \Bbb Z^d} f(x+M(n)) e^{2\pi i \xi\cdot M(n)} \quad (x, \xi)\in \Bbb R^d\times \Bbb R^d. 
		\end{align} 
		When $M=I$, where $I$ is the $d\times d$ identity matrix, we omit the subscript and denote the Zak transform of $f$ by $Zf$.   
	\end{definition}

	Let $M$ and $N$ be any  $d\times d$ full rank matrix such that $N^TM$ is  a matrix with integer entries. The Zak transform $Z_M$ clearly possesses {\it quasi-periodicity}  relations with respect to the lattices  $M(\Bbb Z^d)$ and $N(\Bbb Z^d)$:
	\begin{align}
		Z_Mf(x+M(m), \xi) &= e^{-2\pi i \xi\cdot M(m)} Z_Mf(x, \xi)\label{qp1}\\
		Z_Mf(x, \xi+N(m)) &=  Z_Mf(x, \xi)\label{qp2}.
	\end{align}
	The quasi-periodicity of the transform shows that the values of the transform on $\Bbb R^{d}\times \Bbb R^d$ is completely determined by its values on $Q_M\times Q_N$. 
	
	\begin{remark}
		The  definition of the lattice Zak transform in Definition \ref{GZT} coincides with the classical definition of the Zak transform when $M=\text{diag}(\alpha)$ is a diagonal $d\times d$ matrix with $\alpha$ in diagonal entries. The quasi-periodicity is an extension in the classical case when $N=\text{diag}(\beta)$ with $\alpha\beta=1$. For some properties of classical Zak transforms  see e.g.  \cite{janssen1988zak} and  \cite{Hei-Wall-SIAM_Review}.
		
		A generalized Zak transform in the setting of unitary group representation theory was defined for arbitrary Hilbert spaces by Hern\'andez and co-authors in \cite{hernandez2010cyclic}. The definition relies on a unitary map that the authors termed {\it the Bracket map}. The classical Zak transform is a
		restriction of the generalized Zak transform to the case when the Hilbert transform is $L^2(\Bbb R^d)$ and  the unitary representation (Gabor) is defined on $\alpha\Bbb Z^d\times\beta\Bbb Z^d$.  For non-commutative cases, e.g. the Heisenberg group, see \cite{barbieri2014bracket}. 
	\end{remark}
	
	\subsection{Common fundamental domains for lattices}
	
	Recall that two full rank lattices in $\RR^d$ with the same volume have a common fundamental domain that is a tiling with respect to both lattices.
	\begin{theorem}[Theorem 1.1 in \cite{han2001lattice}]\label{thm:Han-Wang-Thm} Let $M(\Bbb Z^d)$ and $N(\Bbb Z^d)$ be two full rank lattices in $\Bbb R^d$ such that $\text{Vol}(M(\Bbb Z^d)) =\text{Vol}(N(\Bbb Z^d))$. Then $M(\Bbb Z^d)$ and $N(\Bbb Z^d)$ have a common fundamental domain. That is, there exists a measurable set $Q\subset \Bbb R^d$ such that $Q$ is a tiling set with respect to both $M(\Bbb Z^d)$ and $N(\Bbb Z^d)$.  
	\end{theorem}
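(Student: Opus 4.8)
The plan is to reduce to the case where one lattice is $\ZZ^d$, recast ``common fundamental domain'' as a covering‑multiplicity condition, dispose of the arithmetic (commensurable) case via Hall's marriage theorem, and treat the general case by a measure‑theoretic exhaustion.

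\emph{Reduction and reformulation.} Write $\Lambda_1=M(\ZZ^d)$ and $\Lambda_2=N(\ZZ^d)$. Applying the linear map $M^{-1}$ reduces the problem to finding a common fundamental domain for $\ZZ^d$ and $\Gamma:=M^{-1}N(\ZZ^d)$, which satisfies $\text{Vol}(\Gamma)=|\det(M^{-1}N)|=1$ by hypothesis; indeed, if $Q$ works for $(\ZZ^d,\Gamma)$ then $M(Q)$ works for $(\Lambda_1,\Lambda_2)$. I would then use the elementary characterization: a measurable set $Q$ of measure $1$ is a fundamental domain for $\ZZ^d$ if and only if its $\ZZ^d$‑covering multiplicity $m_Q(x):=\#\{n\in\ZZ^d : x-n\in Q\}$ equals $1$ for a.e.\ $x$. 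Since $\int_{\RR^d/\ZZ^d}m_Q=|Q|=1$ for \emph{every} fundamental domain $Q$ of $\Gamma$, the total over‑covering $\int_{\RR^d/\ZZ^d}(m_Q-1)_+$ always equals the uncovered measure $|\{m_Q=0\}|$, and both vanish exactly when $Q$ is simultaneously a fundamental domain for $\ZZ^d$. This balance — which is precisely the hypothesis $\text{Vol}(\Lambda_1)=\text{Vol}(\Lambda_2)$ — drives the whole argument. The goal becomes: produce a fundamental domain $Q$ of $\Gamma$ with $m_Q\equiv1$.

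\emph{The commensurable case.} Suppose first that $G:=\ZZ^d+\Gamma$ is discrete, equivalently that $\ZZ^d\cap\Gamma$ has full rank. Then $G$ is a lattice containing both $\ZZ^d$ and $\Gamma$ with finite index, and the equality $\text{Vol}(\ZZ^d)=\text{Vol}(\Gamma)=1$ forces $[G:\ZZ^d]=[G:\Gamma]=:p$. I would pass to the finite abelian group $\widetilde G:=G/(\ZZ^d\cap\Gamma)$, equipped with the two equal‑order subgroups $K_1:=\ZZ^d/(\ZZ^d\cap\Gamma)$ and $K_2:=\Gamma/(\ZZ^d\cap\Gamma)$, and apply Hall's marriage theorem to the bipartite graph whose vertex classes are the cosets of $K_1$ and the cosets of $K_2$, with an edge whenever two such cosets meet. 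Hall's condition is immediate from counting — a union of $k$ cosets of $K_1$ has $k|K_1|=k|K_2|$ elements and hence meets at least $k$ cosets of $K_2$ — so there is a common system of coset representatives $\bar v_1,\dots,\bar v_p\in\widetilde G$. Lifting these to $v_1,\dots,v_p\in G$ and taking any bounded fundamental domain $R$ of $G$, the set $Q:=\bigsqcup_{j=1}^p(R+v_j)$ tiles $\RR^d$ by $\ZZ^d$, since $\{v_j\}$ being a transversal of $G/\ZZ^d$ gives $\bigsqcup_{n\in\ZZ^d}(Q+n)=\bigsqcup_{g\in G}(R+g)=\RR^d$; by the identical argument it also tiles by $\Gamma$, so $Q$ is a common fundamental domain.

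\emph{The general case and the main obstacle.} When $\ZZ^d+\Gamma$ is dense (or only partially dense) I would construct $Q$ by a measure‑theoretic exhaustion. Consider the ``good'' partial data: a measurable set $Q$ on which both quotient maps $\pi_{\ZZ^d}$ and $\pi_\Gamma$ are a.e.\ injective and for which $m:=|\pi_{\ZZ^d}(Q)|=|\pi_\Gamma(Q)|$; these are closed under increasing unions, so Zorn's lemma yields a maximal one. The key sub‑lemma is that \emph{whenever $m<1$ one can adjoin to $Q$ a positive‑measure piece that creates no new collision under either projection}; granting this, a maximal good $Q$ has $m=1$, hence $\pi_{\ZZ^d}|_Q$ and $\pi_\Gamma|_Q$ are a.e.\ bijections and $Q$ is the desired common fundamental domain. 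The hard part is this sub‑lemma. It is, in effect, a measurable marriage theorem for the two sub‑equivalence relations cut out inside the orbit equivalence relation of $G=\ZZ^d+\Gamma$ by $\ZZ^d$ and by $\Gamma$; what makes Hall's condition hold uniformly on each $G$‑orbit — and thus makes the adjunction possible, using that the ``unused'' parts of the two quotients have the common mass $1-m$ — is again exactly the hypothesis $\text{Vol}(\Lambda_1)=\text{Vol}(\Lambda_2)$. I do not expect a cheap shortcut: approximating $\Gamma$ by commensurable lattices, invoking the previous case, and passing to a weak‑$*$ limit of the resulting indicator functions only produces a ``fuzzy'' common fundamental domain — a function $0\le f\le1$ with $\sum_{n\in\ZZ^d}f(\cdot+n)\equiv1$ and $\sum_{\gamma\in\Gamma}f(\cdot-\gamma)\equiv1$ — and rounding such an $f$ to an indicator is the same marriage problem.
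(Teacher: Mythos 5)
First, a point of reference: the paper you are working from does not prove this statement at all --- it is quoted verbatim as Theorem 1.1 of Han and Wang \cite{han2001lattice} --- so your attempt can only be measured against the original Han--Wang argument, not against anything in this paper. Your reduction to $\ZZ^d$ and $\Gamma=M^{-1}N(\ZZ^d)$ of covolume $1$ is correct, and so is your commensurable case: when $G=\ZZ^d+\Gamma$ is discrete, equality of covolumes forces $[G:\ZZ^d]=[G:\Gamma]$, the Hall condition is verified by counting in the finite abelian quotient, a common transversal $\{v_j\}$ exists, and $Q=\bigsqcup_j(R+v_j)$ with $R$ a fundamental domain of $G$ tiles by both lattices.

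The genuine gap is exactly where you locate it, and it is not a peripheral one: the non-commensurable case --- your extension sub-lemma that a ``good'' partial set with $m<1$ can always be enlarged without creating collisions under either projection --- is the entire substance of the theorem, and you neither prove it nor reduce it to a citable result, so as written your argument establishes the theorem only for commensurable lattices. Two further remarks. (i) The Zorn's lemma step is itself unsound as stated: an uncountable increasing chain of measurable sets need not have a measurable union with the right properties, so maximality should instead be extracted by a countable exhaustion maximizing the common mass $m$. (ii) The missing sub-lemma is true, but closing it requires an input beyond volume counting: when $\ZZ^d+\Gamma$ is dense, the translation action of $\Gamma$ on the torus $\RR^d/\ZZ^d$ is ergodic, and one can transport the over-covered part of the multiplicity function onto the uncovered part by countably many moves of the form $P\mapsto P+\gamma$, $\gamma\in\Gamma$ (each of which preserves the property of being a fundamental domain of $\Gamma$) --- a Hopf-style equidecomposition/exhaustion; in the intermediate case one uses the structure of the closed subgroup $\overline{\ZZ^d+\Gamma}=V\oplus\Lambda_0$ to split coordinates into dense and discrete directions and combine that ergodic argument with your discrete-case construction. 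Without some such ergodicity or equidecomposition mechanism, the ``measurable marriage'' you invoke does not follow from the hypothesis $\text{Vol}(M(\ZZ^d))=\text{Vol}(N(\ZZ^d))$ alone; your own observation that weak-$*$ limits only yield a fractional (``fuzzy'') common fundamental domain confirms that the step you left open is where the real difficulty lives.
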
 
	An  immediate corollary  of this theorem is our next result that the same result holds for lattices with the same density. 
	\begin{corollary}  Let $M(\Bbb Z^d)$ and $N(\Bbb Z^d)$ be two full rank lattices in $\Bbb R^d$ such that $den(M(\Bbb Z^d)\times N(\Bbb Z^d))=1$. 
		Then $M(\Bbb Z^d)$ and $(N(\Bbb Z^d))^{\perp}$ have common fundamental domains. \end{corollary}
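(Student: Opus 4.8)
The plan is to reduce the statement to Theorem~\ref{thm:Han-Wang-Thm} by a short volume computation. First I would unwind the density hypothesis: viewing $M(\Bbb Z^d)\times N(\Bbb Z^d)$ as a full rank lattice in $\Bbb R^{2d}$ generated by the block-diagonal matrix $\mathrm{diag}(M,N)$, its density is $|\det \mathrm{diag}(M,N)|^{-1}=|\det M|^{-1}\,|\det N|^{-1}$. Hence the assumption $\text{dens}(M(\Bbb Z^d)\times N(\Bbb Z^d))=1$ is equivalent to $|\det M|\,|\det N|=1$, i.e.\ to $|\det(MN)|=1$.

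Next I would identify the dual lattice and compute its volume. By the formula recorded in the definition of the dual lattice, $(N(\Bbb Z^d))^{\perp}=N^{-T}(\Bbb Z^d)$, so $\text{Vol}\big((N(\Bbb Z^d))^{\perp}\big)=|\det(N^{-T})|=|\det N|^{-1}$. Combining this with the identity from the previous step gives $\text{Vol}\big((N(\Bbb Z^d))^{\perp}\big)=|\det M|=\text{Vol}(M(\Bbb Z^d))$. Thus $M(\Bbb Z^d)$ and $(N(\Bbb Z^d))^{\perp}$ are two full rank lattices in $\Bbb R^d$ of equal volume, and Theorem~\ref{thm:Han-Wang-Thm} produces a measurable set $Q\subset \Bbb R^d$ which is simultaneously a fundamental domain (equivalently, a tiling set) for both lattices. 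That is exactly the assertion.

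I do not expect a genuine obstacle here: the content is a bookkeeping identity relating the $\Bbb R^{2d}$-density of a separable (product) lattice to the volumes of its two factors, together with the already-cited common-fundamental-domain result. The one point that warrants a moment of care is fixing the normalization of the density of $M(\Bbb Z^d)\times N(\Bbb Z^d)$, namely $\text{dens}(M(\Bbb Z^d)\times N(\Bbb Z^d))=\big(\text{Vol}(M(\Bbb Z^d))\,\text{Vol}(N(\Bbb Z^d))\big)^{-1}$, so that the hypothesis ``density equals one'' translates precisely into the volume-matching condition $\text{Vol}(M(\Bbb Z^d))=\text{Vol}((N(\Bbb Z^d))^{\perp})$ required to invoke Theorem~\ref{thm:Han-Wang-Thm}.
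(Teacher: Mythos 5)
Your proposal is correct and follows essentially the same route as the paper: translate the density hypothesis into $|\det(MN)|=1$, note $\text{Vol}(M(\Bbb Z^d))=|\det M|=|\det N|^{-1}=|\det(N^{-T})|=\text{Vol}\big((N(\Bbb Z^d))^{\perp}\big)$, and invoke Theorem~\ref{thm:Han-Wang-Thm}. Your version is, if anything, slightly more careful than the paper's in keeping absolute values and in spelling out the normalization of the density of the product lattice.
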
 
	
	\begin{proof} 
		Note that  $den(M(\Bbb Z^d)\times N(\Bbb Z^d))=1$ implies  $\det(MN)=1$, or $\det(M) = \det(N^{-1}) = \det(N^{-T})$. Since $\text{Vol}(\Lambda) = |\det(M)| = |\det(N^{-T})| = \text{Vol}((N(\Bbb Z^d))^\perp)$, the claim follows by Theorem \ref{thm:Han-Wang-Thm}. 
	\end{proof}

	Another consequence of Theorem \ref{thm:Han-Wang-Thm} is our next result  which states that  orthogonal Gabor bases can be constructed using two full-rank lattices that have a common fundamental domain: 
	
	\begin{corollary}\label{orthogonal-exponen-basis} 
		Let $M$ and $N$ be full rank $d\times d$ matrices  such that $\det(MN)=1$ and $N^TM$ is an integer matrix. Then $\mathcal E(N(\Bbb Z^d))$ is an orthogonal basis for $L^2(Q)$, where $Q$ is the common fundamental domain for the lattices $M(\Bbb Z^d)$ and $N^{-T}(\Bbb Z^d)$, and the Gabor system $\mathcal G(\chi_{Q}, M(\Bbb Z^d)\times N(\Bbb Z^d))$ is an orthogonal basis for $L^2(\Bbb R^d)$. 
	\end{corollary}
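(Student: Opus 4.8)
The plan is to deduce the statement from the combination of Theorem \ref{thm:Han-Wang-Thm}, the classical duality relation between a lattice and its dual, and Han-Wang's Theorem. First I would unwind the hypotheses. The condition $\det(MN) = 1$ together with $N^TM \in M_d(\ZZ)$ is the setup appearing in Theorem \ref{multi-tiling-and-completness} and in the previous corollary; in particular $\det(M) = \det(N^{-1}) = \det(N^{-T})$, so $\text{Vol}(M(\ZZ^d)) = \text{Vol}(N^{-T}(\ZZ^d))$. By Theorem \ref{thm:Han-Wang-Thm} applied to the two full-rank lattices $M(\ZZ^d)$ and $N^{-T}(\ZZ^d)$, there exists a common fundamental domain $Q$, i.e. a measurable set that tiles $\RR^d$ by translations by $M(\ZZ^d)$ and also tiles $\RR^d$ by translations by $N^{-T}(\ZZ^d)$; note $|Q| = \text{Vol}(M(\ZZ^d)) = |\det M|$.

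The second step is to identify $\mathcal E(N(\ZZ^d))$ as an orthogonal basis for $L^2(Q)$. Since $Q$ tiles $\RR^d$ by the lattice $N^{-T}(\ZZ^d)$, whose dual lattice is $\left(N^{-T}(\ZZ^d)\right)^\perp = N(\ZZ^d)$, this is precisely the standard fact that a fundamental domain of a lattice $L$ carries the exponential orthonormal basis indexed by the dual lattice $L^\perp$ (this is the ``only if'' direction of the tiling$\Leftrightarrow$spectral equivalence for lattice fundamental domains — one checks directly that $\langle e^{2\pi i \xi\cdot(\cdot)}, e^{2\pi i \eta\cdot(\cdot)}\rangle_{L^2(Q)} = \int_Q e^{2\pi i (\xi-\eta)\cdot x}\,dx$ equals $|Q|$ when $\xi = \eta$ and, using the tiling of $\RR^d$ by $N^{-T}(\ZZ^d)$, vanishes when $\xi - \eta \in N(\ZZ^d)\setminus\{0\}$; completeness follows since the $N^{-T}(\ZZ^d)$-periodization identifies $L^2(Q)$ with $L^2$ of the torus $\RR^d / N^{-T}(\ZZ^d)$, on which these exponentials are the classical Fourier basis). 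Thus $(Q, N(\ZZ^d))$ is a spectral pair.

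Finally I would invoke Han-Wang's Theorem \ref{Han-theorem} directly: we have exhibited that (1) $Q$ tiles $\RR^d$ by $M(\ZZ^d)$, and (2) $\mathcal E(N(\ZZ^d))$ is an orthogonal basis for $L^2(Q)$, which are exactly the two conditions equivalent to $\mathcal G(\chi_Q, M(\ZZ^d)\times N(\ZZ^d))$ being a Gabor orthogonal basis for $L^2(\RR^d)$. This yields the claim. I do not anticipate a genuine obstacle here; the only point requiring a little care is the bookkeeping with transposes and duals — making sure that the lattice whose fundamental domain we take is $N^{-T}(\ZZ^d)$ (so that its dual is $N(\ZZ^d)$, matching the modulation lattice in the Gabor system), rather than $N(\ZZ^d)$ itself. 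Everything else is a direct citation of results already established in the excerpt.
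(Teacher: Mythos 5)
Your argument is correct and matches the route the paper intends: apply Theorem \ref{thm:Han-Wang-Thm} to the equal-volume lattices $M(\Bbb Z^d)$ and $N^{-T}(\Bbb Z^d)$ to get the common fundamental domain $Q$, note that the dual lattice of $N^{-T}(\Bbb Z^d)$ is $N(\Bbb Z^d)$ so that $\mathcal E(N(\Bbb Z^d))$ is an orthogonal basis for $L^2(Q)$, and then conclude via Han--Wang's Theorem. The transpose/dual bookkeeping you flag is exactly the one point of care, and you have it right.
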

	
	The assumptions of Corollary \ref{orthogonal-exponen-basis}  always hold for the choice   $N:=M^{-T}$.  In one dimension, this is the only choice for $N$, but in higher dimensions, there are  examples of $N$ for which $N\neq M^{-T}$ and $M^{T}N$ is an integer matrix. The next example illustrates this.
	
	\begin{example}\label{example-of-MN}
		\begin{figure}[t]
			\centering
			\includegraphics[width=.6\linewidth]{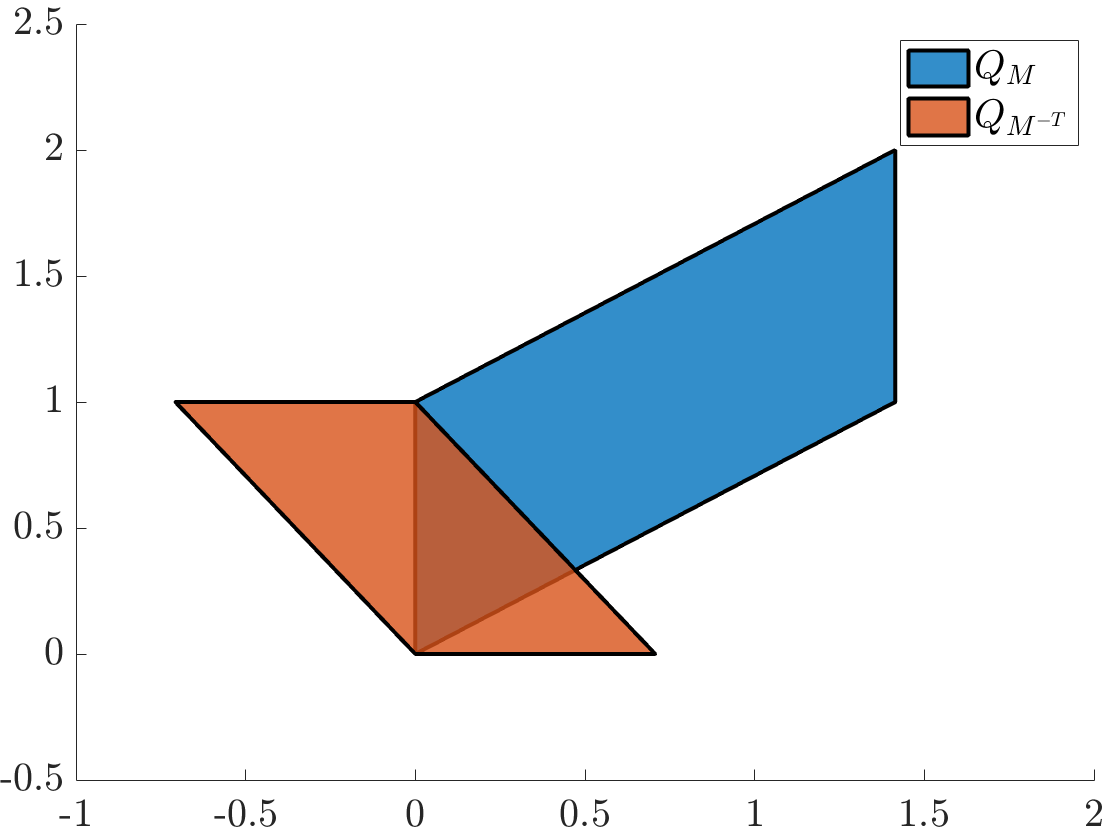}\caption{The fundamental domains for lattices $M(\ZZ^d)$ and ${{N}^{-T}}(\ZZ^d)$ coincide and are different than the fundamental domain for the lattice ${M^{-T}}(\ZZ^d)$.}\label{fig:QMQN}
		\end{figure}
		Let $M=\begin{bmatrix} 
			\sqrt{2} & 0\\
			1 & 1 
		\end{bmatrix}$ and 
		$N=\begin{bmatrix} 
			\sqrt{2}^{-1} & 	-\sqrt{2}\\
			0 & 1 
		\end{bmatrix}$. %
		Then $N\neq M^{-T}$, however $N^TM$ 
		is an integer matrix.  The common fundamental domain for the lattices $\Lambda=M(\Bbb Z^d)$ and $\Gamma^\perp=N^{-T}(\Bbb Z^d)$ is given by the parallelogram with vertices in the set $\{ (0,0), (0,1), (\sqrt{2}, 1), (\sqrt{2},2)\}$. The fundamental domains $Q=Q_{M}=Q_{N^{-T}}$ and $Q_{M^{-T}}$ are shown in Figure \ref{fig:QMQN}.

		More generally, let $a$ and $d$ be any   numbers such that $ad\neq 0$, and let $k, \ell, m, n$ be integers such that $\ell\neq 0$, $n\neq m$. Take $M, N$ to be the matrices  
		\[ M=\begin{bmatrix} 
			a & 0\\
			c & d
		\end{bmatrix}, \quad 
		N=\begin{bmatrix} 
			\cfrac{nd-k}{da} &	\cfrac{md-\ell}{da}\\
			\cfrac{k}{d} & \cfrac{\ell}{d} 
		\end{bmatrix}.\] Then, $\det(MN)=1$, $N^T M$ is an integer matrix, and  $N\neq M^{-T}$.
	\end{example}

	\subsection{The lattice Zak transform is isometric} 
	Proposition  \ref{Zak-transform-isometry}  justifies the definition and isometry property of the lattice Zak transform. 
	
	\begin{prop}\label{Zak-transform-isometry} Let $M$ and $N$ be full rank $d\times d$ matrices such that $\det(MN)=1$ and $MN^T$ is an integer matrix.  Then the Zak transform is a unitary map from $L^2(\Bbb R^d)$ onto  $L^2(Q_M\times Q_N)$, and  for any $f\in L^2(\Bbb R^d)$, 
		\begin{align}\label{isometry}\|Z_Mf\|_{L^2(Q_M\times Q_N)} =   \|f\|_{L^2(\Bbb R^d)}.
		\end{align}
		
	\end{prop}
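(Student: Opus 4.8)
The plan is to verify two things: first, that $Z_M$ maps $L^2(\RR^d)$ \emph{into} $L^2(Q_M\times Q_N)$ and is norm-preserving; second, that it is onto. For the isometry, I would start with the dense subspace of functions supported on a single coset translate, or more robustly with nice $f$ (say Schwartz, or simple functions built from indicator functions of small boxes), compute $\|Z_Mf\|_{L^2(Q_M\times Q_N)}^2$ directly, and then extend by density. Expanding the definition \eqref{GZT-definition},
\[
\|Z_Mf\|_{L^2(Q_M\times Q_N)}^2 = |Q_M|\int_{Q_M}\int_{Q_N}\Bigl|\sum_{n\in\ZZ^d} f(x+M(n))e^{2\pi i\xi\cdot M(n)}\Bigr|^2\,d\xi\,dx.
\]
Expanding the square gives a double sum over $n,m\in\ZZ^d$ of $f(x+M(n))\overline{f(x+M(m))}\,e^{2\pi i\xi\cdot M(n-m)}$. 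The key computation is the $\xi$-integral over $Q_N$: since $N^TM$ is an integer matrix, $M(n-m)$ is an element whose pairing with $N(\ZZ^d)$ is integral, i.e.\ $M(n-m)\in (N(\ZZ^d))^\perp$, so $\xi\mapsto e^{2\pi i\xi\cdot M(n-m)}$ is a character on $\RR^d/N(\ZZ^d)$ and integrates to $|Q_N|$ when $n=m$ and $0$ otherwise. This collapses the double sum to the diagonal, leaving $|Q_M|\,|Q_N|\int_{Q_M}\sum_{n}|f(x+M(n))|^2\,dx$. Since $\det(MN)=1$ we have $|Q_M||Q_N|=1$, and since $Q_M$ is a fundamental domain for $M(\ZZ^d)$ the sets $Q_M+M(n)$, $n\in\ZZ^d$, tile $\RR^d$, so $\int_{Q_M}\sum_n|f(x+M(n))|^2\,dx=\|f\|_{L^2(\RR^d)}^2$. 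This yields \eqref{isometry}.

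For surjectivity, I would exhibit an orthonormal basis of $L^2(\RR^d)$ whose image under $Z_M$ is an orthonormal basis of $L^2(Q_M\times Q_N)$. The natural candidate is a Gabor-type system: take the translates-and-modulations $g_{k,\ell}(t)=|Q_M|^{-1/2}e^{2\pi i N(\ell)\cdot t}\chi_{Q_M}(t-M(k))$ for $k,\ell\in\ZZ^d$ — by Corollary \ref{orthogonal-exponen-basis} (or rather the underlying Han–Wang statement, since $Q_M$ tiles by $M(\ZZ^d)$ and $\mathcal E(N(\ZZ^d))$ is an orthogonal basis for $L^2(Q_M)$ when $N^TM$ is integral and $\det(MN)=1$) this is an orthonormal basis of $L^2(\RR^d)$. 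A direct computation with the quasi-periodicity relations \eqref{qp1}–\eqref{qp2} shows $Z_M g_{k,\ell}(x,\xi)= e^{2\pi i\xi\cdot M(k)}e^{2\pi i N(\ell)\cdot x}\chi_{Q_M}(x)$ up to a unimodular constant, and these are (a unimodular rescaling of) the standard Fourier basis $e^{2\pi i (M^T\xi)\cdot k}e^{2\pi i (N(\ell))\cdot x}$ on $Q_M\times Q_N$, which is complete in $L^2(Q_M\times Q_N)$. An isometry with dense (indeed full) range is unitary, giving the claim.

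The main obstacle I anticipate is bookkeeping with the two lattices and their duals: one must be careful that the hypothesis written in the statement is $MN^T$ integer (the surrounding text uses $N^TM$), that $\det(MN)=1$ forces $|Q_M||Q_N|=1$, and that the relevant orthogonality is $M(\ZZ^d)\subset(N(\ZZ^d))^\perp$ — these are equivalent under transposition but it is easy to slip a transpose. A second, more technical point is the interchange of summation and integration needed to expand $\|Z_Mf\|^2$; this is cleanly handled by first working on a dense class of $f$ (e.g.\ continuous with compact support, where the defining sum is finite) and then invoking the isometry identity to pass to all of $L^2(\RR^d)$ by density, rather than trying to justify Fubini in full generality at the outset. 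Once those are pinned down, both halves are short.
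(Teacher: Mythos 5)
Your argument is correct and is essentially the paper's own (sketched) proof: the isometry comes from expanding $\|Z_Mf\|^2$ and using the orthogonality of $\mathcal E(M(\ZZ^d))$ on $Q_N$ together with $|Q_M||Q_N|=|\det(MN)|=1$ and the tiling of $\RR^d$ by $Q_M+M(\ZZ^d)$, and surjectivity comes from showing that $Z_M$ maps the Gabor orthogonal basis $\mathcal G(\chi_{Q_M}, M(\ZZ^d)\times N(\ZZ^d))$ onto the exponential orthogonal basis of $L^2(Q_M\times Q_N)$. You merely supply the density/Fubini bookkeeping and the explicit basis computation that the paper's sketch leaves implicit, so no further changes are needed.
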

	
	\begin{proof} We give a sketch of the proof here. 
		First, we  write the left side of \eqref{isometry}
		in terms of the inner product. From Corollary \ref{orthogonal-exponen-basis}, the exponentials $\mathcal{E}(M(\ZZ^d))$ are orthogonal on $Q_N$ and $|Q_M|=|Q_N|^{-1}$.  This proves the isometry, thus the injectivity of the map. The proof of subjectivity can be obtained with a  similar  argument used in Theorem 4.3.2 of \cite{Hei-Wall-SIAM_Review}.   Indeed, the image of  the Gabor orthogonal basis  $\mathcal G(\chi_{Q_M}, M(\Bbb Z^d)\times N(\Bbb Z^d))$ for $L^2(\Bbb R^d)$ under the map $Z_M$ is  the orthogonal exponential basis $\mathcal E(M^{-T}(\Bbb Z^d)\times N^{-T}(\Bbb Z^d))$ for $L^2(Q_M\times Q_N)$. 
		%
		This shows that $Z_M$ is surjective, completing the proof. 
	\end{proof}

	\subsection{Zeros of the lattice Zak transforms} 
	In one dimension, the continuity of the Zak transform implies the existence of a zero on the domain (see  e.g.  \cite{janssen1982bargmann} or Theorem  4.3.4  in  \cite{Hei-Wall-SIAM_Review}  and the  references therein). 
	In the 
	following lemma we  extend this  result to lattice Zak transforms. The proof follows from a modification of the argument used in dimension  $d=1$.  
	\begin{lemma}\label{Zero-of-Zak} 
		For $f\in L^2(\Bbb R^d)$, if the Zak transform $Z_Mf$ is continuous on $\Bbb R^d\times  {\Bbb R^d}$, then it must have a zero. 
	\end{lemma}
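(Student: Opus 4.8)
The plan is to reduce the statement to the one-dimensional situation and then run the classical winding-number (degree) argument for the ordinary Zak transform.

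First I would record the quasi-periodicity of $Z_M$. Directly from \eqref{GZT-definition} one has $Z_Mf(x+M(m),\xi)=e^{-2\pi i\,\xi\cdot M(m)}Z_Mf(x,\xi)$ for all $m\in\ZZ^d$, which is \eqref{qp1}; and since each factor $e^{2\pi i\,\xi\cdot M(n)}$ is unchanged when $\xi$ is replaced by $\xi+\eta$ with $\eta\cdot M(n)\in\ZZ$ for all $n$, one also gets $Z_Mf(x,\xi+\eta)=Z_Mf(x,\xi)$ for every $\eta\in(M(\ZZ^d))^\perp=M^{-T}(\ZZ^d)$, which is \eqref{qp2} with the frequency lattice taken to be the dual lattice. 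Thus both relations are available regardless of any extra hypothesis on a companion matrix $N$.

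Next I would carry out the dimensional reduction by restricting $Z_Mf$ to the real two-plane spanned by the translation vector $Me_1$ and the modulation vector $M^{-T}e_1$, where $e_1$ is the first standard basis vector of $\RR^d$. Set $h(s,t):=Z_Mf\big(sMe_1,\,tM^{-T}e_1\big)$ for $(s,t)\in\RR^2$. Since the map $(s,t)\mapsto(sMe_1,tM^{-T}e_1)$ is continuous, continuity of $Z_Mf$ gives continuity of $h$. Using the orthogonality identity $(M^{-T}e_1)\cdot(Me_1)=e_1^{T}M^{-1}Me_1=1$ together with $M^{-T}e_1\in(M(\ZZ^d))^\perp$, the two quasi-periodicity relations above specialize to
\[
h(s+1,t)=e^{-2\pi i t}\,h(s,t),\qquad h(s,t+1)=h(s,t),\qquad (s,t)\in\RR^2,
\]
which are precisely the quasi-periodicity relations of the classical one-dimensional Zak transform.

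Finally I would argue by contradiction. If $Z_Mf$ had no zero on $\RR^d\times\RR^d$, then $h$ would have no zero on $\RR^2$; in particular $h$ would be continuous and nowhere vanishing on the simply connected square $[0,1]^2$, so the closed curve $h|_{\partial([0,1]^2)}$ would be null-homotopic in $\CC\setminus\{0\}$ and hence have winding number $0$ about the origin. On the other hand, evaluating this winding number edge by edge with the two relations above: the contributions of the bottom and top edges cancel because $h(s,1)=h(s,0)$, while the contributions of the left and right edges differ by the net change of $\arg e^{-2\pi i t}$ as $t$ runs over $[0,1]$, which is $-2\pi$; hence the winding number equals $-1$. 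This contradiction forces $h$, and therefore $Z_Mf$, to vanish somewhere, as claimed. I expect the only delicate point to be the reduction step — verifying that the restriction to the $2$-plane $\operatorname{span}\{Me_1,M^{-T}e_1\}$ both preserves continuity and yields exactly the classical one-dimensional relations — after which the winding-number argument is identical to the one used for $d=1$ (cf.\ Theorem~4.3.4 in \cite{Hei-Wall-SIAM_Review}).
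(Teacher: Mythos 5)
Your proof is correct, and it executes the argument along a somewhat different route than the paper. The paper works directly on $\RR^d\times\RR^d$: assuming $Z_Mf$ never vanishes, it lifts $Z_Mf/|Z_Mf|$ to a global continuous argument function $\varphi$ (using simple connectedness), turns the quasi-periodicity relations \eqref{qp1}--\eqref{qp2} into integer-valued, hence constant, jump conditions for $\varphi$, and gets a contradiction by evaluating $\varphi(M(e_1),N(e_1))$ along two different orders of shifts; this uses the companion lattice $N(\ZZ^d)$, the integrality of $N^TM$, and tacitly a nonzero diagonal entry of $N^TM$. You instead restrict to the two-plane spanned by $Me_1$ and $M^{-T}e_1$, verify that $h(s,t)=Z_Mf(sMe_1,tM^{-T}e_1)$ obeys the classical one-dimensional relations $h(s+1,t)=e^{-2\pi i t}h(s,t)$ and $h(s,t+1)=h(s,t)$ (via $(M^{-T}e_1)\cdot(Me_1)=1$ and $M^{-T}e_1\in (M(\ZZ^d))^{\perp}$), and then run the standard winding-number computation on $\partial([0,1]^2)$, whose edge-by-edge evaluation you carry out correctly. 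Both proofs rest on the same topological obstruction, but your version has two concrete advantages: it uses only the lattice $M$, which matches the statement of Lemma \ref{Zero-of-Zak} (no hypothesis on $N$ appears there), and pairing $Me_1$ with the dual vector $M^{-T}e_1$ always yields the exact increment $1$, sidestepping the paper's case selection of a nonzero diagonal entry $a_{kk}$ of $N^TM$ (which could in principle fail to exist and would then require an off-diagonal fix). One caveat, shared with the paper: the quasi-periodicity identities derived termwise from \eqref{GZT-definition} hold a priori only almost everywhere, since the series defines $Z_Mf$ in the $L^2$ sense; you should note that they extend to every $(x,\xi)$ by the assumed continuity of $Z_Mf$ before restricting to the two-plane.
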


	\begin{proof}
		Assume, for contradiction, that $Z_Mf$ has no zeros. That is, $Z_Mf(x,\xi) \neq 0$ for all $(x,\xi)\in Q_M\times Q_N$.   Consider the mapping from $\R^d\times \RR^d\to \RR$ given by
		$$
		(x, \xi) \mapsto \cfrac{Z_Mf(x, \xi)}{|Z_Mf(x,\xi)|}.
		$$ 
		The map is well-defined and  continuous. Since   $\Bbb R^d\times \Bbb R^d $ is simply connected, there is a continuous real-valued function  $\varphi: \Bbb R^d\times \Bbb R^d  \to \Bbb R$, also known as  an {\it argument function},   such that 
		\begin{align}\label{argument-function} 
			e^{2\pi i \varphi(x,\xi)} = \cfrac{Z_Mf(x, \xi)}{|Z_Mf(x,\xi)|} \quad \quad  \forall ~ (x,\xi)\in \RR^{d}\times \RR^d,
		\end{align} 
		due to Lemma 7, Sec. VI.1 in \cite{Raeb-Reichelderfer-Book}.
		
		The quasi-periodic properties \eqref{qp1} and \eqref{qp2} of the  Zak transform then imply that  for all $  (x,\xi)\in \Bbb R^d\times\Bbb R^d$ and    $n\in \Bbb Z^d$,    
		\begin{align}\notag
			e^{2\pi i  \varphi(x+M(n), \xi)} &= e^{-2\pi i \xi\cdot M(n)} e^{2\pi i \varphi(x, \xi)}\\\notag
			e^{2\pi i\varphi(x, \xi+N(n))} &=  e^{2\pi i\varphi(x, \xi)}. 
		\end{align} 
		Since $N^TM$ is an integer matrix, there are  integers  $k_{(x,\xi,n)}$ and $\ell_{(x,\xi,n)}$ such that 
		\begin{align}\label{eq:varphi1}   \varphi(x+M(n), \xi)      -  \varphi(x, \xi) +\xi\cdot M(n) &= k_{(x,\xi,n)}\\\label{eq:varphi2}
			\varphi(x, \xi+N(n))  - \varphi(x, \xi) & = \ell_{(x,\xi,n)} .
		\end{align} 
		
		Both \eqref{eq:varphi1} and \eqref{eq:varphi2} equate a continuous function of $x$ and $\xi$ with a discrete function of $x$ and $\xi$.
		This implies that for any $n\in \Bbb Z^d$, $k_n:=k_{(x,\xi, n)}$ and $\ell_n:=\ell_{(x,\xi, n)}$  must be constant for all   $(x, \xi)$.  
		Let ${\bf 0}$ denote  the  zero vector,  and $e_1$ denote
		the first vector in standard basis for $\Bbb R^d$ (or $e_k$, where $a_{kk}$ is the first nonzero diagonal entry of the matrix $N^TM$).
		We can calculate $\varphi(M(e_1),N(e_1))$ in two different ways  using   equations \eqref{eq:varphi1} and \eqref{eq:varphi2}:
		First,  for $x=0$, $n=e_1$ and $\xi=N(e_1)$,
		\begin{align}\notag 
			\varphi(M(e_1),N(e_1)) = 
			\varphi({\bf 0}+M(e_1),N(e_1)) & 
			\overbrace{=}^{by ~ \eqref{eq:varphi1}}\varphi({\bf 0},N(e_1)) +1+k_{e_1}  \overbrace{=}^{by ~ \eqref{eq:varphi2}} \varphi({\bf 0},{\bf 0}) +\ell_{e_1}+  1+k_{e_1}.
		\end{align}
		Secondly, $x=M(e_1)$, $n=e_1$ and $\xi=0$, and therefore
		\begin{align*} 
			\varphi(M(e_1),N(e_1))   =  \varphi(M(e_1), {\bf 0}+N(e_1) ) & \overbrace
			{=}^{by ~ \eqref{eq:varphi2}} \varphi(M(e_1),{\bf 0}) + \ell_{e_1} \overbrace{=}^{by ~ \eqref{eq:varphi1}}\varphi({\bf 0},{\bf 0})+k_{e_1} +\ell_{e_1} .
		\end{align*}

		This is a contradiction because the right-hand sides of the last two equations can never be equal.  Therefore,  the Zak transform must have a zero.   
	\end{proof} 
	%
	

	\begin{example}[Gaussian] 
		Figure \ref{fig:gaussian} shows plots of the one-dimensional Gaussian function $f(x)=e^{-\pi x^2}$ and the magnitude of $Zg$. The Zak transform $Zg$ is a continues function.  By direct computation it is seen that $Zg$ has a zero at $x=1/2, \xi=1/2$. 
	\end{example}

	\section{Characterization of Gabor systems with lattice time-frequency shifts - Proofs of Theorem  \ref{multi-tiling-and-completness} and Theorem  \ref{main-corollary}}

	This section aims to prove that for  given pair of full rank  matrices $M$ and $N$ with $\det(MN)=1$ and $N^TM$ an integer matrix, for a given   multi-tiling set $\Omega$  with respect to lattice $ M(\Bbb Z^d)$, the necessary condition of the Gabor system 
	$ \mathcal G(\chi_{\Omega}, M(\Bbb Z^d)\times N(\Bbb Z^d))$   to be a frame for  $L^2(\Bbb R^d)$ is that $\Omega$ is a tiling. We then conclude that  under the assumptions for $M$ and $N$, any Riesz Gabor basis     $ \mathcal G(\chi_{\Omega}, M(\Bbb Z^d)\times N(\Bbb Z^d))$ is an   orthogonal basis.

	\subsection{Gabor systems for any window and lattice Zak transform}
	We prove the following result which is similar to the result when  the time-frequency shift  domain is in the form of $\alpha \Bbb Z^d\times \beta \Bbb Z^d$, $\alpha\beta\neq 0$.
	\begin{prop}\label{Zak-is-unitary}  
		Assume that  $M$ and $N$ are full rank $d\times d$ matrices  such that $\det(MN)=1$ and $N^TM$ is an integer matrix. 
		Let
		$Z_M$   be  the   Zak transform    given by  (\ref{GZT-definition}) with respect to 
		$M(\Bbb Z^d)$.
		Then  for any $g\in L^2(\Bbb R^d)$   the following holds: 
		\begin{enumerate}[(i)]
			\item   $\mathcal G(g, M(\Bbb Z^d)\times N(\Bbb Z^d))$ is complete  if and only if $Z_M g(x,\xi)\neq 0$ for a.e. $x\in Q_M$ and $\xi\in Q_N$.
			
			\item The following statements are equivalent:
			\begin{enumerate}
				\item There are constants $0<\ell\leq u<\infty$ such that $\ell\leq |Z_Mg(x,\xi)|\leq u$  for. a.e. $x\in Q_M$ and $\xi\in Q_N$. 
				\item   $\mathcal G(g, M(\Bbb Z^d)\times N(\Bbb Z^d))$  is a frame.
				
				\item   $\mathcal G(g, M(\Bbb Z^d)\times N(\Bbb Z^d))$  is a Riesz basis. 
			\end{enumerate}
			\item  $\mathcal G(g, M(\Bbb Z^d)\times N(\Bbb Z^d))$  is an orthogonal basis if and only if $|Z_Mg(x,\xi)|=1$  for a.e. $x\in Q_M$ and $\xi\in Q_N$.  
		\end{enumerate}
	\end{prop}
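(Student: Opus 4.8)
The plan is to transport the problem to the Zak side using Proposition \ref{Zak-transform-isometry}. Since $Z_M : L^2(\RR^d) \to L^2(Q_M \times Q_N)$ is unitary, the Gabor system $\mathcal G(g, M(\ZZ^d) \times N(\ZZ^d))$ is carried to the family $\{Z_M[e^{2\pi i N(m)\cdot t} g(t - M(n))]\}_{n,m}$ in $L^2(Q_M\times Q_N)$. The first task is to compute this image: using the definition \eqref{GZT-definition} together with the quasi-periodicity relations \eqref{qp1}, \eqref{qp2} and the fact that $N^T M$ is an integer matrix, one checks that translation by $M(n)$ and modulation by $N(m)$ on the $L^2(\RR^d)$ side become, on the Zak side, multiplication of $Z_M g(x,\xi)$ by the exponential $e^{-2\pi i x \cdot N^{-T}(\ell)}e^{2\pi i \xi\cdot M^{-T}(k)}$-type characters — that is, multiplication by the elements of the orthogonal exponential basis $\mathcal E(N^{-T}(\ZZ^d) \times M^{-T}(\ZZ^d))$ for $L^2(Q_M\times Q_N)$ (the same basis appearing in the surjectivity argument in Proposition \ref{Zak-transform-isometry}, up to relabeling). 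So the whole question reduces to: when does the family $\{ m_{k,\ell} \cdot Z_M g \}$, where $\{m_{k,\ell}\}$ is a fixed orthonormal basis of exponentials on $Q_M\times Q_N$, form a complete set / frame / Riesz basis / orthonormal basis of $L^2(Q_M\times Q_N)$?

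Once this reduction is in place, each of the three statements follows from a standard multiplier argument. For (i): the multiplication operator $F \mapsto (Z_M g)\cdot F$ has dense range (equivalently, the family $\{m_{k,\ell} Z_M g\}$ is complete) precisely when $Z_M g \neq 0$ almost everywhere on $Q_M \times Q_N$; if $Z_M g$ vanishes on a set of positive measure $E$, then every element of the family is supported off $E$ and $\chi_E$ is orthogonal to all of them. For (ii): writing $F = \sum c_{k,\ell} m_{k,\ell}$ for an arbitrary $F \in L^2(Q_M\times Q_N)$ and using that $\{m_{k,\ell}\}$ is orthonormal, the frame inequalities for $\{m_{k,\ell} Z_M g\}$ read $\ell \|F\|^2 \le \| (Z_M g) F\|^2 \le u \|F\|^2$ for all $F$, which holds iff $\ell \le |Z_M g|^2 \le u$ a.e.; and since the $\{m_{k,\ell} Z_M g\}$ are (after dividing by $Z_M g$ where nonzero) the image of an orthonormal basis under the bounded, boundedly invertible multiplication operator, being a frame is here equivalent to being a Riesz basis — so (a) $\Leftrightarrow$ (b) $\Leftrightarrow$ (c). For (iii): orthonormality of $\{m_{k,\ell} Z_M g\}$ forces $\langle m_{k,\ell} Z_M g, m_{k',\ell'} Z_M g\rangle = \int |Z_M g|^2 \overline{m_{k',\ell'}} m_{k,\ell} = \delta$, i.e. $|Z_M g|^2$ has all its Fourier coefficients with respect to the orthonormal basis $\{m_{k,\ell}\overline{m_{k',\ell'}}\}$ equal to those of the constant $1$, hence $|Z_M g| = 1$ a.e.; conversely $|Z_M g| \equiv 1$ makes the multiplication operator unitary, so it maps an orthonormal basis to an orthonormal basis, and combined with (i) this orthonormal system is complete.

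The main obstacle I anticipate is the bookkeeping in the first step: correctly identifying which characters on $Q_M\times Q_N$ arise from the time shifts $M(n)$ versus the frequency shifts $N(m)$, keeping the roles of $N^TM$ being integral straight, and verifying that as $(n,m)$ ranges over $\ZZ^d\times\ZZ^d$ one recovers \emph{all} of the orthonormal exponential basis of $L^2(Q_M\times Q_N)$ exactly once (this uses $\det(MN)=1$, so that $|Q_M|\,|Q_N| = 1$ and the index sets match up). The estimates afterward are routine, being the standard characterization of Gabor frames via the Zak transform in the rectangular case (cf. Theorem 4.3.4 and surrounding material in \cite{Hei-Wall-SIAM_Review}), transported through the lattice change of variables; the only care needed is to invoke Proposition \ref{Zak-transform-isometry} for the unitarity and Corollary \ref{orthogonal-exponen-basis} for the orthogonality of the relevant exponentials on $Q_N$ (resp. $Q_M$).
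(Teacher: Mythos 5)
Your proposal follows essentially the same route as the paper: transport the Gabor system through the unitary lattice Zak transform (Proposition \ref{Zak-transform-isometry}), note that the time–frequency shifts become multiplication of $Z_Mg$ by a complete orthogonal family of product exponentials on $Q_M\times Q_N$, and read off completeness, the frame/Riesz property, and orthogonality from the multiplier $|Z_Mg|$ — the paper writes this out only for (i) and dismisses (ii)–(iii) as "similar", which your Parseval/multiplier computations correctly supply. One bookkeeping correction to the step you flagged: the image characters are $e^{2\pi i N(m)\cdot x}\,e^{2\pi i M(n)\cdot \xi}$, i.e.\ frequencies from $N(\ZZ^d)$ in the $x$-variable and from $M(\ZZ^d)$ in the $\xi$-variable (the hypotheses $\det(MN)=1$ and $N^TM\in M_d(\ZZ)$ force $N(\ZZ^d)=M^{-T}(\ZZ^d)$, so this family is indeed an orthogonal basis of $L^2(Q_M\times Q_N)$), not $\mathcal E(N^{-T}(\ZZ^d)\times M^{-T}(\ZZ^d))$ as written, which swaps the two roles.
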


	\begin{proof} Here we only prove (i) as the rest can be obtained similarly. As before, take $\Lambda= M(\Bbb Z^d)$ and $\Gamma=N(\Bbb Z^d)$.  Assume that    $Z_Mg\neq 0$ almost everywhere. We prove that the Gabor system is  complete. For this, assume that   there is a function $h\in L^2(\Bbb R^d)$ such that $\langle h, g(\cdot-\lambda)e^{2\pi i \gamma\cdot (\cdot) }\rangle =0$ for all $\lambda\in \Lambda$ and $\gamma\in \Gamma$. Since the Zak transform is a unitary map,
		$\langle Z_Mh, e_\lambda e_\gamma Z_Mg \rangle =  \langle (Z_Mh) (Z_Mg), e_\lambda e_\gamma  \rangle = 
		0.$
		Since   $\{e_\lambda e_\gamma\}_{(\lambda, \gamma)\in \Lambda\times \Gamma}$ is a complete system in $L^2(Q_M \times Q_{N})$, then we must have $(Z_Mg)( Z_Mh) = 0$ a.e. Since $Z_Mg\neq 0$ a.e.,  by the unitary property of $Z_M$, the latter implies that 
		$h=0$ a.e. Thus $\mathcal G(g, M(\ZZ^d)\times N(\ZZ^d))$ is complete. 
		The other direction holds because $g\neq 0$ and $Z_M$ is a unitary map. 
	\end{proof} 
	
	Our next result applies Proposition \ref{Zak-is-unitary} to characteristic functions of multi-tiling sets.
	
	\begin{corollary}\label{completeness-Gabor}  Let $M(\Bbb Z^d)$ and $N(\Bbb Z^d)$ be full rank lattices such that  $\det(MN)=1$ and  $N^TM$ is an integer matrix.
		Assume that 
		$\Omega\subset \RR^d$ is a  multi-tiling set with respect to  $M(\Bbb Z^d)$ at level $k$. Then $\mathcal G(\chi_{\Omega}, M(\Bbb Z^d)\times N(\Bbb Z^d))$ is complete and has a positive upper Riesz bound.
	\end{corollary}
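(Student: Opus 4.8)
The plan is to apply Proposition \ref{Zak-is-unitary}(i) with $g = \chi_\Omega$, so the task reduces to two points: first, that the lattice Zak transform $Z_M\chi_\Omega$ is nonzero almost everywhere on $Q_M\times Q_N$, which gives completeness; and second, that $|Z_M\chi_\Omega(x,\xi)|$ is bounded above by a uniform constant on $Q_M\times Q_N$, which by the equivalence in Proposition \ref{Zak-is-unitary}(ii) (or directly from the Bessel direction of the frame inequality) gives the positive upper Riesz bound. The key computational input is an explicit formula for $Z_M\chi_\Omega$ coming from the multi-tiling hypothesis. By Definition \ref{GZT},
\begin{align*}
Z_M\chi_\Omega(x,\xi) = |Q_M|^{1/2}\sum_{n\in\ZZ^d}\chi_\Omega\big(x+M(n)\big)\,e^{2\pi i\,\xi\cdot M(n)}.
\end{align*}
For a.e. $x$ the set of $n$ with $x+M(n)\in\Omega$ has exactly $k$ elements, say $n_1(x),\dots,n_k(x)$, so this is a finite sum of $k$ unimodular terms. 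Hence $|Z_M\chi_\Omega(x,\xi)|\le k\,|Q_M|^{1/2}$ pointwise, which is the desired uniform upper bound.

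The harder point is showing $Z_M\chi_\Omega(x,\xi)\neq 0$ for a.e. $(x,\xi)$. First I would reduce via the quasi-periodicity relations \eqref{qp1}--\eqref{qp2} to working on $Q_M\times Q_N$, and note that for fixed generic $x$ the function $\xi\mapsto Z_M\chi_\Omega(x,\xi) = |Q_M|^{1/2}\sum_{j=1}^k e^{2\pi i\,\xi\cdot M(n_j(x))}$ is a trigonometric polynomial in $\xi$ with $k$ distinct frequency vectors $M(n_j(x))\in M(\ZZ^d)$, all with unit coefficients. A nonzero trigonometric polynomial vanishes only on a set of measure zero, so for each such $x$ the bad set of $\xi$ is null; by Fubini, $Z_M\chi_\Omega$ is nonzero a.e. on $\RR^d\times\RR^d$. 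The one subtlety to address carefully is that the frequencies $M(n_j(x))$ depend measurably on $x$ and the vanishing set must be handled as a measurable subset of the product space — I would phrase this as: the set $\{(x,\xi): Z_M\chi_\Omega(x,\xi)=0\}$ is measurable, each $x$-slice is null, hence the whole set is null.

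I expect the main obstacle to be purely bookkeeping rather than conceptual: making the dependence of $n_1(x),\dots,n_k(x)$ on $x$ precise enough that the slice-wise triviality of the zero set is rigorous, and confirming that the trigonometric polynomial $\sum_{j=1}^k e^{2\pi i\,\xi\cdot M(n_j(x))}$ is genuinely not identically zero — which follows because the $n_j(x)$ are pairwise distinct, so the frequencies $M(n_j(x))$ are pairwise distinct and the coefficients (all equal to $1$) do not cancel. With those two ingredients in hand, completeness follows from Proposition \ref{Zak-is-unitary}(i) and the positive upper Riesz bound follows from the pointwise bound $|Z_M\chi_\Omega|\le k|Q_M|^{1/2}$ together with Proposition \ref{Zak-is-unitary}(ii)(a)$\Rightarrow$(b)/(c) applied to the upper inequality (equivalently, since a Bessel bound for $\mathcal G(\chi_\Omega,\cdot)$ transfers through the unitary $Z_M$).
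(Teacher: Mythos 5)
Your proposal follows essentially the same route as the paper: use the multi-tiling property to write $Z_M\chi_\Omega(x,\xi)$ as a finite sum of $k$ unimodular exponentials with distinct frequencies $M(n_i(x))$, bound its modulus by $k$ (up to the normalization constant) via the triangle inequality for the Bessel/upper Riesz bound, and invoke Proposition \ref{Zak-is-unitary}(i) for completeness. The only difference is that you spell out the a.e.\ nonvanishing step (a nonzero trigonometric polynomial vanishes on a null set of $\xi$, plus a Fubini argument over $x$), which the paper leaves implicit; this is a welcome clarification, not a different method.
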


	\begin{proof} 
		To prove our claim, it is sufficient to show that for $g=\chi_\Omega$ we have  $$0<|Z_M g (x,\xi)|\leq k \quad  \text{for  a.e.  }   (x,\xi) \in Q_M\times Q_N. $$

		By the multi-tiling property of $\Omega$ by lattice $M(\Bbb Z^d)$,  for a.e.  $x\in \Bbb R^d$, there are  exactly $k$ multi-integers $n_1(x), \cdots, n_k(x) \in \Bbb Z^d$ such that $$x\in\cap_{i=1}^k \Omega+M(n_i(x)),$$
		and there is no other $n$ such that $x\in \Omega+M(n)$. 
		This implies that        for almost every  $(x,\xi)\in  Q_M\times Q_N$  we have  
		\begin{align}\label{zak-of-a-set}
			Z_Mg(x, \xi)
			=  \sum_{ i=1}^k \chi_{\Omega+M(n_i(x))}(x) e^{2\pi i \xi\cdot M (n_i(x))}
			=    \sum_{ i=1} ^k   e^{2\pi i \xi\cdot M(n_i(x))}. 
		\end{align}
		Now the final  result follows by taking the modulus  of $Z_Mg$, using the  triangle inequality, and applying Proposition \ref{Zak-is-unitary} (i). 
	\end{proof}

	Proposition \ref{Zak-is-unitary} can be used more generally to characterize window functions for Gabor frames, Gabor Riesz bases, and complete Gabor systems with spectrum $M(\ZZ^d)\times N(\ZZ^d)$ (specific examples are given later in Section \ref{Eexamples}).
	
	\subsection{Multi-tiling and lattice Zak transform}

	Suppose that $\Omega$ multi-tiles $\RR^d$ by a lattice $\Lambda$ at level $k$.
	
	The following key lemma gives the existence of an open set $D\subset \RR^d$ in the intersection of translates of $\Omega$ by $k$ distinct points in $\Lambda$. 
	\begin{lemma} \label{key-lemma} Let $\Omega$ be a  multi-tiling set  at level   $k\geq 1$  with respect to a  lattice  $M (\ZZ^d)$. Then, for each $x\in Q_M$ there is an open set $D$ containing $x$ and lattice points $\lambda_i=\lambda_i(x) \in M (\ZZ^d), 1\leq i\leq k$, for which $|D|>0$ and
		
		$$D\subset \cap_{i=1}^k \left(\Omega+\lambda_i\right).$$

	\end{lemma}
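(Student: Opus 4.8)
\textbf{Proof proposal for Lemma \ref{key-lemma}.} The plan is to argue by a measure-theoretic density/pigeonhole argument combined with the almost-everywhere finiteness of the multi-tiling relation, followed by a Lebesgue-density step to produce an honest open set. First I would fix $x\in Q_M$ and recall that, by Definition \ref{mlt-sets}, for almost every $y\in\RR^d$ there are exactly $k$ lattice points $\lambda_1(y),\dots,\lambda_k(y)\in M(\ZZ^d)$ with $y\in\cap_{i=1}^k(\Omega+\lambda_i(y))$. Since $M(\ZZ^d)$ is countable, for each finite subset $S=\{\mu_1,\dots,\mu_k\}\subset M(\ZZ^d)$ of size $k$ define the set $A_S:=\{y: y\in\Omega+\mu_i \text{ for all }i, \text{ and } y\notin\Omega+\mu \text{ for }\mu\notin S\}$; then up to a null set $\RR^d=\bigsqcup_{|S|=k}A_S$, a countable disjoint union.

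Next I would localize near $x$. Choose a small ball $B=B(x,r)$; since $|B|>0$ and $B$ differs from $\bigsqcup_{|S|=k}(A_S\cap B)$ by a null set, at least one $A_S\cap B$ has positive measure, say for $S=\{\lambda_1,\dots,\lambda_k\}$. In particular $|{(\Omega+\lambda_1)}\cap\cdots\cap(\Omega+\lambda_k)\cap B|>0$. Now apply the Lebesgue density theorem to the set $E:=\cap_{i=1}^k(\Omega+\lambda_i)$: since $|E\cap B|>0$, $E$ has a density point $z\in B$, and at a density point the set contains, up to null sets, a full-measure portion of every sufficiently small ball around $z$. This does not literally give an \emph{open} subset of $E$, so the statement as phrased should be read (and I would reformulate it) as: there is an open set $D$ with $|D\cap E|$ arbitrarily close to $|D|$, equivalently $|D\setminus E|$ as small as we like — or, if $\Omega$ is assumed open or is taken to be a fixed representative with a reasonable boundary, one genuinely gets an open $D\subset E$. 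The natural repair, which I expect the authors intend, is to take $D$ to be a small ball around the density point $z$ and note $|D\setminus\cap_{i=1}^k(\Omega+\lambda_i)|$ can be made less than $\eps|D|$; this suffices for every downstream application to the Zak transform (integrals over $D$).

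The main obstacle is precisely this gap between "positive-measure intersection" and "open subset": a measurable set of positive measure need not contain any open set (e.g. a fat Cantor set). So the honest proof either (i) weakens the conclusion to an \emph{almost-open} set $D$ with $|D\cap\cap_i(\Omega+\lambda_i)|\geq(1-\eps)|D|$, obtained from the Lebesgue density theorem as above, or (ii) invokes an additional regularity hypothesis on $\Omega$ (open, or Jordan measurable) under which density points of $\Omega$ are interior points. I would proceed with route (i), since the later use of Lemma \ref{key-lemma} only needs to integrate $Z_M\chi_\Omega$ — which on such a $D$ equals $\sum_{i=1}^k e^{2\pi i\xi\cdot\lambda_i}$ away from an $\eps$-small set, by the computation in \eqref{zak-of-a-set} — and does not require strict containment. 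The remaining steps (covering $Q_M$ by such balls, extracting the lattice points $\lambda_i(x)$, checking $|D|>0$) are then routine.
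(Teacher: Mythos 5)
Your route is genuinely different from the paper's, and it puts its finger on the real issue. The paper's proof is purely topological: it begins by declaring ``without loss of generality, assume that $\Omega$ is an open set,'' after which the lemma is nearly immediate --- $x$ lies in exactly $k$ translates $\Omega+\lambda_i(x)$, a finite intersection of open sets is open, so some neighborhood $N_x$ of $x$ is contained in $\cap_{i=1}^k(\Omega+\lambda_i(x))$; a short contradiction argument (any further translate meeting $N_x$ would force a positive-measure set to be covered $k+1$ times) shows in addition that $D:=N_x$ meets no other translate, a fact used later in Theorem \ref{zero-for-Zak}. Your decomposition into the sets $A_S$, the pigeonhole step, and the Lebesgue density argument recover essentially the same local picture by measure theory alone, and you are right that for a merely measurable $\Omega$ one cannot upgrade ``positive-measure intersection'' to ``contains an open set.'' In effect, your route (ii) \emph{is} the paper's proof: the entire content of the paper's WLOG is exactly the regularity you ask for (and that WLOG is itself a genuine restriction, since a measurable multi-tile need not agree a.e.\ with any open set --- fat-Cantor-type fundamental domains exist). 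Your version also only needs the covering property at a.e.\ point near $x$ rather than at $x$ itself, which is slightly more robust than the paper's phrasing.

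Where I would push back is your claim that the weakened conclusion (i) --- an open $D$ with $|D\setminus\cap_{i=1}^k(\Omega+\lambda_i)|<\eps|D|$ --- suffices for every downstream application. The lemma is not used to integrate $Z_M\chi_\Omega$ over $D$: it is used in Theorem \ref{zero-for-Zak} to show that $Z_M\chi_\Omega$ is \emph{continuous} on $Q_M\times Q_N$, because on a whole neighborhood of $x$ the transform equals the fixed finite sum $\sum_{i=1}^k e^{2\pi i \xi\cdot\lambda_i}$, and this continuity is what feeds the quasi-periodicity/winding argument of Lemma \ref{Zero-of-Zak} that produces the zero. An identification valid only up to an $\eps$-small exceptional set does not give continuity, and the zero cannot be extracted from the local formula alone: a sum of $k$ distinct lattice characters need not vanish anywhere (for instance $1+e^{2\pi i\xi}+e^{6\pi i\xi}$ has no zeros), so the zero genuinely comes from the global topological argument, which requires continuity everywhere. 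Hence, if you want to dispense with the openness assumption, weakening this lemma is not enough; you would have to replace the continuity-based argument of Theorem \ref{zero-for-Zak} (and thus of Theorem \ref{multi-tiling-and-completness}) by a different, measure-theoretic one.
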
 
	\begin{proof} 
		Without loss of generality, assume that $\Omega$ is an open set.    Let  $Q_M$ denote a fundamental domain of the lattice,  and  
		let $x\in Q_M$. By the $k-$tiling property there are exactly $k$ lattice points  $\lambda_i(x)\in M (\ZZ^d)$ such that $x\in \cap_{i=1}^k (\Omega+\lambda_i(x))$. 
		Since the intersection of finitely many open sets is an open set, there exists an open neighborhood $N_x\subset Q_M$ of $x$ such that $ N_x\subset \cap_{i=1}^k   \left(\Omega+\lambda_i(x)\right)$. 
		
		Now, assume for contradiction that there is a $\lambda\in M (\ZZ^d) \backslash\{\lambda_i(x)\}_{i=1}^{k}$ such that $N_x\cap (\Omega+\lambda)  \neq \emptyset$. Let $t\in N_x\cap (\Omega+\lambda)$. Since $N_x$ is open it contains a neighborhood $N_{x,t}\subset N_x$ of $t$ such that $N_{x,t}\subset  \Omega+\lambda$.
		This implies that almost every point of $N_{x,t}$ is covered by at least $k+1$
		translations of   $\Omega$ by the set $M (\ZZ^d)$, contradicting the $k$-tiling property of $\Omega$. 
		Taking $D:=N_x$ and $\lambda_i = \lambda_i(x)$, we have that $D\subset Q_M$ and  $D\subset \cap_{i=1}^k (\Omega+\lambda_i)$. This completes the proof. 
	\end{proof}

	\begin{theorem}\label{zero-for-Zak} Assume that  $M$ and $N$ are full rank $d\times d$ matrices  such that $\det(MN)=1$ and $N^TM$ is an integer matrix.
		Let    $g=\chi_\Omega$, where  $\Omega$ is a multi-tiling set at level $k\geq 1$ with respect to the lattice $\Lambda=M(\Bbb Z^d)$. If $k>1$,  then $Z_Mg$ given as in  \eqref{GZT-definition} has a zero in $Q_M\times Q_N$. 
	\end{theorem}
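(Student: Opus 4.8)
The plan is to localise to a single patch on which the multi-tiling pattern is constant, reduce the lattice Zak transform there to a finite exponential sum in the frequency variable alone, and then exhibit a zero of that sum.

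By Lemma~\ref{key-lemma}, fix any $x_{0}\in Q_{M}$ and obtain an open set $D$ with $|D|>0$, $D\subseteq Q_{M}$, and distinct lattice points $\lambda_{1},\dots,\lambda_{k}\in M(\ZZ^{d})$ such that $D\subseteq\bigcap_{i=1}^{k}(\Omega+\lambda_{i})$ and $D$ meets no other translate $\Omega+\lambda$. Then for every $x\in D$ the series defining $Z_{M}g$ has exactly $k$ nonzero terms, and by the computation \eqref{zak-of-a-set} (up to the nonzero normalising factor $|Q_{M}|^{1/2}$)
\[
Z_{M}g(x,\xi)\;=\;|Q_{M}|^{1/2}\sum_{i=1}^{k}e^{2\pi i\,\xi\cdot\lambda_{i}}\;=:\;P(\xi),
\]
which depends on $\xi$ only and is continuous. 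It thus suffices to find $\xi_{0}\in Q_{N}$ with $P(\xi_{0})=0$: then $Z_{M}g$ vanishes on the slice $D\times\{\xi_{0}\}$, and for every $\varepsilon>0$ it has modulus $<\varepsilon$ on the positive-measure set $D\times\{\xi:|P(\xi)|<\varepsilon|Q_{M}|^{-1/2}\}$, so $\operatorname{ess\,inf}_{Q_{M}\times Q_{N}}|Z_{M}g|=0$ — the form in which this statement is used, via Proposition~\ref{Zak-is-unitary}(ii), in the proof of Theorem~\ref{multi-tiling-and-completness}.

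To put $P$ in standard form, write $\lambda_{i}=M(m_{i})$ with $m_{i}\in\ZZ^{d}$ distinct and substitute $\eta=M^{T}\xi$. Since $\det(M^{T}N)=1$ and $N^{T}M$ has integer entries, $M^{T}N\in GL_{d}(\ZZ)$, hence $M^{T}(N(\ZZ^{d}))=\ZZ^{d}$ and $M^{T}Q_{N}$ is a fundamental domain of $\ZZ^{d}$; in these coordinates $P$ is, up to the constant $|Q_{M}|^{1/2}$, the trigonometric polynomial $\sum_{i=1}^{k}e^{2\pi i\,\eta\cdot m_{i}}$ on $\mathbb{T}^{d}=\RR^{d}/\ZZ^{d}$, with $k\ge 2$ distinct integer frequencies and all coefficients equal to $1$. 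It remains to show that every such polynomial has a zero.

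For $k=2$ this is immediate: $e^{2\pi i\,\eta\cdot m_{1}}+e^{2\pi i\,\eta\cdot m_{2}}=e^{2\pi i\,\eta\cdot m_{1}}\bigl(1+e^{2\pi i\,\eta\cdot(m_{2}-m_{1})}\bigr)$, and $\{\eta:\eta\cdot(m_{2}-m_{1})\in\ZZ+\tfrac12\}$ is nonempty and $\ZZ^{d}$-periodic, so it meets $M^{T}Q_{N}$ and $P$ has a zero in $Q_{N}$. For general $k$ I expect this to be the main obstacle, since an isolated exponential sum with unit coefficients need not vanish (for instance $1+z+z^{3}$ has no zero on $|z|=1$); the argument must use that all the local patterns arise from one multi-tiling set. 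Concretely I would exploit the decomposition of $\Omega$ (modulo null sets) as a disjoint union of $k$ fundamental domains $E_{1},\dots,E_{k}$ of $M(\ZZ^{d})$ and track how the pattern $(m_{1}(x),\dots,m_{k}(x))$ varies as $x$ runs over $Q_{M}$, trying to locate a patch whose pattern forces a zero — e.g.\ an arithmetic progression, for which $P$ is a geometric sum that visibly vanishes — or else run a winding-number argument: restrict $P$ to a closed loop $t\mapsto tN(e_{j})$, $t\in[0,1]$, on which it reduces to a one-variable exponential sum $\sum_{i}e^{2\pi i t\,(N^{T}Mm_{i})_{j}}$ with integer frequencies, and derive a contradiction from $P$ being zero-free on all such loops using the quasi-periodicity relations \eqref{qp1}--\eqref{qp2}, in the spirit of the proof of Lemma~\ref{Zero-of-Zak}.
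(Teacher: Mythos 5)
There is a genuine gap, and you essentially flag it yourself. Your reduction fixes a single patch $D$ from Lemma~\ref{key-lemma} and then tries to find $\xi_0\in Q_N$ with $P(\xi_0)=0$ for the one local pattern $P(\xi)=\sum_{i=1}^k e^{2\pi i \xi\cdot\lambda_i}$. That step cannot work in general: a unit-coefficient exponential sum with $k$ distinct integer frequencies need not vanish (your own example $1+z+z^3$ on $|z|=1$), and nothing in the multi-tiling hypothesis forbids a given patch from carrying exactly such a pattern. So the zero of $Z_M\chi_\Omega$ cannot be sought over a fixed slice $D\times Q_N$; beyond $k=2$ your argument is an acknowledged sketch, not a proof. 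Your fallback idea (a winding argument on loops $t\mapsto tN(e_j)$ applied to $P$) is in the right spirit but is miscalibrated for the same reason: the quasi-periodicity relation \eqref{qp1} shifts $x$ by $M(n)$ and hence leaves the patch $D$, so the contradiction cannot be extracted from the single-variable function $P$ alone; both variables must be allowed to move.

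The paper's proof takes precisely the route you only gesture at in the last sentence. It applies Lemma~\ref{key-lemma} at \emph{every} $x\in Q_M$ to conclude that near each point the frequencies $M(n_i(x))$ are locally constant, so that $(x,\xi)\mapsto Z_M\chi_\Omega(x,\xi)$ is locally a finite exponential sum as in \eqref{zak-of-a-set} and hence continuous on $Q_M\times Q_N$; it then invokes Lemma~\ref{Zero-of-Zak}, the global argument-function/quasi-periodicity argument using \eqref{qp1}--\eqref{qp2} and the integrality of $N^TM$, to produce a zero somewhere in $Q_M\times Q_N$. The zero delivered by that topological argument is not attached to any one patch, which is exactly the freedom your single-patch reduction surrenders. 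To repair your write-up, replace the attempt to zero out $P$ by the observation that the local-constancy of the pattern gives joint continuity of $Z_M\chi_\Omega$, and then cite Lemma~\ref{Zero-of-Zak} directly (your correct $k=2$ computation then becomes superfluous).
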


	\begin{proof} 
		As before,  $Q_M$ denote  a fundamental domain of the  lattice $ \Lambda=M(\Bbb Z^d)$. 
		By Lemma \ref{key-lemma}, for any $x\in Q_M$  there exists an open set  $x\in D\subset Q_M$ and lattice points  $\{\lambda_i\}_{i=1}^k\subset \Lambda$ such that  $D\subset \cap_{i=1}^k (\Omega+\lambda_{i})$, and $k$ is the maximum number of lattice points $\lambda_i$   such that    $D$ intersects  $\Omega+\lambda$.  
		Thus for  a.e. $\xi\in Q_N$
		we have 
		\begin{align}\label{Zak-mlt}
			Z_Mg(x, \xi)   =  \sum_{n\in \Bbb Z^d} \chi_{\Omega+M(n)}(x) e^{2\pi i M(n)\cdot\xi}  
			=  \sum_{i=1}^k \chi_{\Omega+M(n_i)}(x) e^{2\pi i M(n_i) \cdot \xi}
			=  \sum_{i=1}^k   e^{2\pi i M(n_i) \cdot \xi}
			.
		\end{align}

		Therefore, the function $Zg$ is continuous in   $Q_M\times Q_N$,  
		and by Lemma \ref{Zero-of-Zak}, its zero set is not  empty.
	\end{proof} 
	{\it  Remark:} Notice that the  continuity of the Zak transform in Theorem \ref{zero-for-Zak} is directly  obtained from the fact that for any point $(x_0,\xi_0)$ and small neighborhood of it, the lattice points $M(n_i)$ are same.  Indeed, let $(x_n,\xi_n)\to (x_0,\xi_0)$. Then for any $\epsilon>0$   there is an $N_0>0$ such that for any $n>N_
	0$,  $|(x_n,\xi_n)-(x_0,\xi_0)|<\epsilon$, by Lemma \ref{key-lemma}  the  set of finite lattice points are the same: For all $n>N_0$,  $\{M(n_i(x_n))\}_{i=1}^k = \{M(n_i(x_0))\}_{i=1}^k$. \\

	\subsection{Proofs of Theorem  \ref{multi-tiling-and-completness} and Theorem  \ref{main-corollary}}

	The Fuglede  Conjecture asserts that any tiling set in $\Bbb R^d$ admits an orthogonal basis of exponential forms and vice versa. In 1974, Fuglede proved the assertion under the lattice assumption. Indeed, he proves that if (T)  $\Omega$ tiles $\Bbb R^d$ by translation of a lattice $\Lambda$, then  (S) $\mathcal E(\Lambda^\perp)$ is an orthogonal exponential basis for $L^2(\Omega)$ and vice versa.    Below, in Theorem \ref{Fugled-Conj} we prove that if the set tiles by a lattice, then it has exponential basis with respect to a class of certain lattices. This can be considered as a   slight {\it extension} of Fuglede's result   for lattices in  direction (T) $\Rightarrow $ (S). 
	\begin{theorem}[$T\Rightarrow  S$ direction of Fuglede Conjecture holds  for two lattices]\label{Fugled-Conj}
		Let $M(\Bbb Z^d)$ and $N(\Bbb Z^d)$ be full rank lattices  such that $\det(MN)=1$ and  $N^TM$ is an integer matrix. 
		Let   $\Omega\subset \Bbb R^d$ be a domain with   finite and positive measure. If   $\Omega$ tiles $\Bbb R^d$ by $M(\Bbb Z^d)$,   then  $\mathcal E(N(\Bbb Z^d))$ is an orthogonal exponential basis for $L^2(\Omega)$.
	\end{theorem}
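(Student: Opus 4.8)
The plan is to transfer the orthogonal basis property from a \emph{common fundamental domain} of $M(\ZZ^d)$ and $N^{-T}(\ZZ^d)$ — on which $\mathcal E(N(\ZZ^d))$ is already an orthogonal basis by Corollary \ref{orthogonal-exponen-basis} — over to $\Omega$, exploiting the fact that $\Omega$ is \emph{also} a fundamental domain of $M(\ZZ^d)$. The bridge between the two sets is a measure-preserving rearrangement built from the shared $M(\ZZ^d)$-tiling structure, and the heart of the matter is that the Gabor-compatibility hypothesis ``$N^{T}M$ is an integer matrix'' makes exponentials with frequencies in $N(\ZZ^d)$ completely insensitive to that rearrangement.

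First I would fix a set $Q$ that is a common fundamental domain for $M(\ZZ^d)$ and $N^{-T}(\ZZ^d)$: this exists by Theorem \ref{thm:Han-Wang-Thm}, since $\det(MN)=1$ forces $\mathrm{Vol}(M(\ZZ^d))=|\det M|=|\det N|^{-1}=\mathrm{Vol}(N^{-T}(\ZZ^d))$, and by Corollary \ref{orthogonal-exponen-basis} the family $\mathcal E(N(\ZZ^d))$ is an orthogonal basis for $L^2(Q)$. Now both $Q$ and $\Omega$ tile $\RR^d$ by $M(\ZZ^d)$, so for a.e.\ $x\in Q$ there is a unique $m(x)\in\ZZ^d$ with $x+M(m(x))\in\Omega$; set $\tau(x):=x+M(m(x))$. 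On each level set $\{m(x)=m\}$ the map $\tau$ is a translation by $M(m)$, these level sets partition $Q$ up to a null set, and their $\tau$-images partition $\Omega$ up to a null set; hence $\tau\colon Q\to\Omega$ is (a.e.) a measure-preserving bijection and $U\colon L^2(\Omega)\to L^2(Q)$, $(Uf)(x):=f(\tau(x))$, is a unitary operator.

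The key computation is the action of $U$ on the exponentials of interest. Writing $\gamma=N(p)$ with $p\in\ZZ^d$,
\[
(U e_\gamma)(x)=e^{2\pi i\,\gamma\cdot(x+M(m(x)))}=e^{2\pi i\,\gamma\cdot x}\,e^{2\pi i\,p^{T}(N^{T}M)\,m(x)}=e^{2\pi i\,\gamma\cdot x},
\]
because $N^{T}M$ has integer entries and $p,m(x)\in\ZZ^d$, so the exponent $p^{T}(N^{T}M)m(x)$ is an integer. Thus $U$ carries $\{e_\gamma\}_{\gamma\in N(\ZZ^d)}\subset L^2(\Omega)$ \emph{onto} $\{e_\gamma\}_{\gamma\in N(\ZZ^d)}\subset L^2(Q)$; being the image of an orthogonal basis of $L^2(Q)$ under a surjective unitary, $\mathcal E(N(\ZZ^d))$ is an orthogonal basis of $L^2(\Omega)$, which is the claim. (Alternatively, one can route through the Zak transform: $\Omega$ tiling by $M(\ZZ^d)$ makes $|Z_M\chi_\Omega|$ constant a.e.\ on $Q_M\times Q_N$, so $\mathcal G(\chi_\Omega,M(\ZZ^d)\times N(\ZZ^d))$ is a Gabor orthogonal basis for $L^2(\RR^d)$ by Proposition \ref{Zak-is-unitary}, and then \ref{Han-theorem} gives the conclusion; the direct transfer above is cleaner and avoids invoking Han--Wang in reverse.)

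I expect the only points requiring genuine care to be (i) verifying that $\tau$ is honestly an a.e.\ measure-preserving bijection — the usual fundamental-domain bookkeeping, where one must keep track of the null sets along $\partial Q$, $\partial\Omega$ and the overlaps — and (ii) pairing $\gamma$ with $M(m)$ in the right order so that it is precisely the hypothesis ``$N^{T}M$ integer'' (equivalently $M^{T}N$ integer) that annihilates the phase factor; everything else is formal once these are in place.
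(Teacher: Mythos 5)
Your proof is correct, but it takes a genuinely different route from the paper. The paper proves this theorem through its Zak-transform machinery: tiling at level $k=1$ makes $|Z_M\chi_\Omega|=1$ a.e.\ on $Q_M\times Q_N$ (the computation of Corollary \ref{completeness-Gabor} specialized to $k=1$), Proposition \ref{Zak-is-unitary}(iii) then upgrades $\mathcal G(\chi_\Omega, M(\ZZ^d)\times N(\ZZ^d))$ to a Gabor orthogonal basis of $L^2(\RR^d)$, and finally the separable Han--Wang-type characterization from \cite{LM19} is invoked ``in reverse'' to conclude that $\mathcal E(N(\ZZ^d))$ is an orthogonal basis for $L^2(\Omega)$ --- i.e.\ exactly the parenthetical alternative you mention and set aside. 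Your main argument instead stays entirely inside $L^2$ of the two domains: you take a common fundamental domain $Q$ of $M(\ZZ^d)$ and $N^{-T}(\ZZ^d)$ (Theorem \ref{thm:Han-Wang-Thm}, Corollary \ref{orthogonal-exponen-basis}), build the a.e.\ measure-preserving piecewise-translation $\tau:Q\to\Omega$ from the two $M(\ZZ^d)$-tilings, and observe that the resulting unitary $U$ fixes each $e_\gamma$, $\gamma\in N(\ZZ^d)$, precisely because $\gamma\cdot M(m)=p^T(N^TM)m\in\ZZ$; the orthogonal-basis property then transports from $Q$ to $\Omega$. This buys a more elementary and self-contained proof: it needs neither the isometry of the lattice Zak transform (Proposition \ref{Zak-transform-isometry}), nor Proposition \ref{Zak-is-unitary}, nor the external characterization of separable Gabor orthogonal bases, and it isolates the exact role of the hypothesis ``$N^TM$ integer'' as killing the phase $e^{2\pi i\gamma\cdot M(m)}$. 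What the paper's route buys is economy within its own framework --- the Zak computation \eqref{zak-of-a-set} is already in place for the multi-tiling results, so the tiling case falls out with two citations --- and it simultaneously records the stronger intermediate fact that the Gabor system itself is an orthogonal basis, which your transfer argument does not directly produce (though it follows afterwards from Han--Wang). Your two points of care, the null-set bookkeeping for $\tau$ and the ordering $\gamma\cdot M(m)=p^TN^TMm$, are indeed the only delicate spots, and both check out.
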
 
	\begin{proof} 
		Let $g=\chi_\Omega$. If $\Omega$ is a tiling set, by a similar calculation as in the proof of Corollary \ref{completeness-Gabor} we can show that     $|Z_Mg|=1$ a.e.  on  $Q_M\times Q_N$.  Thus by   Proposition \ref{Zak-is-unitary} the  system $\mathcal G(\chi_{\Omega}, M(\Bbb Z^d)\times N(\Bbb Z^d))$ is an orthogonal basis. The  Gabor basis is separable, thus by a result in \cite{LM19}    $\mathcal E(N(\Bbb Z^d))$ is an orthogonal basis for $L^2(\Omega)$.
	\end{proof} 
	
	Now we are ready to prove our two main results here. 
	
	\begin{proof}[Proof of Theorem \ref{multi-tiling-and-completness}]  
		
		If $\Omega$ multi-tiles $\RR^d$ by $M(\Bbb Z^d) $ at level $k>1$,    then by Theorem \ref{zero-for-Zak}  the Zak transform   has a zero, therefore  it can not  be  bounded from below. This contradicts the frame assumption of the Gabor system. Therefore it has to tile. Since $\Omega$  tiles with the  lattice $\Lambda$, then by Theorem \ref{Fugled-Conj}  the exponential set $\mathcal E(N(\Bbb Z^d))$ form an  orthogonal basis for $L^2(\Omega)$. This completes the proof. 
	\end{proof}

	\begin{proof}[Proof of  Theorem \ref{main-corollary}] 
		We only prove that if 
		$\mathcal G(\chi_{\Omega}, M(\Bbb Z^d)\times N(\Bbb Z^d))$ is frame, then the orthogonality holds. Indeed, by the lower frame bound and   
		Theorem \ref{multi-tiling-and-completness},  the set $\Omega$ must tile $\Bbb R^d$ by $M(\Bbb Z^d)$,  and  by Theorem \ref{Fugled-Conj} the exponentials   set  $\mathcal E(N(\Bbb Z^d))$ is an orthogonal basis  for $L^2(\Omega)$. Therefore,  $ \mathcal G(\chi_{\Omega}, M(\Bbb Z^d)\times N(\Bbb Z^d))$ is an orthogonal basis \cite{LM19}. 
	\end{proof}

	\subsection{Illustration of zeros of some  lattice Zak transforms for multi-tiling sets}\label{Eexamples}
	To illustrate the result of  Theorem \ref{zero-for-Zak}  and  Lemma \ref{Zero-of-Zak}, 
	in this section, we find zeros of lattice Zak transform of window functions sets both analytically and graphically. 
	In the following examples, Theorem \ref{zero-for-Zak} guarantees that the Zak transform $Z\chi_{\Omega}$ is a continuous function and its zero set is not empty when  $\Omega$ is a multi-tiling domain. MATLAB code that generates the 2D plots is given in Section \ref{sec:code}.

	\begin{example}[Domains that tile]  Let  $\Omega =[0,1]$  and $M=1/k$, $N=k$ for an integer $k>1$. For $M=1, N=1$ (Figure \ref{fig:tilinga}), $\Omega$  tiles $\RR$ by $\ZZ$ and the Zak Transform of $\chi_{[0,1]}$ is identically equal to one in the interior of $Q_M\times Q_N$. For $k=2$ (Figure \ref{fig:tilingb}) the Zak transform of $\chi_{[0,1]}$ for $x\in Q_M$ is given by $ \sqrt{2}Z_{M}\chi_{[0,1]}(x,\xi)= \sum_{n\in \Bbb Z} f(x+\frac{1}{2} n) e^{\pi i \xi n}=\left( 1+ e^{\pi i \xi}\right)$. Direct computation shows that $Z_{1/2}\chi_{[0,1]}=0$ for $\xi\in 2 \Bbb Z+ 1$. Figure \ref{fig:tilingc} shows plots of a tiling domain $\Omega$ and $Z{\chi_{\Omega}}\equiv 1$.

		\begin{figure}[h!]
			\centering
			\begin{subfigure}{0.3\textwidth}
				\centering
				\caption{$\Omega$ tiles $\RR$ by $\ZZ$ }
				\parbox[position][.8\linewidth][c]{.8\linewidth}{\centering
					\includegraphics[width=.8\linewidth]{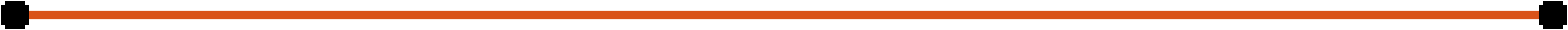}}\\
				
				$|Z\chi_\Omega|$\\
				\includegraphics[width=\linewidth]{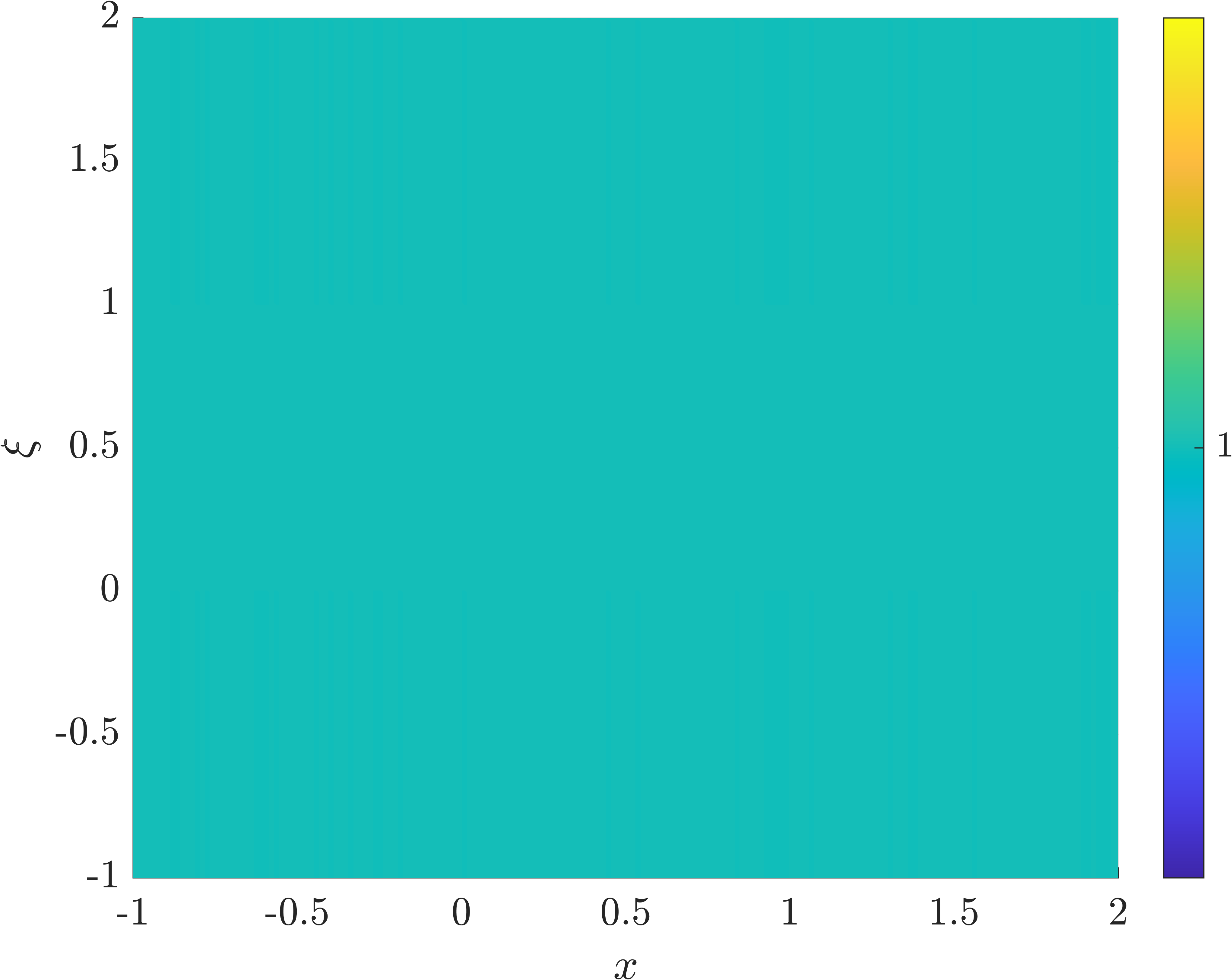} 
				\label{fig:tilinga}
			\end{subfigure}\hfill 
			\begin{subfigure}{0.3\textwidth}
				\centering
				\caption{$\Omega$ 2-tiles $\RR$ by  $ 2^{-1}\ZZ$}
				\parbox[position][.8\linewidth][c]{.8\linewidth}{\centering
					\includegraphics[width=.8\linewidth]{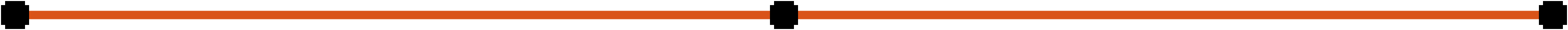}}\\
				$|Z\chi_\Omega|$\\
				\includegraphics[width=\linewidth]{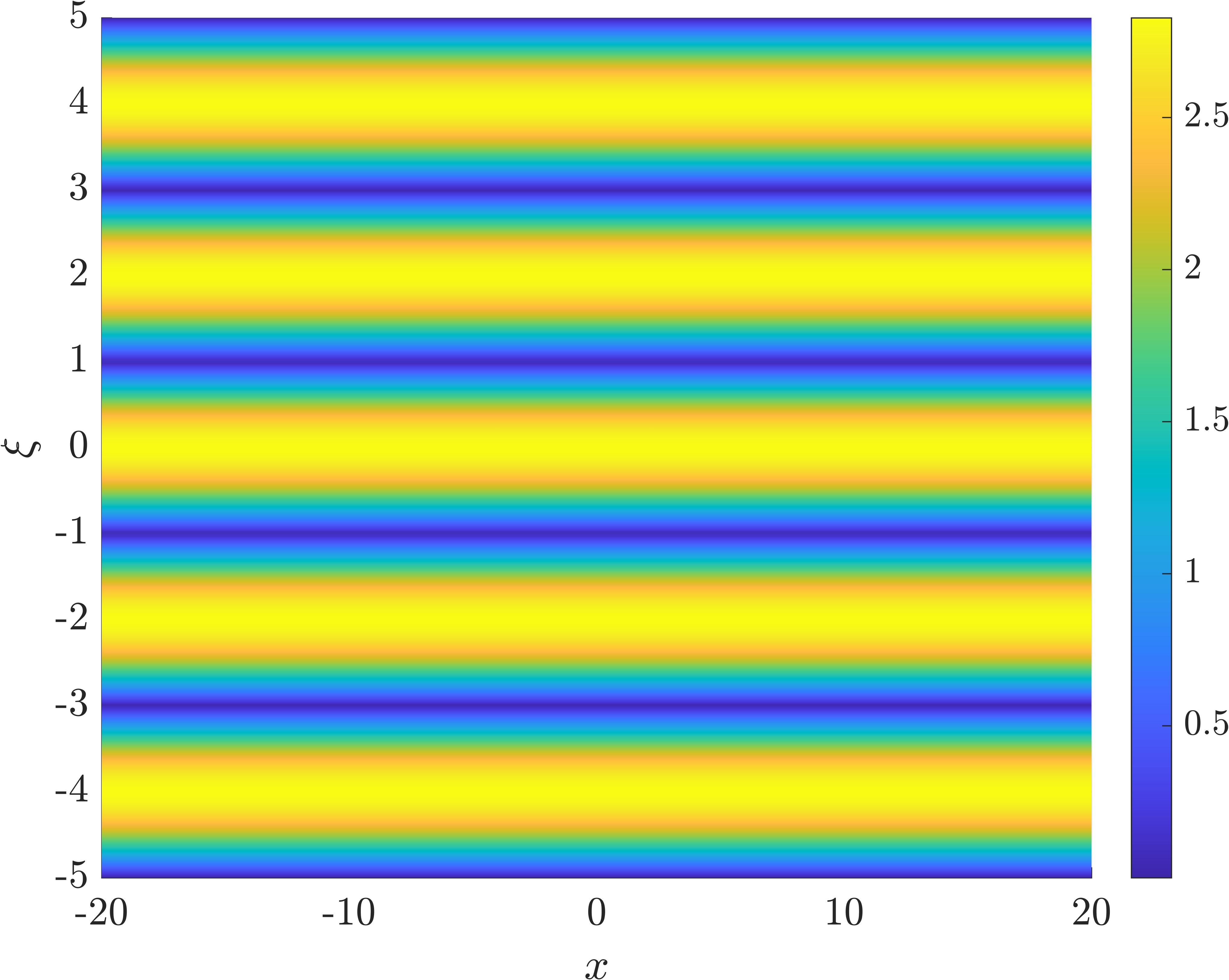}
				\label{fig:tilingb}
			\end{subfigure}\hfill 
			\begin{subfigure}{0.3\textwidth}
				\centering
				\caption{$\Omega$ tiles $\RR^2$ by $\ZZ^2$}
				\parbox[position][.8\linewidth][c]{.8\linewidth}{\centering
					\includegraphics[width=.8\linewidth]{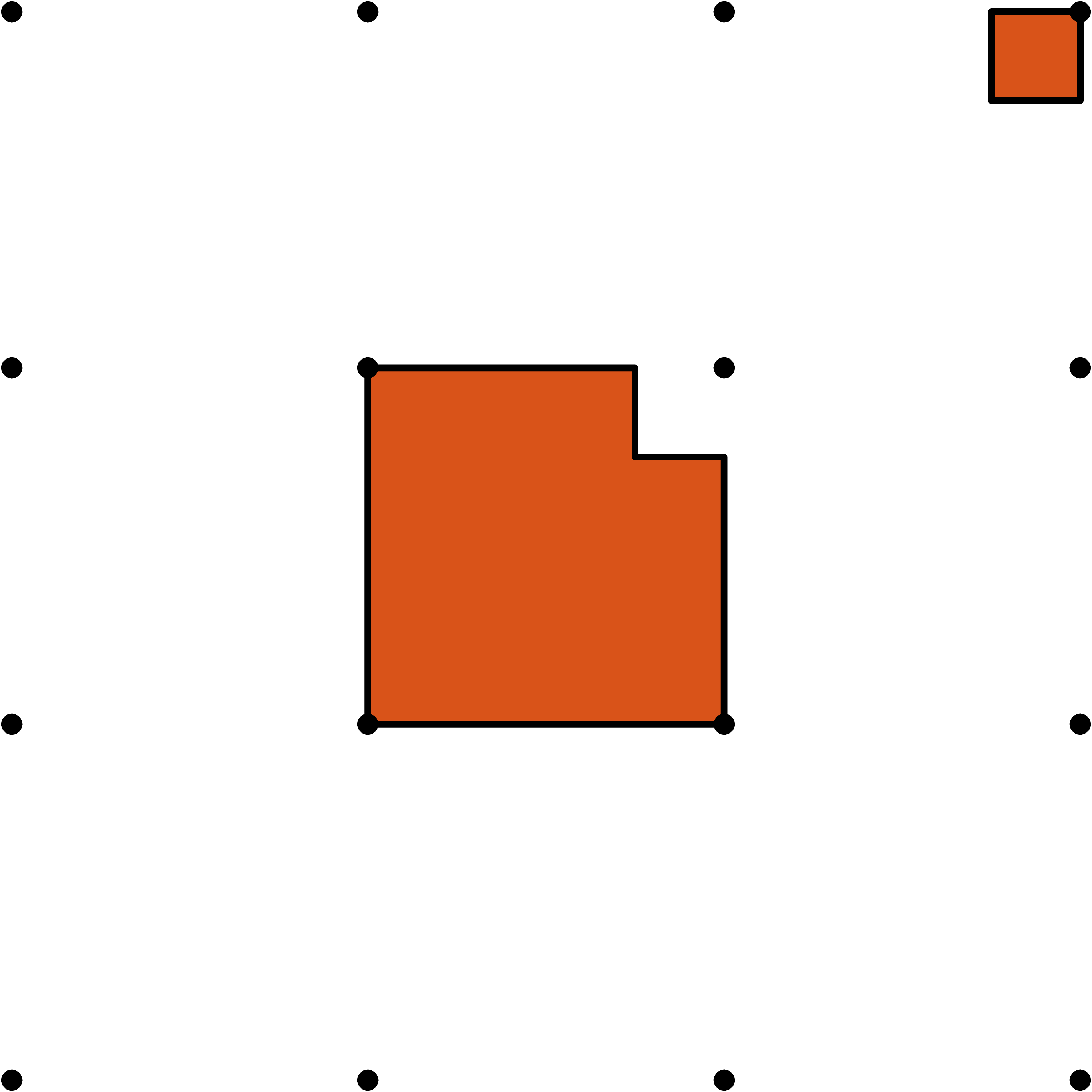}}\\
				$|Z\chi_\Omega|$\\
				\includegraphics[width=\linewidth]{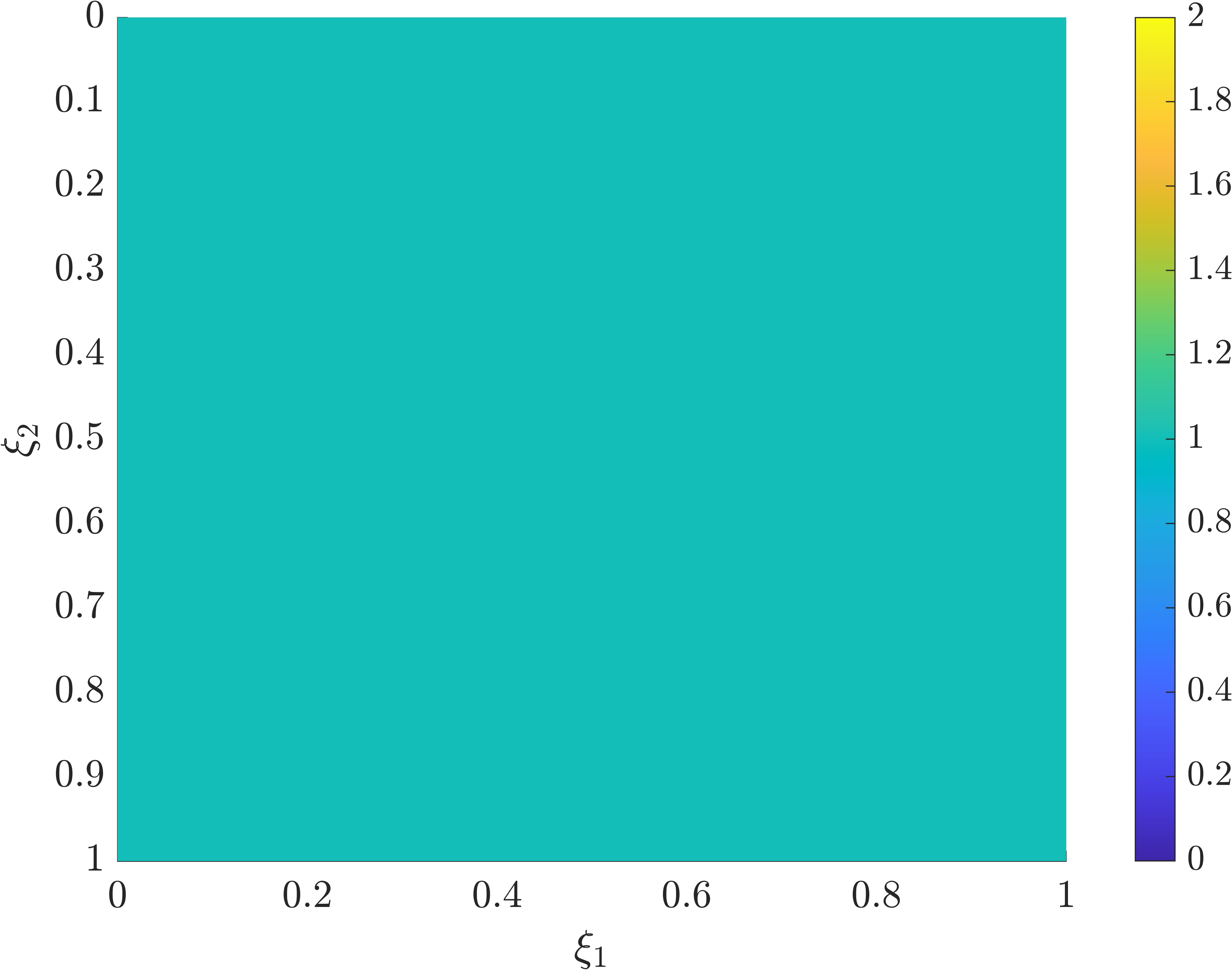}
				\label{fig:tilingc}
			\end{subfigure}
			\caption{\small Tiling and multi-tiling domains (top) and plots of the magnitude of the Zak Transform $|Z_{M}{\chi_{\Omega}}(x, \xi)|$ (bottom), for (a) $M=1$, (b) $M=1/2$, and (c) $M=\begin{pmatrix} 1 & 0 \\ 0& 1\end{pmatrix}$. The illustrations on bottom  are made for  fixed $x$.}   \end{figure}
	\end{example}

	\begin{figure}[h!]
		\centering
		\begin{subfigure}{0.3\textwidth}
			\centering
			\caption{Parallelogram $P$ ($k=2$)}
			\includegraphics[width=.8\linewidth]{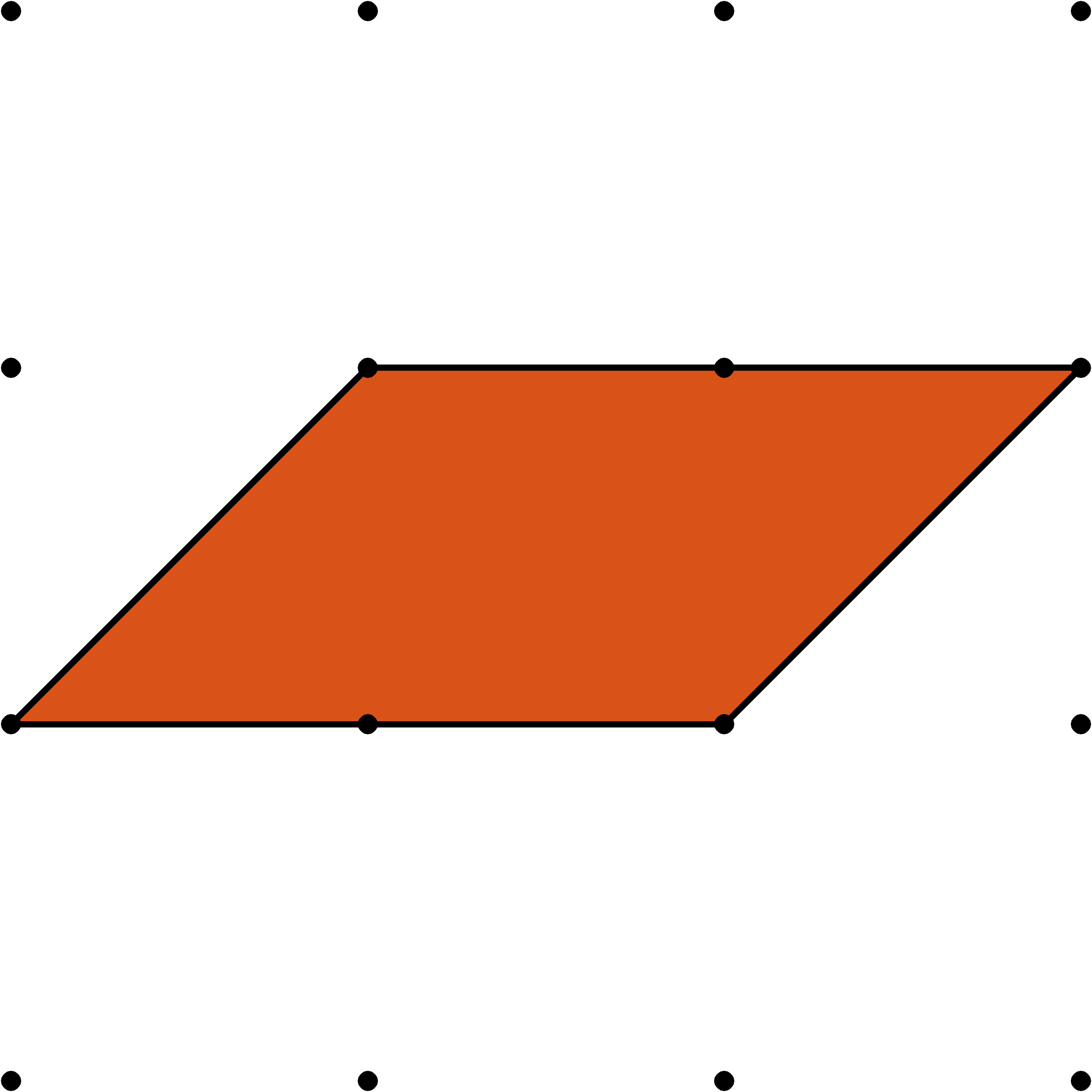}\\\vspace{2em}
			$|Z\chi_P|$\\
			\includegraphics[width=\linewidth]{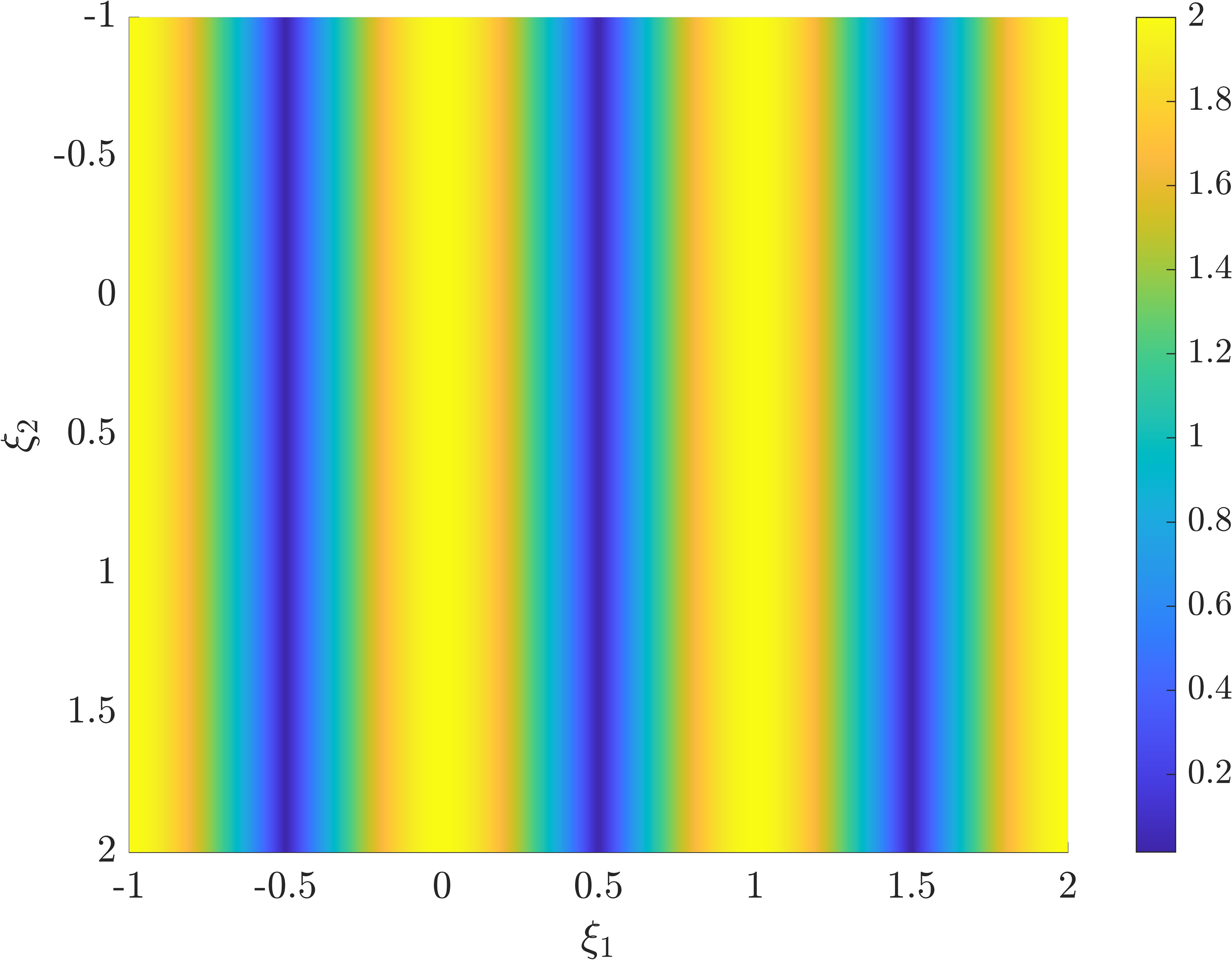} 
			\label{fig:parallelogram}
		\end{subfigure}\hfill 
		\begin{subfigure}{0.3\textwidth}
			\centering
			\caption{L-shape $L$ ($k=3$)}
			\includegraphics[width=.8\linewidth]{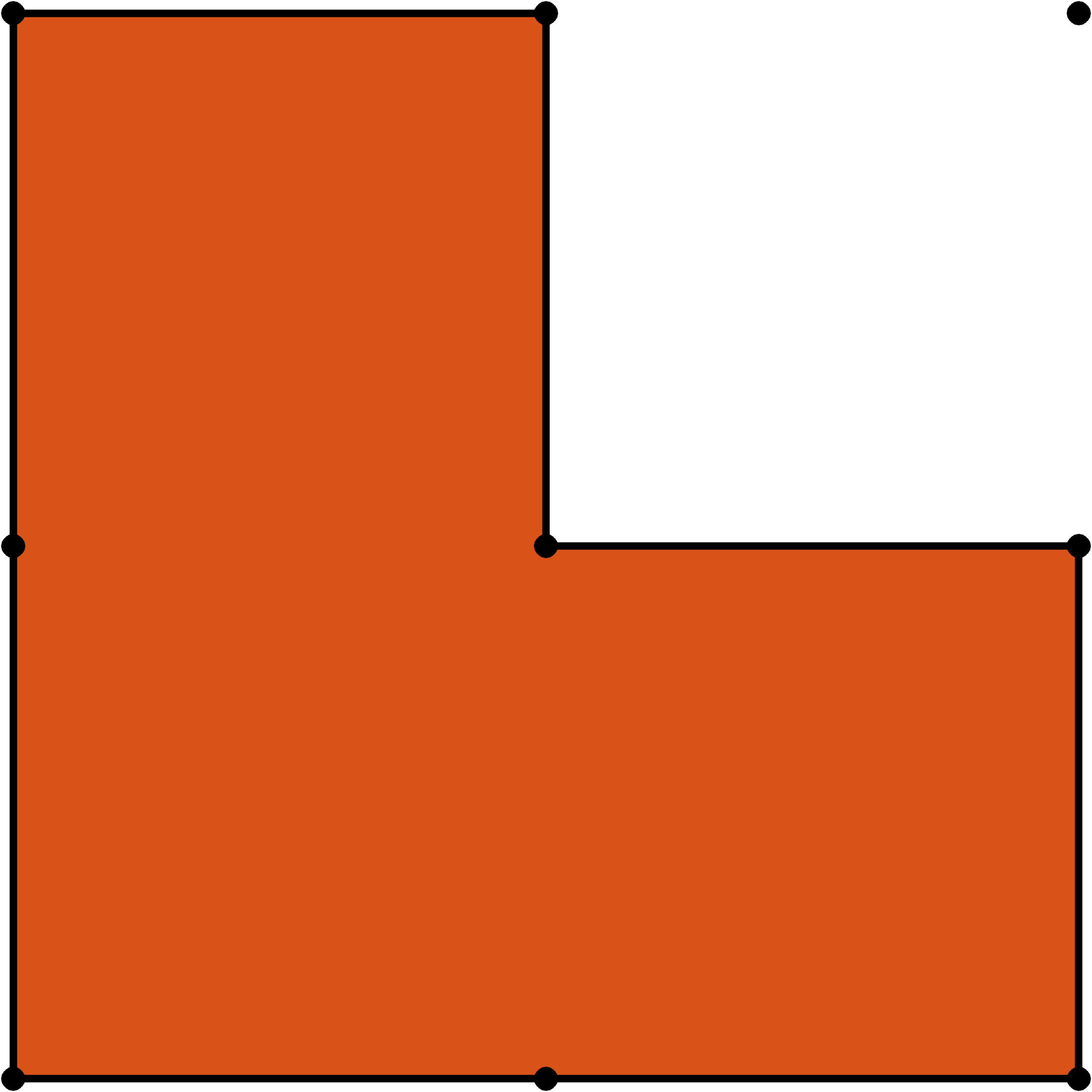}\\\vspace{2em}
			$|Z\chi_L|$\\
			\includegraphics[width=\linewidth]{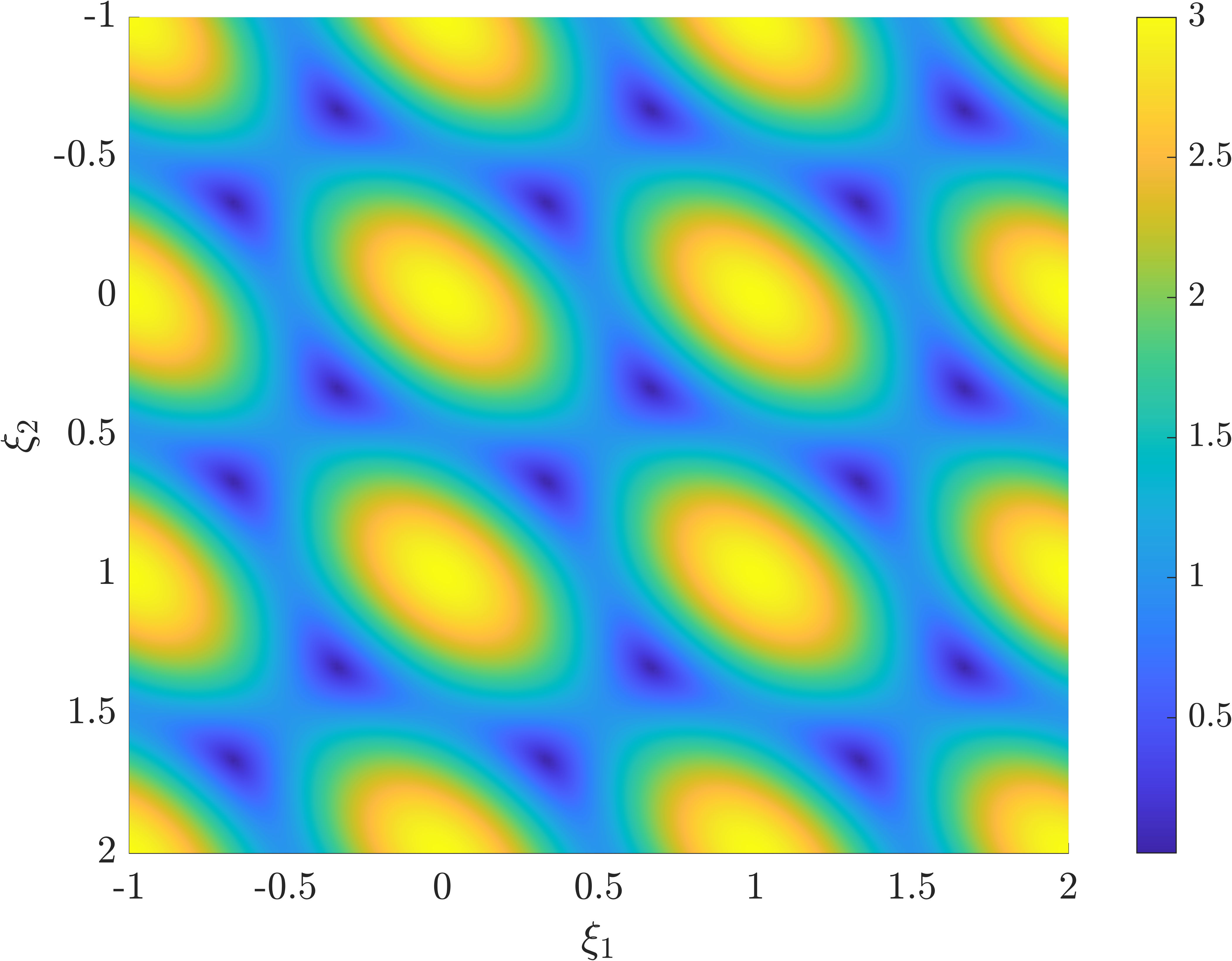}
			\label{fig:lshape}
		\end{subfigure}\hfill 
		\begin{subfigure}{0.3\textwidth}
			\centering
			\caption{Octagon $O$ ($k=14$)}
			\includegraphics[width=.8\linewidth]{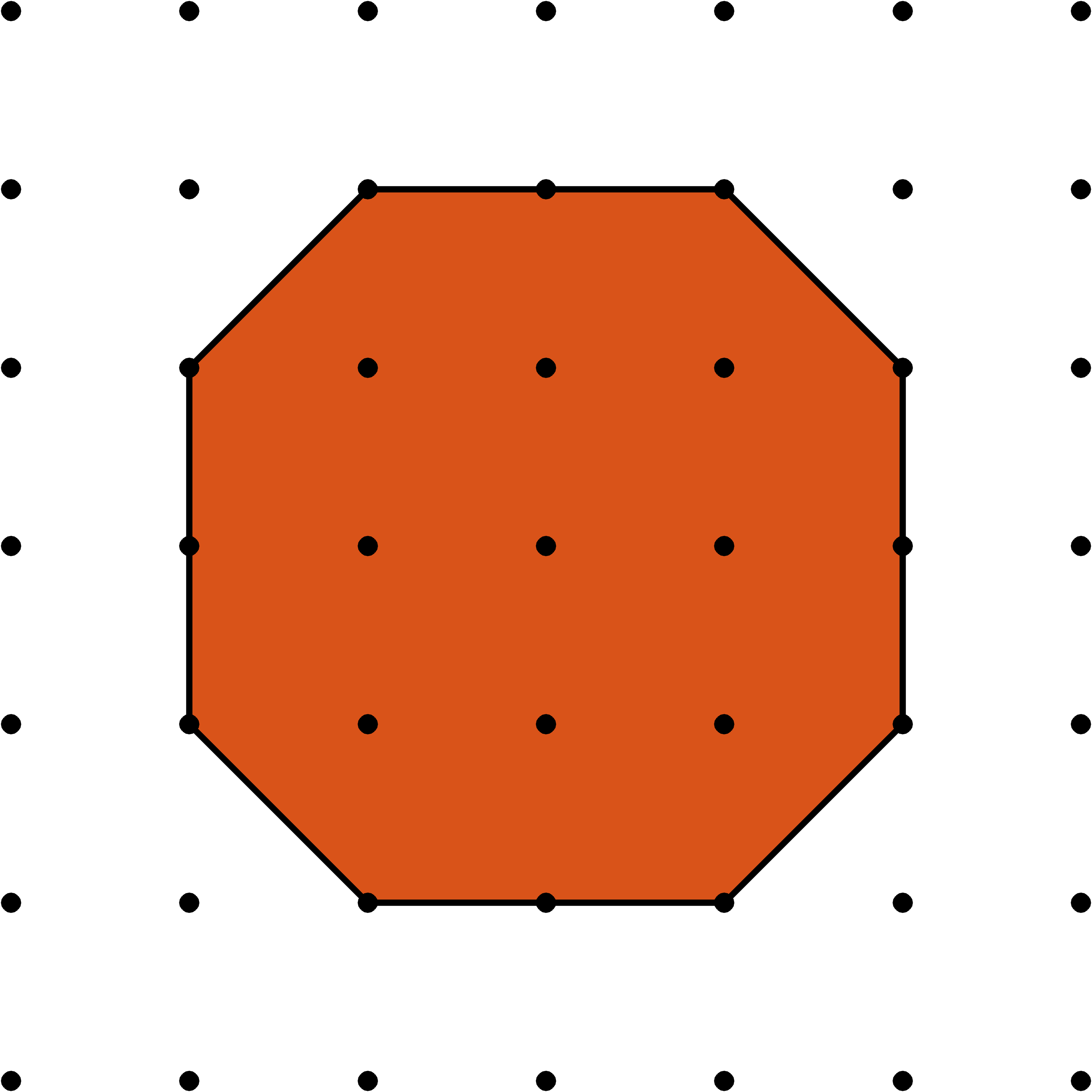}\\\vspace{2em}
			$|Z\chi_O|$\\
			\includegraphics[width=\linewidth]{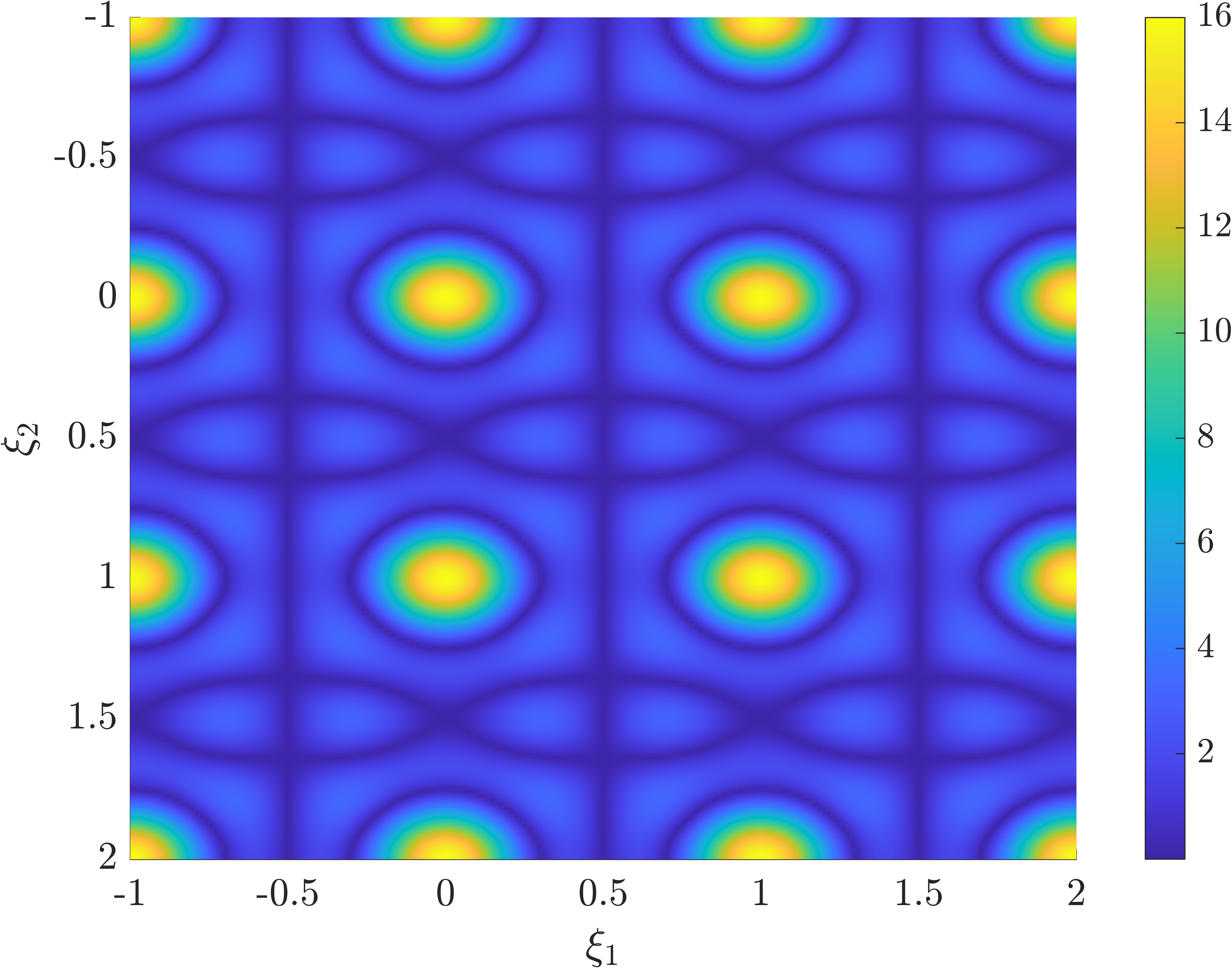}
			\label{fig:octagon}
		\end{subfigure}
		
		\caption{  Top: Multi-tilings of $\RR^2$ at level $k$ by $\Lambda = \ZZ^2$ that are not 1-tilings .  Bottom: the magnitude of the Zak transform of the characteristic function for these domains.  Here $x$ is fixed in $\RR^2.$}
	\end{figure}
	\begin{example}[Multi-tiling domains that do not tile]

		Consider the parallelogram $P$ shown in Figure \ref{fig:parallelogram} with bottom left vertex located at the origin. Then, for $x$ in the triangle $T$ with vertices $(0,0), (1,0), (1,1)$, the Zak transform of the characteristic function is given by $Z\chi_P(x,\xi) =  1+ e^{-2\pi i \xi_1} $ has zeros in $\xi_1=2^{-1}\ZZ$. For the set $L$ shown in plot in Figure \ref{fig:lshape}, is seen that for $x\in [0,1]^2$ the Zak transform $Z\chi_L(x, \xi)$ is zero for $\xi=(\xi_1, \xi_2)$ with  $\xi_1 -\xi_2 \in  \Bbb Z+1/3$. For the octagon $O$ shown Figure \ref{fig:octagon} it can be shown through direct computation that the Zak transform $Z\chi_O$ is zero  at any  $(\xi_1, \xi_2)$ for  $\xi_1=n+1/2$, and points   $(1/6,1/2)$, $(5/6, 1/2)$.

	\end{example}

	In the last two examples, we compute the magnitude of the Zak transform $Z g$ for functions $g\in L^2(\RR)$ that are not characteristic functions of tiling domains.

	\begin{example}[Intervals with an irrational gap]

		Let $\Omega=(0,1/2) \cup (a, a+1/2)$ where $a>1/2$ is an irrational number. By Laba's result  \cite{laba2001fuglede},   $\Omega$  is not a tiling set  and $L^2(\Omega)$ does not admit an  exponential orthogonal basis. Assume that $n_0\in \Bbb Z$ such that $n_0<a<n_0+1/2$. (One can also assume that $n_0+1/2<a<n_0$ for some other $n_0$, as the argument will be similar.) 
		Consider the Zak transform  $Z: L^2(\Bbb R)\to L^2([0,1]^2)$ 
		$
		Z\chi_\Omega(x,\xi) = \sum_{n\in \Bbb Z} \chi_\Omega(x-n) e^{2\pi i \xi\cdot n}.
		$
		Let $\Delta = (a-n_0+1/2,1)$. It is readily seen that 
		$Z\chi_\Omega(x,\xi)=0$ for any $(x,\xi)\in \Delta\times [0,1]$. The zeros of $|Z\chi_\Omega|$   for $a=\sqrt{2}$ are plotted in Figure \ref{fig:notilinga}. 
		
	\end{example}

	\begin{figure}[h!]
		
		\centering
		\footnote{plot characteristic function for (a) that is like (b)}
		\begin{subfigure}{0.35\textwidth}
			\centering
			\caption{Intervals with irrational gap\\ $\Omega={[0,1/2]+\{0,\sqrt{2}\}}$}
			\parbox[position][.8\linewidth][c]{.8\linewidth}{
				\includegraphics[width=.8\linewidth]{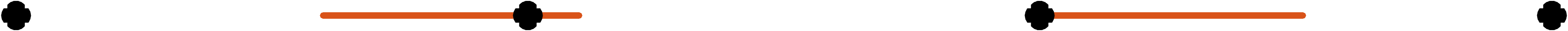}}\\
			$|Z\chi_\Omega|$\\
			\includegraphics[width=\linewidth]{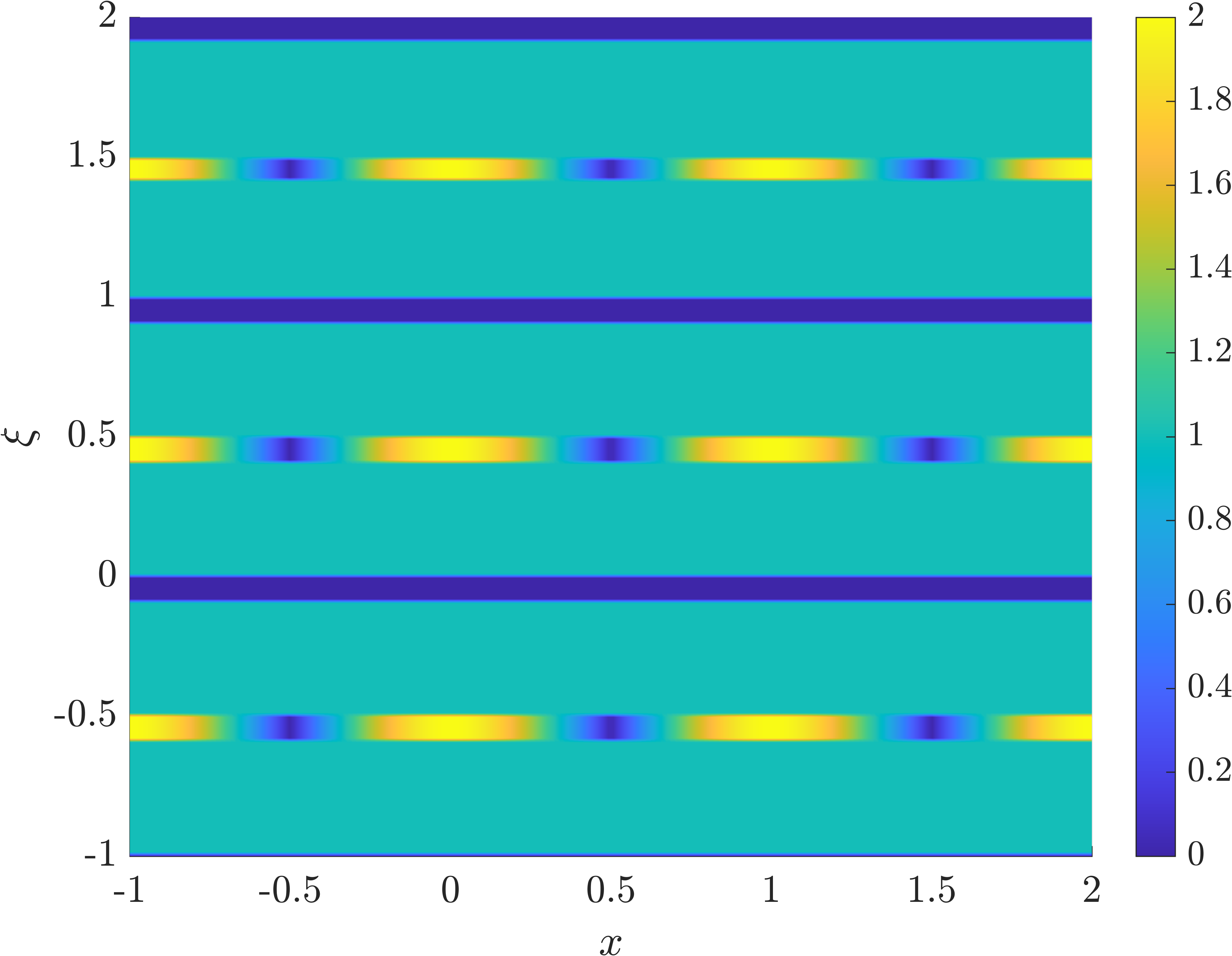} 
			\label{fig:notilinga}
		\end{subfigure}\hspace{1cm}
		\begin{subfigure}{0.35\textwidth}
			\centering
			\caption{Gaussian window function $f(x)=e^{-\pi x ^2}$}
			\parbox[position][.8\linewidth][c]{.8\linewidth}{\centering
				\includegraphics[width=.8\linewidth]{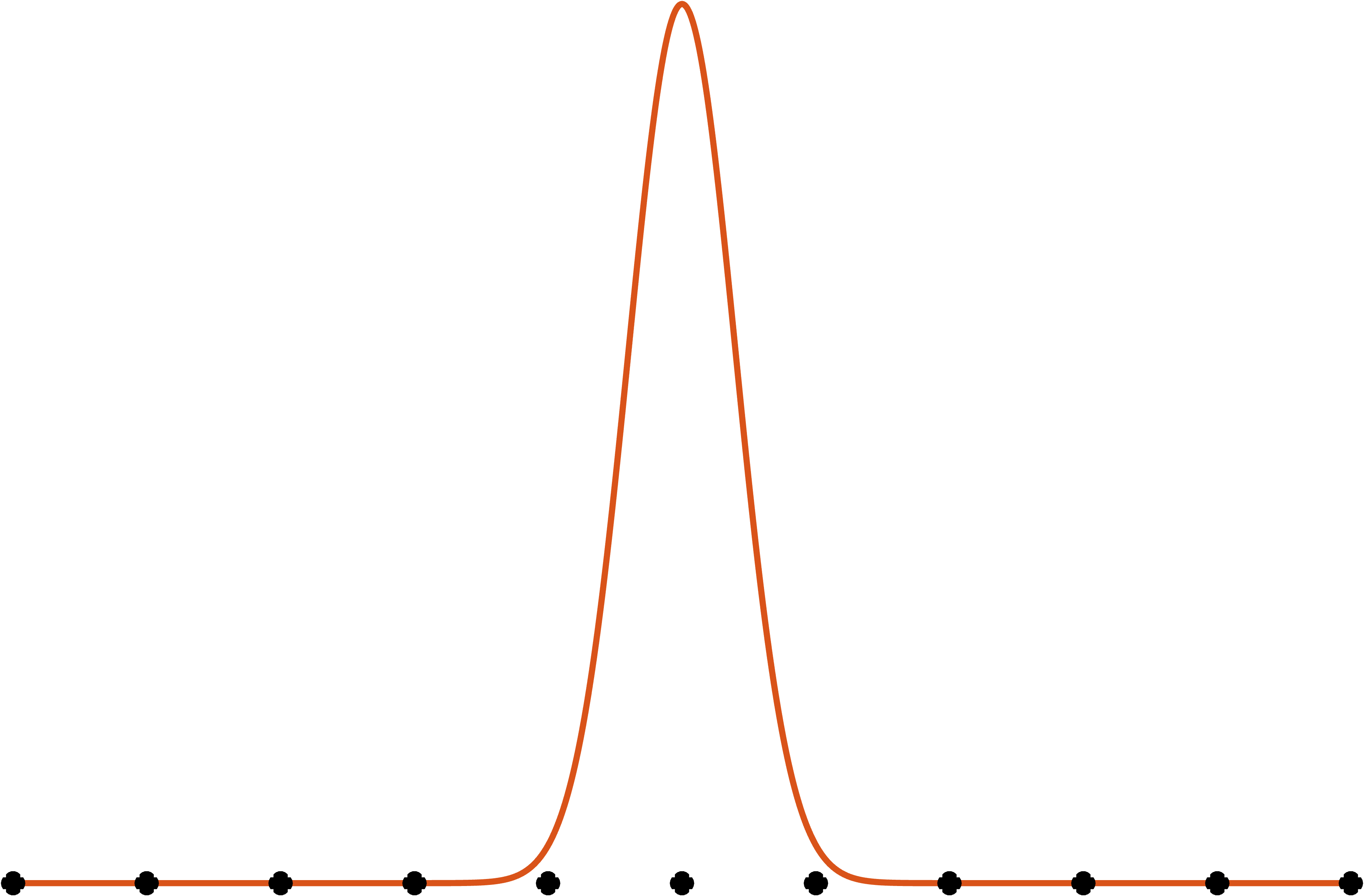}}\\
			$|Zf|$\\
			\includegraphics[width=\linewidth]{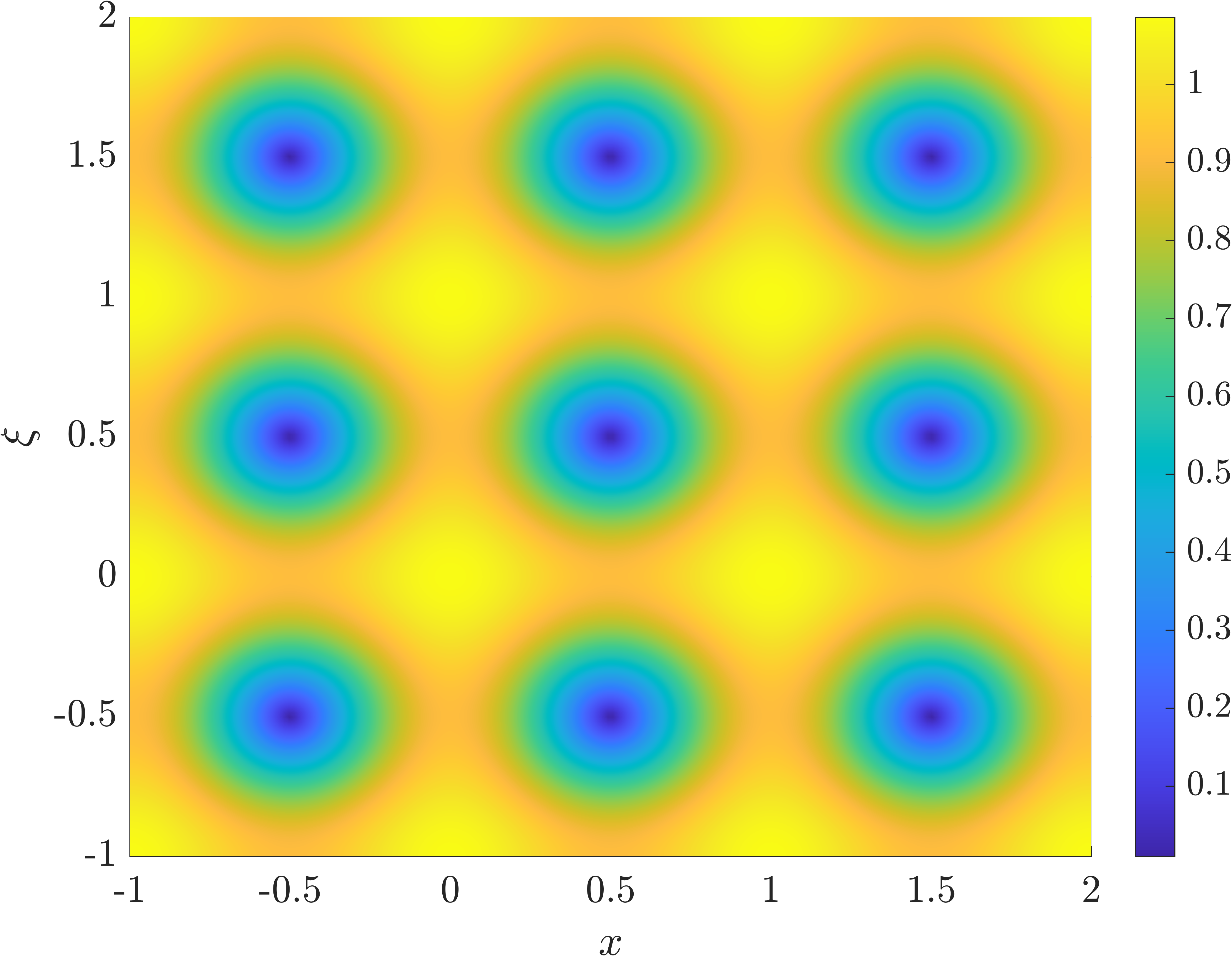} 
			\label{fig:gaussian}
		\end{subfigure}
		\caption{\small (a) (top) $\Omega={[0,\frac{1}{2}]+\{0, 2\}}$ is the union of two fundamental domains of $\frac{1}{2}\ZZ$ that does not form a tiling of $\RR$ with respect to any countable set, and (bottom) the Zak transform of  $f(x)=\chi_{\Omega}$, and (b) (top) a Gaussian function $f(x)=e^{-\pi x^2}$ and (bottom) its Zak transform. }\label{fig:other}
	\end{figure}

	\section{Open problems}

	\begin{problem}
		Our characterization   in Theorem \ref{Tiling-Riesz-Riesz} shows that the statement that $\mathcal G(\chi_{\Omega}, \Lambda\times \Gamma)$ forms a Riesz Gabor basis for any time-frequency shift domain $\Lambda\times \Gamma$ is equivalent to the statement that $\mathcal E(\Gamma)$ forms a Riesz basis for  $L^2(\Omega)$, provided that   $\Omega$ tiles $\Bbb R^d$ by $\Lambda$.  This result   holds under a prior assumption on the structure of $\Omega$. We also observed in Theorem \ref{multi-tiling-and-completness} that for certain lattices,  if $\Omega$ multi-tiles $\RR^d$ by $\Lambda$, for
		$\mathcal G(\chi_{\Omega}, \Lambda\times \Gamma)$   to form a Gabor Riesz basis, it is  
		necessary that   $\Omega$  tiles $\Bbb R^d$ by $\Lambda$. 
		Can we  improve these results and find a necessary tiling condition for any separable Riesz time-frequency domain  $\Lambda\times \Gamma$ without any prior geometric assumptions on the set $\Omega$?   
	\end{problem}
	
	The study of the following problem has been motivated by our observation in Example \ref{Ex:1}. 
	\begin{problem}    Let $\Omega$ be a multi-tiling set with respect to a lattice $\Lambda$ at level $k>1$. Is it possible to find a countable set $\Gamma$ (not necessarily a  lattice) such that $\mathcal G(\chi_{\Omega},   \Lambda\times \Gamma)$ is a Riesz Basis for $L^2(\Bbb R^d)$? 
	\end{problem}
	
	
	\begin{problem} There are domains $\Omega$ for which   $\chi_\Omega$ can  never serve as a window function (or generator) for any Gabor orthogonal basis. 
		For example, it is known that   if $\Omega$ is a
		convex body with a smooth boundary with Gaussian curvature that does not vanish anywhere, then   $\chi_\Omega$ does not serve as the window function for any orthogonal Gabor basis, provided that $d\neq 1 \mod 4$  \cite{iosevich2018gabor}. A challenging problem is, Can the characteristic function of a ball can serve as a window function for a Gabor Riesz basis when $d\neq 1 \mod 4$?
	\end{problem} 
	
	\section{Appendix}\label{sec:code}
	
	\begin{lstlisting}[caption = {MATLAB code for an approximation of $Zf(x,\xi)$ using a truncated sum.},style=Matlab-editor,
		basicstyle=\ttfamily,
		escapechar=,   tabsize=1]
		
		function Z_f = ZakTransform(f,x,xi, N_1, N_2)
		% ZakTransform returns an approximation of the Zak 
		% transform using a truncated sum
		
		%   Inputs
		%       f: function handle that takes 2x1 vector input 
		%       x and xi : 2x1 vectors
		%       N_1, N_2 are integer bounds on the summation index 
		
		%   Output:
		%       Z_f = \sum_{n\in [N_1 N_2]^2} f(x+n)e^{2*pi*i*xi*\cdot n}
		
		[n1, n2] = meshgrid(N_1:N_2);
		
		n1 = n1(:)'; n2=n2(:)';
		f_n = f([x+ [n1; n2]]').*exp(2*pi*1i*xi'*[n1; n2])';
		Z_f = sum(f_n);
		
		end
		
		
	\end{lstlisting}
	
	\begin{lstlisting}[caption = {MATLAB code for plotting $|Zf(x,\xi)|$ when $f=\chi_\Omega$.},style=Matlab-editor,
		basicstyle=\ttfamily,
		escapechar=,  tabsize=2]
		
		%Plots the magnitude of the Zak Transform corresponding to the characteristic function of a specified shape
		
		% Define and plot the domain Omega
		Omega = polyshape(-1.5+[-.5 .5 2.5 3.5 3.5 2.5 .5 -.5 ], .5+[.5 1.5 1.5 .5 -1.5 -2.5 -2.5 -1.5]);
		plot(Omega)
		hold all;
		axis equal;
		[x,y] = meshgrid([-2:1:2]);
		plot(x,y,'ko','MarkerFaceColor','k');
		set(gca, 'Xtick',[-2:2], 'Ytick',[-2:2]);
		
		% let f be the indicator function of Omega
		f = @(x) isinterior(Omega,x);
		
		% Query for the zak transform at x=[1/4; 2]
		NN = 200;
		[X, Y]  = meshgrid(linspace(-1,2,NN));
		Xi = [X(:)'; Y(:)'];
		x = [1/4; 2];
		for j = 1:size(x,2)
			for k=1:length(Xi)
				Z(j,k) = ZakTransform(f, x, Xi(:,k), -5,5);
			end
		end
		
		%% Plot |Zf|
		figure;
		Z = reshape(Z, [NN NN]);
		surf(X, Y, abs(Z),'linestyle', 'none'); view(0,-90); colorbar;
		xlabel('$\xi_1$', 'interpreter', 'latex')
		ylabel('$\xi_2$', 'interpreter', 'latex')
		axis tight
		
		
	\end{lstlisting}
	\bibliographystyle{plain}
	\bibliography{references}

\end{document}